\newtheorem{thm}{Theorem}[section]
\newtheorem*{thm*}{Theorem}
\newtheorem{cor}[thm]{Corollary}
\newtheorem{lem}[thm]{Lemma}
\newtheorem{prop}[thm]{Proposition}
\newtheorem*{prop*}{Proposition}
\newtheorem*{conj*}{Conjecture}
\newtheorem{defn}[thm]{Definition}
\newtheorem*{dfn*}{Definition}
\theoremstyle{definition}
\newtheorem{rem}[thm]{\textbf{Remark}}
\newtheorem*{rmk*}{Remark}
\newtheorem*{fact*}{Fact}
\theoremstyle{proof}
\newcommand{\norm}[1]{\left\Vert#1\right\Vert}
\newcommand{\abs}[1]{\left\vert#1\right\vert}
\newcommand{\set}[1]{\left\{#1\right\}}
\newcommand{\brac}[1]{\left(#1\right)}
\newcommand{\Real}{\mathbb{R}}
\newcommand{\Natural}{\mathbb{N}}
\newcommand{\C}{\mathcal{C}}
\newcommand{\F}{\mathcal{F}}
\newcommand{\eps}{\epsilon}
\numberwithin{equation}{section}
\begin{document}

\title{Complemented Brunn--Minkowski Inequalities and Isoperimetry for Homogeneous and Non-Homogeneous Measures}

\author{Emanuel Milman\textsuperscript{1} and Liran Rotem\textsuperscript{2}}
\date{}

\footnotetext[1]{Department of Mathematics, Technion - Israel
Institute of Technology, Haifa 32000, Israel. Supported by ISF (grant no. 900/10), BSF (grant no. 2010288), Marie-Curie Actions (grant no. PCIG10-GA-2011-304066) and the E. and J. Bishop Research Fund. Email: emilman@tx.technion.ac.il.}

\footnotetext[2]{Department of Mathematics, Tel-Aviv University. Supported by ISF (grant no. 826/13), BSF (grant no. 2012111) and by the Adams Fellowship Program of the Israel
Academy of Sciences and Humanities. Email: liranro1@post.tau.ac.il.}

\maketitle

\begin{abstract}
Elementary proofs of sharp isoperimetric inequalities on a normed space $(\Real^n,\norm{\cdot})$ equipped with a measure $\mu = w(x) dx$ so that $w^p$ is homogeneous are provided, along with a characterization of the corresponding equality cases. When $p \in (0,\infty]$ and in addition $w^p$ is assumed concave, the result is an immediate corollary of the Borell--Brascamp--Lieb extension of the classical Brunn--Minkowski inequality, providing a new elementary proof of a recent result of Cabr\'e--Ros Oton--Serra. When $p \in (-1/n,0)$, the relevant property turns out to be a novel ``$q$--complemented Brunn--Minkowski" inequality:
\begin{multline*} \forall \lambda \in (0,1) \;\;\;  \forall \text{ Borel sets $A,B \subset \Real^n$ such that } \mu(\Real^n \setminus A),\mu(\Real^n \setminus B) <\infty ~, \\
\mu_*(\Real^n \setminus (\lambda A + (1-\lambda) B)) \leq \brac{\lambda \mu(\Real^n \setminus A)^{q} + (1-\lambda) \mu(\Real^n \setminus B)^{q}}^{1/q} ~,
\end{multline*}
which we show is always satisfied by $\mu$ when $w^p$ is homogeneous with $\frac{1}{q} = \frac{1}{p} + n$; in particular, this is satisfied by the Lebesgue measure with $q = 1/n$. This gives rise to a new class of measures, which are ``complemented" analogues of the class of convex measures introduced by Borell, but which have vastly different properties. The resulting isoperimetric inequality and characterization of isoperimetric minimizers extends beyond the recent results of Ca\~{n}ete--Rosales and Howe. The isoperimetric and Brunn-Minkowski type inequalities also extend to the non-homogeneous setting, under a certain log-convexity assumption on the density. Finally, we obtain functional, Sobolev and Nash-type versions of the studied inequalities. 
\end{abstract}

\section{Introduction}

The well-known intimate relation between the classical isoperimetric inequality on Euclidean space on one hand, and the Brunn--Minkowski inequality for the volume of algebraic sums of sets on the other, dates back to the 19th century (see e.g. \cite{GardnerSurveyInBAMS,Schneider-Book} and the references therein). The contribution we wish to put forth in this work is that more general isoperimetric inequalities are equally intimately related to more general Brunn--Minkowski type inequalities, some of which have already been discovered, notably by Pr\'ekopa, Leindler, Borell, Brascamp and Lieb, and some of which seem new and previously unnoticed. It is the main goal of this work to put these ``complemented Brunn--Minkowski" inequalities into light, along with their associated class of ``complemented-concave" measures, which are the counterparts to the class of convex measures introduced by Borell in \cite{BorellConvexMeasures} (cf.  Brascamp--Lieb \cite{BrascampLiebPLandLambda1}). We begin with some definitions and background. 

Let $(\Omega,d)$ denote a separable metric space, and let $\mu$ denote a Borel measure on $(\Omega,d)$. 
The outer and inner Minkowski boundary measures $\mu^+_d(A)$ and $\mu^{-}_d(A)$ (respectively) of a Borel set $A \subset \Omega$ are defined as:
\[
\mu^+_d(A) := \liminf_{\eps \to 0} \frac{\mu(A^d_{\eps} \setminus A)}{\eps} ~, ~ \mu^-_d(A) := \mu^+_d(\Omega \setminus A) = \liminf_{\eps \to 0} \frac{\mu((\Omega \setminus A)^d_\eps \setminus (\Omega \setminus A))}{\eps} ~,
\]
where $A_\eps^d := \set{x \in \Omega ; \exists y \in A \;\; d(x,y) < \eps}$ is the open $\eps$-extension of $A$.  The isoperimetric problem on $(\Omega,d,\mu)$ consists of finding a sharp lower bound on $\mu^+_d(A)$ or $\mu^{-}_d(A)$ as a function of $\mu(A)$. Sets of given measure on which this lower bound is attained are called isoperimetric minimizers. 

On a linear space, Minkowski addition is defined as $A + B := \set{a + b \; ; \; a \in A ~,~ b \in B}$, and homothetic scaling is denoted by $t A = \set{t  a  \; ;\;  a \in A}$, $t \in \Real$. When the metric $d$ is given by a norm $\norm{\cdot}_K$ having open unit-ball $K$, note that $A^d_\eps = A + \eps K$; this connects the study of volumetric properties of Minkowski addition to the isoperimetric problem in the limiting regime when $\eps \rightarrow 0$. 
Consequently, we may extend our setup beyond the traditional case when $d$ and $\norm{\cdot}$ are genuine metrics and norms, respectively. For instance, throughout this work we dispense with the symmetry requirement in the terms ``metric", ``norm" and ``semi-norm". In other words, we do not require that a metric $d$ satisfy $d(x,y) = d(y,x)$, and only require that our norms and semi-norms be \emph{positively} homogeneous: $\norm{\lambda x} = \lambda \norm{x}$ for all $\lambda \geq 0$. Furthermore, we may consider the isoperimetric problem when $d$ is given by a semi-norm, or more generally, when it is induced by an arbitrary open set $B$ containing the origin, by defining $A_{\eps}^d := A + \eps B$ and thus accordingly the outer and inner boundary measures, which we then denote by $\mu^+_B$ and $\mu^-_B$, respectively.

\medskip

The starting point of our investigation in this work is the following:

\begin{thm}[Borell \cite{BorellConvexMeasures}, Brascamp--Lieb \cite{BrascampLiebPLandLambda1}] \label{thm:intro-BBL}
Let $p \in [-1/n,\infty]$ and $q \in [-\infty,1/n]$ satisfy:
\begin{equation} \label{eq:intro-pq}
 \frac{1}{q} = \frac{1}{p} + n ~.
\end{equation}
Let $w : \Real^n \rightarrow \Real_+$ denote  a measurable function which is $p$-concave, namely:
\begin{multline*}
\forall x,y \in \Real^n \;\;\;  w(x) w(y) > 0 \;\;\;  \forall \lambda \in (0,1) \;\;\; \Rightarrow \\
w(\lambda x + (1-\lambda) y) \geq \brac{\lambda w(x)^p + (1-\lambda) w(y)^p}^{1/p}  ~.
\end{multline*}
Then the measure $\mu = w(x) dx$ is $q$-concave, namely:
\begin{multline*} \forall A,B \subset \Real^n \;\;\; \mu(A) \mu(B) > 0 \;\;\;  \forall \lambda \in (0,1) \;\;\; \Rightarrow \\ 
\mu(\lambda A + (1-\lambda) B) \geq \brac{\lambda \mu(A)^q + (1-\lambda) \mu(B)^q}^{1/q} ~.
\end{multline*}
\end{thm}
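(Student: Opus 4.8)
The plan is to obtain Theorem~\ref{thm:intro-BBL} from the functional Borell--Brascamp--Lieb inequality (the Pr\'ekopa--Leindler inequality being the case $p=0$), proving the latter by induction on the dimension~$n$. Write $M_r^{(\lambda)}(a,b)$ for the $(\lambda,1-\lambda)$-weighted power mean of $a,b\geq 0$, with $M_0^{(\lambda)}$ the weighted geometric mean, $M_{-\infty}^{(\lambda)}=\min$, and the convention $M_r^{(\lambda)}(a,b)=0$ whenever $ab=0$ and $r<\infty$. The statement to be proved is: \emph{if $f,g,h\colon\Real^n\to\Real_+$ are measurable, $\lambda\in(0,1)$, $p\in[-1/n,\infty)$, $1/q=1/p+n$, and $h(\lambda x+(1-\lambda)y)\geq M_p^{(\lambda)}(f(x),g(y))$ for all $x,y\in\Real^n$, then $\int_{\Real^n}h\geq M_q^{(\lambda)}\brac{\int f,\int g}$.} Granting this, Theorem~\ref{thm:intro-BBL} follows: by inner regularity one may take $A,B$ compact with $\mu(A)\mu(B)>0$, so that $C:=\lambda A+(1-\lambda)B$ is compact, and apply the functional inequality to $f=w\ind{A}$, $g=w\ind{B}$, $h=w\ind{C}$. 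When $x\in A$, $y\in B$ and $w(x)w(y)>0$, the hypothesis is exactly the $p$-concavity of $w$ at $x,y$ (note $\lambda x+(1-\lambda)y\in C$); otherwise $f(x)g(y)=0$ and the right-hand side vanishes. The conclusion reads $\mu(C)\geq M_q^{(\lambda)}(\mu(A),\mu(B))$, i.e.\ the $q$-concavity inequality for $A,B$. The remaining endpoint $p=\infty$ is special: such a $w$ agrees a.e.\ with a positive multiple of the indicator of a convex set $K$, and there the claim is the classical Brunn--Minkowski inequality applied to $A\cap K$ and $B\cap K$.

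The base case $n=1$ rests on the elementary one-dimensional Brunn--Minkowski inequality $\abs{\lambda C+(1-\lambda)D}\geq\lambda\abs{C}+(1-\lambda)\abs{D}$ for nonempty $C,D\subseteq\Real$. Truncating $f,g$ and passing to the limit, one may assume $f,g$ are bounded with $\int f$, $\int g$, $\norm{f}_\infty$, $\norm{g}_\infty$ all in $(0,\infty)$ (the other cases being immediate from the conventions). For each $\theta\in(0,1)$ the superlevel sets $\set{f\geq\theta\norm{f}_\infty}$ and $\set{g\geq\theta\norm{g}_\infty}$ are nonempty, and since $M_p^{(\lambda)}$ is increasing in each variable and $1$-homogeneous, $\set{h\geq\theta m}\supseteq\lambda\set{f\geq\theta\norm{f}_\infty}+(1-\lambda)\set{g\geq\theta\norm{g}_\infty}$, where $m:=M_p^{(\lambda)}(\norm{f}_\infty,\norm{g}_\infty)$. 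Integrating the one-dimensional Brunn--Minkowski inequality over $\theta\in(0,1)$, and using $\int_0^1\abs{\set{f\geq\theta\norm{f}_\infty}}\,d\theta=\int f/\norm{f}_\infty$ (its analogue for $g$, and $\int h\geq m\int_0^1\abs{\set{h\geq\theta m}}\,d\theta$), yields $\int h\geq m\brac{\lambda\tfrac{\int f}{\norm{f}_\infty}+(1-\lambda)\tfrac{\int g}{\norm{g}_\infty}}$, which is at least $M_q^{(\lambda)}(\int f,\int g)$ by the elementary inequality
\[
M_p^{(\lambda)}(a,b)\brac{\lambda\tfrac{F}{a}+(1-\lambda)\tfrac{G}{b}}\ \geq\ M_q^{(\lambda)}(F,G)\qquad\brac{a,b,F,G>0,\ \tfrac{1}{q}=\tfrac{1}{p}+1},
\]
whose left-hand side is $0$-homogeneous in $(a,b)$, so that it reduces to a one-variable minimization.

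For the inductive step write $\Real^n=\Real^{n-1}\times\Real$, $x=(x',s)$. For fixed $s,t\in\Real$ with $r:=\lambda s+(1-\lambda)t$, the slices $f(\cdot,s)$, $g(\cdot,t)$, $h(\cdot,r)$ satisfy the $(n-1)$-dimensional hypothesis with exponent $p$ (since $[-1/n,\infty)\subseteq[-1/(n-1),\infty)$), so the inductive hypothesis---with $1/q_1=1/p+(n-1)$, for which one checks $q_1\geq -1$ from $p\geq -1/n$---gives $H(r)\geq M_{q_1}^{(\lambda)}(F(s),G(t))$, where $F(s):=\int_{\Real^{n-1}}f(\cdot,s)$ and $G,H$ are the analogous fibre integrals of $g,h$, all measurable by Tonelli. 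Thus $H,F,G$ satisfy the one-dimensional hypothesis with exponent $q_1$; applying the case $n=1$, using $1/q_1+1=1/p+n=1/q$, and invoking Fubini gives $\int_{\Real^n}h=\int_\Real H\geq M_q^{(\lambda)}\brac{\int_\Real F,\int_\Real G}=M_q^{(\lambda)}\brac{\int f,\int g}$.

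The crux---and the sole reason the exponent is forced down from $p$ to $q=p/(1+np)$---will be the one-variable inequality for weighted power means appearing in the base case; establishing it over the whole admissible range of exponents, together with its equality cases if one also wants the characterization of extremizers, is the step I expect to require genuine care. Everything else will be routine housekeeping: the truncation in the base case, and the appeals to Tonelli and Fubini for the fibre integrals.
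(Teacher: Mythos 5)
The paper does not prove this theorem at all --- it is imported verbatim from Borell and Brascamp--Lieb as the starting point of the whole investigation (it reappears as Theorem \ref{thm:BBL}, again only with a citation). So your proposal is necessarily a ``different route'': you supply the standard proof, namely the functional Borell--Brascamp--Lieb inequality established by induction on dimension via Fubini, with the base case done by the layer-cake/one-dimensional Brunn--Minkowski argument, and the measure statement then recovered by testing on $f=w\ind{A}$, $g=w\ind{B}$, $h=w\ind{C}$. This is sound, and your bookkeeping of exponents is right: $q_1\geq -1$ follows from $p\geq -1/n$, and $1/q_1+1=1/p+n$. Two small points of care beyond what you already flag. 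First, the two-point inequality you defer is, for $p>0$, exactly H\"older's inequality with conjugate exponents $p/q$ and $1/q$ (since $q/p+q=1$), and for $p\in[-1,0)$ the corresponding reverse H\"older inequality; so while it is indeed the crux, it is entirely standard and your $0$-homogeneity reduction does work. Second, mind the convention at the endpoint $p=-1/n$ (equivalently $p=-1$ in the base case): there $1/q=0$ must be read as $q=-\infty$, i.e.\ the conclusion is $\int h\geq\min(\int f,\int g)$, consistent with the theorem's stated range $q\in[-\infty,1/n]$; your key lemma does survive this endpoint since $M_1(F/a,G/b)\geq\min(F,G)\,M_{-1}(a,b)^{-1}$. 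Your separate treatment of $p=\infty$ via the classification of $\infty$-concave densities as (a.e.) constant multiples of indicators of convex sets, reducing to classical Brunn--Minkowski, is also correct. What the paper's black-box approach ``buys'' is simply brevity; what your proof buys is self-containedness and, if you push the equality analysis through the two-point lemma and the one-dimensional case, access to Dubuc's characterization of extremizers (Theorem \ref{thm:Dubuc}), which the paper also imports without proof.
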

Theorem \ref{thm:intro-BBL} is a generalization of the classical Brunn--Minkowski ($p=\infty$ and $w$ constant) and Prekop\'a--Leindler ($p=0$) Theorems, where in the above $\ell_p$ and $\ell_q$ averages are interpreted appropriately - see Section \ref{sec:Positive-P}. 
When $p> 0$ then $w$ is $p$-concave iff $w^p$ is concave on its convex support; the case $p=\infty$ corresponds to a constant density supported on a convex set;  $w$ is $0$-concave iff $\log w : \Real^n \rightarrow \Real \cup \set{-\infty}$ is concave ; and when $p<0$ then $w$ is $p$-concave iff $w^p : \Real^n \rightarrow \Real \cup \set{+\infty}$ is convex.  For further information, properties and generalizations of these well-studied densities and measures, which Borell dubs ``convex measures", we refer to e.g. \cite{BorellConvexMeasures,Borell-logconcave,BrascampLiebPLandLambda1,BobkovLedoux,BobkovConvexHeavyTailedMeasures} and to Section \ref{sec:Positive-P}. Throughout this work, we will assume that $p$ and $q$ are related by (\ref{eq:intro-pq}). 

\subsection{Homogeneous Densities - Positive Exponent}

Our first elementary observation is that when the density $w$  is in addition $p$-homogeneous, namely:
\[
w(\lambda x)^p = \lambda w(x)^p \;\;\; \forall \lambda > 0 ~,
\]
Theorem \ref{thm:intro-BBL} immediately yields the following corollary (the straightforward proof is deferred to Section \ref{sec:Positive-P}):

\begin{cor} \label{cor:intro-BBL}
Let $\mu = w(x) dx$, and assume that $w : \Real^n \rightarrow \Real_+$ is $p$-concave \emph{and} $p$-homogeneous \emph{with $p\in (0,\infty]$}. 
Let $\norm{\cdot}_K$ denote an arbitrary semi-norm on $\Real^n$ with open unit ball $K$. 
Then the following isoperimetric inequality holds:
\begin{equation} \label{eq:intro-isop-positive-p}
\mu^{+}_{\norm{\cdot}_K}(A) \geq \frac{1}{q} \mu(K)^q  \mu(A)^{1-q} \;\; \text{ with } \;\; \frac{1}{q} = \frac{1}{p} + n ~,
\end{equation}
for any Borel set $A$ with $\mu(A) < \infty$, and with equality for $A = t K$, $t > 0$. In other words, homothetic copies of $K$ are isoperimetric minimizers for $(\Real^n,\norm{\cdot}_K,\mu)$.
\end{cor}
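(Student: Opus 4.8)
The plan is to deduce the Corollary directly from Theorem \ref{thm:intro-BBL}, exploiting the homogeneity of $w$ in two ways: to see that $\mu$ is itself homogeneous, and to rewrite the Minkowski extension $A + \eps K$ as a dilate of a convex combination of $A$ and $K$.

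First I would record the scaling of $\mu$. Writing the $p$-homogeneity of $w$ in the form $w(tx) = t^{1/p} w(x)$ for $t > 0$ (with the convention $1/p = 0$ when $p = \infty$, i.e. $w$ is $0$-homogeneous in that case), a change of variables gives $\mu(tA) = \int_{tA} w\,dx = t^{n + 1/p}\int_A w\,dx = t^{1/q}\mu(A)$ for every Borel $A$, since $\tfrac1q = \tfrac1p + n$. Next, Theorem \ref{thm:intro-BBL} applies because $w$ is $p$-concave, so $\mu$ is $q$-concave. Two elementary remarks are worth making: since $K$ is open, $A + \eps K = \bigcup_{a \in A}(a + \eps K)$ is open, hence Borel, so no measurability subtlety arises; and since $0 \in K$ we have $A \subseteq A + \eps K$, whence $\mu(A^d_\eps \setminus A) = \mu(A + \eps K) - \mu(A)$ whenever $\mu(A) < \infty$.

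The heart of the matter is the pointwise bound
\[
\mu(A + \eps K) \;\geq\; \brac{\mu(A)^q + \eps\,\mu(K)^q}^{1/q}, \qquad \eps > 0,
\]
valid for every Borel $A$ with $0 < \mu(A) < \infty$ (when $\mu(K) = 0$ the conclusion \eqref{eq:intro-isop-positive-p} is trivial, and when $\mu(K) = \infty$ both sides of it are infinite, so we may assume $0 < \mu(K) \le \infty$). To obtain it, write $A + \eps K = (1+\eps)\brac{\lambda A + (1-\lambda) K}$ with $\lambda = \tfrac{1}{1+\eps} \in (0,1)$; then the homogeneity of $\mu$ followed by its $q$-concavity (legitimate since $\mu(A)\mu(K) > 0$) gives
\[
\mu(A + \eps K) = (1+\eps)^{1/q}\,\mu\brac{\lambda A + (1-\lambda) K} \;\geq\; \brac{(1+\eps)\lambda\,\mu(A)^q + (1+\eps)(1-\lambda)\,\mu(K)^q}^{1/q},
\]
and $(1+\eps)\lambda = 1$, $(1+\eps)(1-\lambda) = \eps$ finish the bound. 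Dividing $\mu(A + \eps K) - \mu(A)$ by $\eps$ and letting $\eps \to 0^+$, the lower bound tends to $\tfrac1q\mu(A)^{1-q}\mu(K)^q$, the derivative at $\eps = 0^+$ of $\eps \mapsto \brac{\mu(A)^q + \eps\mu(K)^q}^{1/q}$; together with $\mu^+_{\norm{\cdot}_K}(A) = \liminf_{\eps \to 0}\tfrac{1}{\eps}\brac{\mu(A+\eps K) - \mu(A)}$ this proves \eqref{eq:intro-isop-positive-p}. The remaining case $\mu(A) = 0$ is trivial, since then the right-hand side of \eqref{eq:intro-isop-positive-p} vanishes.

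For the equality case, let $A = tK$ with $t > 0$. Convexity of $K$ yields $tK + \eps K = (t + \eps)K$, so by homogeneity $\mu(tK + \eps K) = (t+\eps)^{1/q}\mu(K)$ and hence $\mu^+_{\norm{\cdot}_K}(tK) = \mu(K)\tfrac{d}{dt}t^{1/q} = \tfrac1q t^{1/q - 1}\mu(K)$; on the other hand $\mu(tK) = t^{1/q}\mu(K)$ gives $\tfrac1q\mu(K)^q\mu(tK)^{1-q} = \tfrac1q t^{1/q - 1}\mu(K)$, so the two coincide and \eqref{eq:intro-isop-positive-p} is an equality. I do not anticipate a genuine obstacle here: the one real idea is the identity $A + \eps K = (1+\eps)\brac{\lambda A + (1-\lambda)K}$ combined with the homogeneity and $q$-concavity of $\mu$; everything else is the elementary differentiation and the handling of the degenerate cases ($\mu(A) = 0$, $\mu(K) \in \set{0, \infty}$, or $K$ unbounded when the semi-norm is degenerate), each settled by inspection.
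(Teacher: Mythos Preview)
Your proof is correct and follows essentially the same approach as the paper's: the paper packages the argument as Proposition~\ref{prop:positive-p} (for $q$-concave and $q$-homogeneous measures), invoking Theorem~\ref{thm:BBL} and Lemma~\ref{lem:homogeneous} to verify these hypotheses, and then derives the key bound $\mu(A+tB)\geq(\mu(A)^q+t\mu(B)^q)^{1/q}$ by exactly the same rescaling trick $A+\eps K=(1+\eps)\bigl(\lambda A+(1-\lambda)K\bigr)$ that you use. Your treatment is slightly more explicit about the degenerate cases and measurability, but the substance is identical.
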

Note that in such a case, $\mu$ is necessarily supported in a convex cone. 
The case that $p=\infty$, namely that $\mu$ has constant density in a convex cone $\Sigma$, with $K$ being a Euclidean ball, is due to Lions and Pacella \cite{LionsPacellaIsopOnCone}, who generalized the classical isoperimetric inequality for the Lebesgue measure.  Under some additional technical assumptions on the density $w$, the case of arbitrary $p>0$ and norm $\norm{\cdot}_K$ is due to Cabr\'e, Ros-Oton and Serra \cite{CabreRosOtonSerraCRAS,CabreRosOtonSerra}, who used a PDE approach for their proof; under some stronger technical assumptions and when $K$ is the Euclidean ball, this has also been subsequently verified by Ca\~{n}ete and Rosales \cite{CaneteRosales-IsopProblemForHomogeneousDensities}.  As described in \cite{CabreRosOtonSerra}, one of the present authors has noted that the case when $p=1/N$ and $N$ is an integer may be reduced to the $p=\infty$ case. The observation that the Lions--Pacella result follows from the Brunn--Minkowski Theorem was also noted by Barthe and Cordero--Erausquin (private communication), and the generalization to the Cabr\'e-et-al setting was also noted independently of our work by Nguyen (private communication). 

Using a result of Dubuc \cite{Dubuc1977EqualityInBorell} characterizing the equality cases in Theorem \ref{thm:intro-BBL}, it is not difficult to show that up to sets of zero $\mu$-measure, homothetic (and possibly translated) copies of $K$ are the unique \emph{convex}
 isoperimetric minimizers in Corollary \ref{cor:intro-BBL}, see Section \ref{sec:Positive-P} for more details. 
The uniqueness of general 
isoperimetric minimizers in the Lions--Pacella scenario in the case that $\Sigma \setminus \set{0}$ is smooth was obtained by Lions and Pacella themselves \cite{LionsPacellaIsopOnCone}, and using a different approach by Ritor\'e and Rosales \cite{RitoreRosalesMinimizersInEulideanCones}; the smoothness assumption has been recently removed by Figalli and Indrei \cite{FigalliIndrei-IsopStabilityInCones}. In the case that $K$ is a Euclidean ball and under some additional technical smoothness assumptions on the density $w$ and the cone $\Sigma$, the uniqueness of smooth compact stable hypersurfaces in the more general setting of Corollary \ref{cor:intro-BBL} has been recently obtained by Ca\~{n}ete and Rosales \cite{CaneteRosales-StableMinimizersForHomogeneousConcaveDensities}, extending the stability results from \cite{RitoreRosalesMinimizersInEulideanCones}; however, this does not directly yield the uniqueness of minimizing hypersurfaces since a-priori these may not be smooth nor compact - these issues are to be resolved in \cite{CaneteRosales-IsopProblemForHomogeneousDensities}. Furthermore, the smoothness conditions imposed in \cite{CaneteRosales-StableMinimizersForHomogeneousConcaveDensities,CaneteRosales-IsopProblemForHomogeneousDensities} prohibit using densities $w$ which are invariant under translations in a certain direction, and consequently Ca\~{n}ete and Rosales do not obtain any translations in their characterization of minimizers, even though in general translations might be necessary (see Remark \ref{rem:translations}). 
We refer to \cite{CabreRosOtonSerraCRAS,CabreRosOtonSerra,CaneteRosales-StableMinimizersForHomogeneousConcaveDensities} for further information on the previously known results when $p \in (0,\infty]$.

\subsection{Homogeneous Densities - Negative Exponent}

In this work, we will be more interested in the case when $q<0$ (equivalently $p \in (-1/n,0)$). Observe that in this case, the right-hand-side of (\ref{eq:intro-isop-positive-p}) becomes meaningless; this is not surprising, since in that range of values, a $p$-homogeneous density $w$ is non-integrable at the origin, and so homothetic copies of $K$ must have infinite $\mu$-measure, and thus cannot serve as isoperimetric minimizers. However, it is still plausible to conjecture that their \emph{complements}, which have the same (inner) boundary measure yet finite mass, might be the sought-after minimizers. It turns out that this is indeed the case, and that moreover, the requirement that $w$ be $p$-concave is in fact not needed:

\begin{thm} \label{thm:intro-main1}
Let $\mu = w(x) dx$ where $w : \Real^n \setminus \set{0} \rightarrow \Real_+$ is a $p$-homogeneous Borel density with $p \in (-1/n,0)$. Let $\norm{\cdot}_K$ 
denote an arbitrary semi-norm on $\Real^n$ with open unit ball $K$ so that $\mu(\Real^n \setminus K) < \infty$. Then the following 
isoperimetric inequality holds:
\begin{equation} \label{eq:hom-iso}
\mu^{-}_{\norm{\cdot}_K}(C) \geq -\frac{1}{q} \mu(\Real^n \setminus K)^q  \mu(C)^{1-q} \;\; \text{ with } \;\; \frac{1}{q} = \frac{1}{p} + n ~,
\end{equation}
for any Borel set $C$ with $\mu(C) < \infty$, and with equality for $C = \Real^n \setminus tK$,
 $t > 0$.
In other words, complements of homothetic copies of $K$ are isoperimetric minimizers for $(\Real^n, \norm{\cdot}_K, \mu)$.
\end{thm}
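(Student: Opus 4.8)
The plan is to obtain \eqref{eq:hom-iso} from a single Brunn--Minkowski-type input and a linearization, in exact analogy with how Corollary \ref{cor:intro-BBL} follows from the $q$-concavity of $\mu$, except that here the role of ``sets'' is everywhere played by ``complements of sets''. The input is the $q$-complemented Brunn--Minkowski inequality announced in the abstract: for a $p$-homogeneous $w$ with $p \in (-1/n,0)$, $1/q = 1/p + n$, and all Borel $A,B \subset \Real^n$ with $\mu(\Real^n\setminus A),\mu(\Real^n\setminus B) < \infty$,
\begin{equation}\label{eq:cBM}
\mu_*\brac{\Real^n \setminus(\lambda A + (1-\lambda)B)} \le \brac{\lambda\,\mu(\Real^n\setminus A)^q + (1-\lambda)\,\mu(\Real^n\setminus B)^q}^{1/q} \qquad \forall\, \lambda \in (0,1).
\end{equation}
Establishing \eqref{eq:cBM} --- with \emph{no} concavity hypothesis on $w$ --- is the real content, and I return to it at the end. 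The only feature of $\mu$ used below is that $p$-homogeneity of $w$ renders $\mu$ itself $\tfrac1q$-homogeneous: $\mu(sS) = s^{1/q}\mu(S)$ for all $s > 0$ and Borel $S$ (since $\int_{sS} w = s^{n}\int_S w(s\,\cdot) = s^{n + 1/p}\int_S w$).

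Granting \eqref{eq:cBM}, I would finish as follows. Fix Borel $C$ with $0 < \mu(C) < \infty$ and set $E := \Real^n\setminus C$, so that $\mu(\Real^n\setminus E) = \mu(C) < \infty$ and $\mu^-_{\norm{\cdot}_K}(C) = \mu^+_{\norm{\cdot}_K}(E) = \liminf_{\eps\to 0^+}\eps^{-1}\mu\brac{(E+\eps K)\setminus E}$. Since $0 \in K$ and $K$ is open, $E + \eps K$ is open (hence Borel) and contains $E$, so
\[
\mu\brac{(E+\eps K)\setminus E} = \mu(C) - \mu\brac{\Real^n\setminus(E+\eps K)}.
\]
For any $t \in (0,1)$, write $E + \eps K = (1-t)\brac{\tfrac{1}{1-t}E} + t\brac{\tfrac{\eps}{t}K}$ and apply \eqref{eq:cBM} with $\lambda = 1-t$, $A = \tfrac{1}{1-t}E$, $B = \tfrac{\eps}{t}K$; the complements $\tfrac{1}{1-t}C$ and $\tfrac{\eps}{t}(\Real^n\setminus K)$ have finite $\mu$-measure, and $E+\eps K$ is measurable, so $\mu_*$ may be replaced by $\mu$. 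By $\tfrac1q$-homogeneity, $(1-t)\mu(\tfrac1{1-t}C)^q = \mu(C)^q$ and $t\,\mu(\tfrac{\eps}{t}(\Real^n\setminus K))^q = \eps\,\mu(\Real^n\setminus K)^q$, so the right-hand side of \eqref{eq:cBM} collapses --- \emph{independently of $t$} --- to $\brac{\mu(C)^q + \eps\,\mu(\Real^n\setminus K)^q}^{1/q}$, giving
\[
\mu\brac{(E+\eps K)\setminus E} \ \ge\ \mu(C) - \brac{\mu(C)^q + \eps\,\mu(\Real^n\setminus K)^q}^{1/q}.
\]
Dividing by $\eps$, letting $\eps \to 0^+$, and differentiating $\eps \mapsto \brac{\mu(C)^q + \eps\,\mu(\Real^n\setminus K)^q}^{1/q}$ at $0$ yields $\mu^-_{\norm{\cdot}_K}(C) \ge -\tfrac1q\,\mu(\Real^n\setminus K)^q\,\mu(C)^{1-q}$, which is \eqref{eq:hom-iso} (recall $-1/q > 0$). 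The cases $\mu(C) = 0$ or $\mu(\Real^n\setminus K) = 0$ are trivial.

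For the equality claim, I would take $C = \Real^n\setminus tK$, so $E = tK$; since $K$ is convex (being the open unit ball of a semi-norm), $E + \eps K = tK + \eps K = (t+\eps)K$, and therefore $\mu\brac{(E+\eps K)\setminus E} = \mu(\Real^n\setminus tK) - \mu(\Real^n\setminus(t+\eps)K) = \brac{t^{1/q} - (t+\eps)^{1/q}}\mu(\Real^n\setminus K)$ by $\tfrac1q$-homogeneity. Dividing by $\eps$ and letting $\eps\to 0^+$ gives precisely $-\tfrac1q t^{1/q-1}\mu(\Real^n\setminus K) = -\tfrac1q\,\mu(\Real^n\setminus K)^q\,\mu(\Real^n\setminus tK)^{1-q}$, so \eqref{eq:hom-iso} holds with equality and complements of homothetic copies of $K$ are indeed isoperimetric minimizers.

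The main obstacle is thus \eqref{eq:cBM} itself, which is not a consequence of Theorem \ref{thm:intro-BBL} since $w$ need not be $p$-concave. The approach I would pursue is a polar-coordinate reduction: writing $x = r\theta$ and using $w(r\theta) = r^{1/p}w(\theta)$ with $1/p + n - 1 = 1/q - 1$, the quantity $\mu(\Real^n\setminus S)$ takes the form $|q|\int_{S^{n-1}}\rho_S(\theta)^{1/q}\,w(\theta)\,d\theta$ for an appropriate radial profile $\rho_S$ of $\Real^n\setminus S$, so that \eqref{eq:cBM} should reduce to the \emph{reverse} Minkowski inequality in $L^{1/q}(w(\theta)\,d\theta)$ (valid since $1/q < 0$) combined with the super-additivity of these profiles under Minkowski addition. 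Where I expect the genuine difficulty to lie is that the restriction of $\Real^n\setminus(\lambda A + (1-\lambda)B)$ to a ray is \emph{not} determined by the corresponding restrictions of $\Real^n\setminus A$ and $\Real^n\setminus B$ --- Minkowski addition mixes directions, and a ray may exit and re-enter a set --- so one must first reduce, by a rearrangement replacing $A$ and $B$ by complements of star-shaped bodies with the same value of $\mu(\Real^n\setminus\cdot)$, to a configuration in which the radial profiles do control $\mu(\Real^n\setminus(\lambda A + (1-\lambda)B))$ from above; carrying out that rearrangement --- not the linearization above --- is the crux.
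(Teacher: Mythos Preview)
Your linearization from the $q$-complemented Brunn--Minkowski inequality \eqref{eq:cBM} to the isoperimetric inequality is correct and is precisely the paper's Proposition~\ref{prop:isop-negative-q}: the scaling by homogeneity, the derivative at $\eps=0$, and the equality check for $C=\Real^n\setminus tK$ all match. Your outline for \eqref{eq:cBM} itself --- radial disintegration plus the reverse triangle inequality in $L^{1/q}$ --- is also the paper's architecture (Theorem~\ref{thm:hom-CBM} together with Proposition~\ref{prop:props}(3)).

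Where you go wrong is in locating the crux. You claim a rearrangement to complements of star-shaped bodies is needed because ``Minkowski addition mixes directions''. That rearrangement would not work as described (replacing $A,B$ by star-shaped $A^*,B^*$ with the same complement measure gives no inclusion between $\lambda A+(1-\lambda)B$ and $\lambda A^*+(1-\lambda)B^*$), and more importantly it is unnecessary: the trivial inclusion $\lambda(A\cap\Real_\theta)+(1-\lambda)(B\cap\Real_\theta)\subset(\lambda A+(1-\lambda)B)\cap\Real_\theta$ already goes the right way for complements, giving $\mu_\theta\bigl(\Real_\theta\setminus(\lambda A+(1-\lambda)B)\bigr)\le\mu_\theta\bigl(\Real_\theta\setminus(\lambda A_\theta+(1-\lambda)B_\theta)\bigr)$. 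What you then need is a one-dimensional $q$-CBM for $\mu_\theta$ valid for \emph{arbitrary} Borel $A_\theta,B_\theta\subset\Real_\theta$, not just intervals --- and this is exactly what your radial-profile formula cannot supply. The paper gets it cheaply by observing that on a single ray the density $r\mapsto r^{1/q-1}$ is $p'$-concave with $1/p'=1/q-1$ (since $(r^{1/p'})^{p'}=r$ is linear), so the one-dimensional Borell--Brascamp--Lieb theorem makes $\mu_\theta$ genuinely $q$-concave; a short homogeneity bootstrap (Lemma~\ref{lem:conc-CC}: $q$-concave plus $q$-homogeneous implies $q$-CC) then yields the one-dimensional $q$-CBM for arbitrary sets. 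Integrating over $\theta$ and applying your reverse Minkowski inequality in $L^{1/q}$ finishes. So the real point is not a rearrangement but that the concavity missing in dimension $n$ is automatic in dimension $1$.
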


In fact, the result remains valid when $\mu$ is an arbitrary $\sigma$-finite $q$-homogeneous Borel measure, satisfying $\mu^q(\lambda A) = \lambda \mu^q(A)$ for all Borel sets $A \subset \Real^n$ and $\lambda > 0$. Furthermore, we show in Section \ref{sec:Homogeneous} that up to sets of zero $\mu$-measure, complements of homothetic copies of $K$ are the \emph{unique} isoperimetric minimizers in Theorem \ref{thm:intro-main1} (no translations are necessary in the negatively homogeneous case). When $w$ is in addition assumed $p$-concave, under some additional technical smoothness assumptions on $w$ and the support of $\mu$, and when $K$ is the Euclidean ball, Theorem \ref{thm:intro-main1} and the uniqueness of isoperimetric minimizers has been established by Ca\~{n}ete and Rosales \cite{CaneteRosales-StableMinimizersForHomogeneousConcaveDensities,CaneteRosales-IsopProblemForHomogeneousDensities}.
When $w(x) = \abs{x}^\frac{1}{p}$ and $K$ is the Euclidean ball, Theorem \ref{thm:intro-main1} was previously obtained by D{\'{\i}}az, Harman, Howe and Thompson \cite{DHHT-IsopInSectorsWithDensity}. A more general result has been established by Howe \cite{Howe-IsoperimetryInWarpedProducts}, see Theorem \ref{thm:intro-main2} below. 

The proof of Theorem \ref{thm:intro-main1} is completely analogous to the one of Corollary \ref{cor:intro-BBL}, once it has been shown that any homogeneous measure $\mu$ as above is in fact \emph{complemented-concave}:

\begin{dfn*}
A Borel measure $\mu$ on $\Real^n$ is said to be \emph{$q$-complemented concave} (``$q$-CC"), and is said to satisfy a $q$-complemented Brunn--Minkowski inequality (``$q$-CBM"), $q \in [-\infty,+\infty]$, if:
\begin{multline} \label{def:intro-q-CBM}
\forall \lambda \in (0,1) \;\;\;  \forall \text{ Borel sets $A,B \subset \Real^n$ such that } \mu(\Real^n \setminus A),\mu(\Real^n \setminus B) <\infty ~, \\
\mu_*(\Real^n \setminus (\lambda A + (1-\lambda) B)) \leq \brac{\lambda \mu(\Real^n \setminus A)^{q} + (1-\lambda) \mu(\Real^n \setminus B)^{q}}^{1/q} ~.
\end{multline}
Here $\mu_*$ denotes the inner measure induced by $\mu$.
\end{dfn*}

The complemented Brunn--Minkowski  inequality considered above seems new, and constitutes the main impetus for this work. It is a-priori not even clear that there are non-trivial examples of $q$-complemented concave measures (at least for $q < \infty$), but it turns out that when $p \in (-\infty,-\frac{1}{n-1}] \cup (-\frac{1}{n},0) \cup (0,\infty]$ (equivalently $q \leq 1$ and $q \neq 0$), any $p$-homogeneous Borel density (and more generally, $\sigma$-finite $q$-homogeneous Borel measure) gives rise to such a measure. In particular, (\ref{def:intro-q-CBM}) holds with $q=1/n$ for $\mu$ being the Lebesgue measure restricted to an arbitrary measurable cone. Various additional properties satisfied by the class of $q$-CC measures are studied in Section \ref{sec:Properties}. One crucial feature of the $q$-CC class is that it is a convex cone when $q \leq 1$, much in contrast with the Borell--Brascamp--Lieb class of $q$-concave measures; other dissimilarities are also considered. In particular, we show that when $q < \infty$, any non-zero $q$-CC measure must have infinite mass, and that contrary to the Borell--Brascamp--Lieb Theorem \ref{thm:intro-BBL}, this class does not admit any local characterization. 

\subsection{Non-Homogeneous Log-Convex Densities} 

In Section \ref{sec:NonHomogeneous}, we go beyond the homogeneous setting, and obtain the following generalization of Theorem \ref{thm:intro-main1} (in fact, we obtain a more general version):

\begin{thm} \label{thm:intro-main2}
Let $\norm{\cdot}$ denote a norm on $\Real^n$ with open unit-ball $K$. Let $w_0 : \Real^n \rightarrow \Real_+$ denote a $1$-homogeneous measurable function so that $\int_K w_0(x) dx < \infty$. Let $\varphi : (0,\infty) \rightarrow \Real_+$ denote a non-increasing function, non-integrable at the origin, integrable at infinity, and so that $\log \varphi : (0,\infty) \rightarrow [-\infty,\infty)$ is convex. Set:
\[
\mu = w_0(x / \norm{x}_K) \norm{x}_K^{1-n} \varphi(\norm{x}_K) dx ~. 
\]
Then, denoting  $\Phi(t) := \int_t^\infty \varphi(s) ds$ and $I := \varphi \circ \Phi^{-1}$, the following isoperimetric inequality holds:
\[
\mu^{-}_{\norm{\cdot}_K}(C) \geq I(\mu(C)) 
\]
for any Borel set $C \subset \Real^n$ with $\mu(C) < \infty$, and with equality for $C = \Real^n \setminus tK$, $t > 0$.
In other words, complements of homothetic copies of $K$ are isoperimetric minimizers for $(\Real^n, \norm{\cdot}_K, \mu)$.
\end{thm}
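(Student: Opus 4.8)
The plan is to follow the route by which Corollary~\ref{cor:intro-BBL} is deduced from the Borell--Brascamp--Lieb Theorem~\ref{thm:intro-BBL}, with the role of the latter now played by a ``complemented'' Brunn--Minkowski inequality governed by the convex profile $\Phi$ rather than by an $\ell_q$-average. First I would unwind the structure of $\mu$ by polar integration with respect to the gauge $\norm{\cdot}_K$: writing $x = r\theta$ with $r = \norm{x}_K$ and $\theta \in \partial K := \set{x \in \Real^n : \norm{x}_K = 1}$, there is a finite measure $\sigma_K$ on $\partial K$ with $\int_{\Real^n} f\,dx = \int_{\partial K}\int_0^\infty f(r\theta)\,r^{n-1}\,dr\,d\sigma_K(\theta)$, and the factor $r^{n-1}$ exactly cancels $\norm{x}_K^{1-n}$ in the density of $\mu$. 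Thus $\mu$ disintegrates as the measure which along each ray $\Real_+\theta$ equals $w_0(\theta)\,\varphi(r)\,dr$, mixed against $\sigma_K$. Writing $c := \int_{\partial K} w_0\,d\sigma_K = n\int_K w_0\,dx$ --- which, after rescaling $w_0$, we may and do assume equals $1$, so that $\Phi$ and $I$ are the genuinely relevant objects --- one then computes $\mu(\Real^n\setminus tK) = \Phi(t)$ and $\mu^-_{\norm{\cdot}_K}(\Real^n\setminus tK) = \varphi(t)$, so that the asserted equality case is exactly the identity $I = \varphi\circ\Phi^{-1}$. (The radial reparametrization $x\mapsto (n\,\Phi(\norm{x}_K))^{1/n}\, x/\norm{x}_K$ pushes $\mu$ forward to the $\tfrac1n$-homogeneous measure $w_0(y/\norm{y}_K)\,dy$, to which Theorem~\ref{thm:intro-main1} applies; but this map is nonlinear and so does not transport Brunn--Minkowski-type inequalities, and I would not rely on it.)

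The crucial intermediate step, which plays for the non-power profile $\Phi$ the role that ``$q$-complemented concavity'' plays in Theorem~\ref{thm:intro-main1}, is the following generalized complemented Brunn--Minkowski inequality: set $R(S) := \Phi^{-1}\brac{\mu(\Real^n\setminus S)}$ for Borel $S$ with $\mu(\Real^n\setminus S) < \infty$; then
\[
R\brac{\lambda A + (1-\lambda) B} \;\geq\; \lambda\, R(A) + (1-\lambda)\, R(B) \qquad \forall\, \lambda\in(0,1),
\]
which, since $\Phi$ is a decreasing convex bijection of $(0,\infty)$ onto itself, is equivalent to $\mu_*(\Real^n\setminus(\lambda A + (1-\lambda)B))\leq\Phi\brac{\lambda R(A) + (1-\lambda)R(B)}$. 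Granting this, I would obtain the theorem just as Corollary~\ref{cor:intro-BBL} follows from Borell--Brascamp--Lieb. Since $R(sK) = \Phi^{-1}(\mu(\Real^n\setminus sK)) = s$ for every $s > 0$, applying the inequality to $A + \eps K = \lambda\,(A/\lambda) + (1-\lambda)\,\brac{\tfrac{\eps}{1-\lambda}K}$ gives $R(A + \eps K) \geq \lambda R(A/\lambda) + \eps$ for all $\lambda\in(0,1)$; letting $\lambda \to 1^-$ and using $\mu(\Real^n\setminus (A/\lambda))\to\mu(\Real^n\setminus A)$ (dominated convergence, as $\Real^n\setminus A$ has finite $\mu$-measure and is bounded away from the origin) yields $R(A+\eps K)\geq R(A) + \eps$. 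Taking $A := \Real^n\setminus C$, so $R(A) = \Phi^{-1}(\mu(C))$, and noting that $A + \eps K$ is open, one has $\mu^-_{\norm{\cdot}_K}(C) = \liminf_{\eps\to0^+}\eps^{-1}\brac{\mu(C) - \mu(\Real^n\setminus(A+\eps K))}$, and hence
\begin{align*}
\mu^-_{\norm{\cdot}_K}(C) &\;=\; \liminf_{\eps\to0^+}\frac{\Phi\brac{R(A)} - \Phi\brac{R(A+\eps K)}}{\eps} \\
&\;\geq\; \liminf_{\eps\to0^+}\frac{\Phi\brac{R(A)} - \Phi\brac{R(A)+\eps}}{\eps} \;=\; \varphi\brac{R(A)} \;=\; I(\mu(C)),
\end{align*}
with equality for $C = \Real^n\setminus tK$ because there the complemented Brunn--Minkowski inequality and every limit above are equalities.

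It remains to prove the generalized complemented Brunn--Minkowski inequality, which is the heart of the matter. I would prove it by symmetrization: replace $A$ and $B$ by the homothetic balls $A^\sharp := R(A)\,K$ and $B^\sharp := R(B)\,K$ centered at the origin, which have the same $\mu$-measure of complement; by convexity of $K$, $\lambda A^\sharp + (1-\lambda)B^\sharp = (\lambda R(A) + (1-\lambda)R(B))\,K$, so the inequality holds for $A^\sharp, B^\sharp$ with equality, and it suffices to show that this symmetrization does not decrease $\mu_*(\Real^n\setminus(\lambda A + (1-\lambda)B))$ --- informally, that fragmenting a set away from the origin can only help to fill in the Minkowski combination. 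Using the polar disintegration and the elementary inclusion $(\lambda A + (1-\lambda)B)\cap\Real_+\theta \supseteq \lambda(A\cap\Real_+\theta) + (1-\lambda)(B\cap\Real_+\theta)$ (Minkowski combinations of points of a ray through the origin remain on that ray), this reduces to a one-dimensional complemented Brunn--Minkowski inequality for $\varphi(r)\,dr$ on $(0,\infty)$ --- proved by rearranging each subset of $(0,\infty)$ to an initial interval with the same complement-measure, for which it is an identity --- together with a correct assembly of the ray-wise estimates over $\partial K$ against $w_0\,d\sigma_K$, a Jensen-/Mulholland-type inequality for $\Phi$. It is exactly here that the log-convexity of $\varphi$ is needed: convexity of $\Phi$ alone (equivalently monotonicity of $\varphi$) is not enough, whereas in the homogeneous case $\varphi(r) = r^{1/q-1}$ log-convexity is automatic and the resulting inequality is the $q$-complemented Brunn--Minkowski inequality for the $\tfrac1n$- (or $q$-) homogeneous measure already in hand. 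I expect this last step --- establishing the sharp one-dimensional inequality with the correct extremizers and assembling it correctly, together with the routine but necessary care about the inner measure $\mu_*$ and about the measurability of $\lambda A + (1-\lambda)B$ --- to be the main obstacle.
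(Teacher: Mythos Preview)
Your plan has the right skeleton --- polar disintegration along rays through the origin, a one-dimensional inequality on each ray, and a Jensen-type assembly using log-convexity of $\varphi$ --- and these are exactly the ingredients the paper uses. But two points are worth flagging.

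First, the paper's route is more direct than yours. Rather than establishing an $n$-dimensional complemented Brunn--Minkowski inequality and then differentiating, the paper proves the isoperimetric inequality immediately: on each ray one has the one-dimensional isoperimetric inequality $(\mu_\theta)^-_{B_\theta}(C_\theta)\ge I(\mu_\theta(C_\theta))$, and then Fatou plus Jensen give
\[
\mu^-_B(C) \;\ge\; \int (\mu_\theta)^-_{B_\theta}(C_\theta)\,d\eta(\theta) \;\ge\; \int I(\mu_\theta(C_\theta))\,d\eta(\theta) \;\ge\; I\!\left(\int \mu_\theta(C_\theta)\,d\eta(\theta)\right) \;=\; I(\mu(C)).
\]
The Jensen step requires precisely that $I=\varphi\circ\Phi^{-1}$ be convex, which (via $I''=\tfrac{(\log\varphi)''}{\varphi}\circ\Phi^{-1}$) is exactly the log-convexity of $\varphi$. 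This avoids your limit $\lambda\to1^-$ and the dominated-convergence step for $\mu(\Real^n\setminus(A/\lambda))$.

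Second, the two-set inequality $R(\lambda A+(1-\lambda)B)\ge\lambda R(A)+(1-\lambda)R(B)$ that you call the ``crucial intermediate step'' is stronger than what you need and, as stated, is problematic. The paper explicitly remarks that even in dimension one the OCBM inequality can fail for general Borel $B$ and log-convex $\varphi$; only the one-sided form $R(A+tK)\ge R(A)+t$ (i.e.\ $B$ a homothety of $K$) is established, and it is this that your deduction actually uses. Correspondingly, your ``Jensen/Mulholland assembly'' for the full two-set version would require a two-variable concavity that does not follow from log-convexity alone; what \emph{does} follow is the concavity of $x\mapsto\Phi(\Phi^{-1}(x)+t_0)$ for fixed $t_0$, which is exactly the Jensen step the paper uses to assemble the one-sided OCBM. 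So your proposal goes through once you restrict to $B=sK$, but the cleanest path is to bypass the CBM altogether and run Jensen directly on the one-dimensional isoperimetric profile $I$.
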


Furthermore, we show in Section \ref{sec:NonHomogeneous} that when $\log \varphi$ is strictly convex or when the support of $\mu$ is convex, complements of homothetic copies of $K$ are, up to sets of zero $\mu$-measure, the \emph{unique} isoperimetric minimizers in Theorem \ref{thm:intro-main2} (among all sets whose complements have finite Lebesgue measure). Theorem \ref{thm:intro-main2} and the corresponding uniqueness of isoperimetric minimizers has been explicitly obtained by Howe \cite{Howe-IsoperimetryInWarpedProducts} for the case when $K$ is the Euclidean ball and $w_0$ and $\varphi$ are continuous and positive on $S^{n-1}$ and $(0,\infty)$, respectively, by using a warped-product representation of the space; see \cite[Section 3]{Howe-IsoperimetryInWarpedProducts} for additional isoperimetric results in more general situations. As Howe points out, his results and methods should apply for arbitrary star-shaped smooth compact domains $K$ (in the spirit of our more general formulation in Section \ref{sec:NonHomogeneous}), but even in that case, it seems that Howe's Riemannian warped-product boundary measure does not coincide with our ``Finslerian" notion, and there are various technicalities which should be taken care of, such as using the convexity of $K$ where needed and defining the corresponding measure $\mu$ correctly; furthermore, it is not clear how to extend Howe's  method to the case when $w_0$ is no longer assumed continuous or positive. Contrary to the PDE and Geometric Measure Theory approaches of Cabr\'e--Ros-Oton--Serra and Ca\~{n}ete--Rosales mentioned above, Howe's approach is based on the convexity of the function $I$ above and in that sense is essentially identical to our own approach. However, our emphasis is on the associated Brunn--Minkowski type inequality satisfied by measures $\mu$ as above:
\[
 \mu(\Real^n\setminus (A + t K)) \leq \Phi( \Phi^{-1}(\mu(\Real^n \setminus A)) + t \Phi^{-1}(\mu(\Real^n \setminus K)) ) ~,
 \]
 for all $t \geq 0$ and any Borel set $A$ so that $\mu(\Real^n \setminus A) < \infty$.
 
 \subsection{Functional Versions}
 
All of the isoperimetric inequalities formulated above also have global Brunn--Minkowski type versions, which are obtained in the corresponding sections. 
To complete the picture, we formulate and prove in Section \ref{sec:Functional} various additional functional versions of these inequalities. In particular, we obtain seemingly new Sobolev and Nash-type inequalities with respect to locally-integrable $p$-homogeneous densities on $\Real^n \setminus \set{0}$ for $ p \in (-1/n,0)$. For instance, we show that when $\mu$ is a measure on $\Real^n$ having such a density, then for any locally Lipschitz function $f : \Real^n \rightarrow \Real$ with $f(0) = 0$ and any norm $\norm{\cdot}$ on $\Real^n$, we have:
\[
\norm{f}_{L^1(\mu)} \leq C_2  (\int \norm{\nabla f}^* d\mu)^{\alpha}  \norm{f}_{L^\infty(\mu)}^{1-\alpha} ~,~ 
\]
and for any $\beta \in (0,\alpha)$:
\[
\int \norm{\nabla f}^* d\mu \geq C_1\brac{\frac{\alpha-\beta}{\alpha}}^{1/\beta} \norm{f}_{L^{\beta}(\mu)} ~,
\]
where:
\[
 \frac{1}{q} = \frac{1}{p} + n ~,~ \alpha := \frac{1}{1-q} ~,~  C_1 := -\frac{1}{q}  \mu(\set{\norm{x} \geq 1})^q ~,~ C_2 := C_1^{-\alpha} ~.
 \]

\bigskip
\begin{rem}
Most of the previously obtained isoperimetric results mentioned above employ the notion of weighted (De Giorgi) perimeter instead of our preferred notion of (Minkowski's) boundary measure (see e.g. \cite{BuragoZalgallerBook,MorganBook4Ed} for background and definitions). One may show (see \cite[Theorem 14.2.1]{BuragoZalgallerBook}) that the former notion is always a-priori stronger than the latter; however, for sufficiently regular sets, the two notions always coincide, and since isoperimetric minimizers are typically ensured to be ``sufficiently regular" by the powerful results of Geometric Measure Theory (see \cite{MorganBook4Ed}), a-posteriori the particular definition used makes no difference in many situations. Our preference to work with Minkowski's boundary measure is due to its intimate connection to Brunn--Minkowski-type inequalities, which are really the protagonists of this work; furthermore, it allows us to treat arbitrary Borel sets and measures, without any assumptions on smoothness. 
\end{rem}
 
\medskip

\textbf{Acknowledgement.} We thank Franck Barthe, Xavier Cabr\'e, Antonio Ca\~{n}ete, Dario Cordero-Erausquin, Sean Howe, Erwin Lutwak, Frank Morgan and C\'esar Rosales for helpful correspondence during the preparation of this work. We also thank the referee for carefully reading the manuscript and for his / her meticulous comments.

\section{Positively Homogeneous and Concave Measures} \label{sec:Positive-P}

\subsection{$p$-concave densities and $q$-concave measures}

We start with the following definitions. Let $E$ be an affine vector space. 
\begin{defn}
A measurable function $w : E \rightarrow \Real_+$ is called $p$-concave ($p \in [-\infty,\infty]$) if:
\[
\forall x,y \in E \;\; \text{ so that } \;\; w(x) w(y) > 0  ~,
\]
we have:
\[
\forall \lambda \in (0,1) \;\;\; w(\lambda x + (1-\lambda) y) \geq \brac{\lambda w(x)^p + (1-\lambda) w(y)^p}^{1/p} ~.
\]
Here and elsewhere the right-hand side is interpreted as $\max(w(x),w(y))$ when $p=\infty$, as $\min(w(x),w(y))$ when $p=-\infty$, and as $w(x)^\lambda w(y)^{1-\lambda}$ when $p=0$. 
\end{defn}
\begin{defn} \label{def:q-concave}
A Borel measure $\mu$ on $E$ is called $q$-concave ($q \in [-\infty,+\infty]$) if:
\[
\forall \text{ Borel subsets } A,B \subset E \;\; \text{ so that } \;\; \mu(A) \mu(B) > 0 ~,
\]
we have:
\begin{equation} \label{eq:BBL}
\forall \lambda \in (0,1) \;\;\;  \mu(\lambda A + (1-\lambda) B) \geq \brac{\lambda \mu(A)^q + (1-\lambda) \mu(B)^q}^{1/q}  ~.
\end{equation}
The cases $q \in \set{-\infty,0,\infty}$ are interpreted as above. 
\end{defn}

\begin{rem}
These definitions appear implicitly in the work of Brascamp and Lieb \cite{BrascampLiebPLandLambda1} and explicitly in the work of Borell \cite{BorellConvexMeasures}, who used the name ``convex measures" to encompass the entire class of $(-\infty)$-concave measures. The requirement that the above conditions should only hold when $w(x) w(y) > 0$ or $\mu(A) \mu(B) > 0$ is redundant when $p<0$ or $q<0$, respectively, but is otherwise crucial. The measurability of $\lambda A + (1-\lambda)B$ is addressed in Remark \ref{rem:analytic}. 
\end{rem}

The starting point of our investigation in this work is the following:
\begin{thm} \label{thm:BBL}
Let $p \in [-1/n,\infty]$ and $q \in [-\infty,1/n]$ satisfy:
\begin{equation} \label{eq:pq}
 \frac{1}{q} = \frac{1}{p} + n ~.
 \end{equation}
\begin{enumerate}
\item
(Borell \cite{BorellConvexMeasures}, Brascamp--Lieb \cite{BrascampLiebPLandLambda1}). 
If $w : \Real^n \rightarrow \Real_+$  is a $p$-concave function, then the measure $\mu = w(x) dx$ is $q$-concave.
\item
(Borell \cite[Theorem 3.1]{BorellConvexMeasures}). 
Conversely, if $\mu$ is a $q$-concave measure on $\Real^m$, then $\mu$ is supported on an $n$-dimensional affine subspace $E$, and has a density $w$ with respect to the Lebesgue measure on $E$ which is $p$-concave.
\end{enumerate} 
\end{thm}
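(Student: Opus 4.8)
The plan is to treat the two assertions separately, since they are classical for quite different reasons: assertion (1) is the functional Borell--Brascamp--Lieb inequality specialized to indicators, while assertion (2) is Borell's structure theorem for convex measures.

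For (1), I would derive it from the functional form: if $f,g,h:\Real^n\to\Real_+$ are measurable and $h(\lambda x+(1-\lambda)y)\ge\brac{\lambda f(x)^p+(1-\lambda)g(y)^p}^{1/p}$ for all $x,y\in\Real^n$ (the right-hand side read as $0$ whenever $f(x)g(y)=0$, and with the usual conventions for $p\in\set{0,\pm\infty}$), then $\int h\ge\brac{\lambda(\int f)^q+(1-\lambda)(\int g)^q}^{1/q}$, where $\frac1q=\frac1p+n$. This is proved by induction on $n$. The base case $n=1$ reduces --- via the layer-cake formula $\int\phi=\int_0^\infty\abs{\set{\phi>t}}\,dt$ and the inclusion $\set{h>(\lambda s^p+(1-\lambda)r^p)^{1/p}}\supseteq\lambda\set{f>s}+(1-\lambda)\set{g>r}$ --- to the one-dimensional Brunn--Minkowski inequality $\abs{\lambda C+(1-\lambda)D}\ge\lambda\abs{C}+(1-\lambda)\abs{D}$ together with the elementary H\"older-type inequality $\brac{\lambda\alpha^p+(1-\lambda)\beta^p}^{1/p}\cdot\brac{\lambda\frac{a}{\alpha}+(1-\lambda)\frac{b}{\beta}}\ge\brac{\lambda a^q+(1-\lambda)b^q}^{1/q}$ valid for $\frac1q=\frac1p+1$; the cleanest way to combine them is to parametrize $f,g$ by their equal-mass quantile maps $X,Y:[0,1]\to\Real$ (so $X'(\theta)f(X(\theta))=\int f$, etc.) and bound $\int h$ below by integrating $h$ against the push-forward of $d\theta$ under $\theta\mapsto\lambda X(\theta)+(1-\lambda)Y(\theta)$. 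The inductive step is Fubini on $\Real^n=\Real^{n-1}\times\Real$: apply the $(n-1)$-dimensional inequality on each horizontal slice to pass to one-dimensional profiles $F,G,H$ on the last coordinate, then apply the one-dimensional inequality with the appropriately shifted exponent. Assertion (1) then follows by taking $f=w\,\ind{A}$, $g=w\,\ind{B}$, $h=w\,\ind{\lambda A+(1-\lambda)B}$: $p$-concavity of $w$ is precisely the pointwise hypothesis, and the conclusion is precisely $\mu(\lambda A+(1-\lambda)B)\ge\brac{\lambda\mu(A)^q+(1-\lambda)\mu(B)^q}^{1/q}$. (For $p\le0$ the vanishing convention is not needed; for $p>0$ it is, or one uses that $w^p$ is concave hence $w$ is locally bounded and approximates; the cases $\mu(A)=\infty$ or $\mu(B)=\infty$ follow by monotone approximation.)

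For (2), I would first observe that $\mathrm{supp}\,\mu$ is convex: if $x,y\in\mathrm{supp}\,\mu$ and $z=\lambda x+(1-\lambda)y$, then the open set $\lambda B(x,\eps)+(1-\lambda)B(y,\eps)\subseteq B(z,\eps)$ has measure $\ge\brac{\lambda\mu(B(x,\eps))^q+(1-\lambda)\mu(B(y,\eps))^q}^{1/q}>0$, so $z\in\mathrm{supp}\,\mu$. A convex set with empty interior lies in a hyperplane, so after restricting $\mu$ to the affine hull of its support (and iterating if needed) we may assume $\mathrm{supp}\,\mu$ has nonempty interior in $E\cong\Real^n$. The technical heart is to show $\mu\ll\mathrm{Leb}_E$; granting this, write $\mu=w\,dx$ and recover $p$-concavity by localizing: for almost every $x,y$ --- taken to be approximate-continuity points of $w$ with $w(x),w(y)>0$, and with $z=\lambda x+(1-\lambda)y$ likewise --- fix a convex body $K$ and apply $q$-concavity to $A=x+w(x)^p\tau K$ and $B=y+w(y)^p\tau K$. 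Then $\lambda A+(1-\lambda)B=z+\brac{\lambda w(x)^p+(1-\lambda)w(y)^p}\tau K$, and since $\mu(u+t\tau K)\sim w(u)\abs{t\tau K}$ as $\tau\to0^+$, dividing $\mu(\lambda A+(1-\lambda)B)\ge\brac{\lambda\mu(A)^q+(1-\lambda)\mu(B)^q}^{1/q}$ by $\abs{K}\tau^n$ and using $(1+np)q=p$ and $\frac1q-n=\frac1p$ gives exactly $w(z)\ge\brac{\lambda w(x)^p+(1-\lambda)w(y)^p}^{1/p}$. (The dilation factors $w(x)^p,w(y)^p$ are precisely the ones that optimize the resulting bound in $\tau$; a common dilation of $K$ would only yield the weaker $q$-concavity of $w$.) Comparing $\mu$ on three nested homothets about an interior density point similarly forces $nq\le1$, so $p\in[-1/n,\infty]$ is well defined; and since midpoint-$p$-concavity now holds for almost every pair, the standard passage from almost-everywhere midpoint concavity to genuine concavity for measurable functions produces a $p$-concave representative of $w$ (and shows $\mathrm{supp}\,\mu$ agrees with a convex set up to a $\mu$-null set).

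The main obstacle is the absolute-continuity step in (2): $q$-concavity only produces \emph{lower} bounds for the $\mu$-measure of Minkowski averages, so ruling out a singular part does not come for free and requires a separate argument --- a covering / dimension-counting estimate near interior points of $\mathrm{supp}\,\mu$, or an induction on the dimension of the faces of $\mathrm{supp}\,\mu$ (this is where Borell's proof does genuine work). In (1) the only delicate point is the one-dimensional base case and the attendant measure-theoretic bookkeeping (null sets, non-integrable or non-compactly-supported $f,g$), which is routine.
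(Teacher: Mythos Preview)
The paper does not supply a proof of this theorem: it is quoted verbatim as a classical result, with part (1) attributed to Borell and Brascamp--Lieb and part (2) to Borell, and is used throughout as a black box (the functional version you invoke is likewise quoted without proof as Theorem~\ref{thm:Dubuc}(1)). So there is no ``paper's own proof'' to compare against.

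Your sketch is the standard route and is essentially correct. For (1), reducing to the functional Borell--Brascamp--Lieb inequality and proving the latter by induction on $n$ (one-dimensional transport/quantile base case, Fubini for the inductive step) is exactly the classical argument; specializing to $f=w\,\ind{A}$, $g=w\,\ind{B}$, $h=w\,\ind{\lambda A+(1-\lambda)B}$ is the right way to recover the set version. For (2), your outline --- convexity of the support, reduction to the affine hull, absolute continuity, then localization on small homothets to extract $p$-concavity of the density --- is precisely Borell's scheme, and you correctly flag the absolute-continuity step as the place where genuine work is required. One minor caveat: in your localization step you should be a bit careful with the case $p<0$, where $w(x)^p$ may be large and the ``small'' homothets $x+w(x)^p\tau K$ are only small after sending $\tau\to0$; the argument still goes through but the order of limits matters.
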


\begin{rem} \label{rem:gen-BBL}
As already mentioned in the Introduction, the first part of the theorem is a generalization of the classical Brunn--Minkowski ($p=\infty$ and $w$ constant) and Prekop\'a--Leindler ($p=0$) Theorems (see e.g. \cite{GardnerSurveyInBAMS}). The restriction on the corresponding ranges of $p$ and $q$ above is due to the fact that there are no non-zero absolutely continuous $q$-concave measures when $q > 1/n$ \cite{BorellConvexMeasures}. By Jensen's inequality, we immediately see that the class of $p$-concave densities ($q$-concave measures) on $\Real^n$ becomes larger as $p$ ($q$) decreases from $+\infty$ to $-1/n$ ($1/n$ to $-\infty$). 
\end{rem}

We will assume throughout the rest of this section that $p$ and $q$ are related by (\ref{eq:pq}). For convenience, we note that $(-\infty,-1/n)$, $(-1/n,0)$ and $(0,\infty)$ in the $p$-domain get mapped to $(1/n,\infty)$, $(-\infty,0)$ and $(0,1/n)$ in the $q$-domain, respectively. 

\subsection{$p$-homogeneous densities and $q$-homogeneous measures}
 
 We will employ throughout this work the following notation. We denote by $B(x,r)$ the closed Euclidean ball centered at $x \in \Real^n$ and having radius $r>0$.  Furthermore:
 
\begin{defn}
A measurable function $w : \Real^n \rightarrow \Real_+$ is called $p$-homogeneous, $p \in (-\infty,\infty] \setminus \set{0}$, if:
\[
\forall x \in \Real^n \;\;\; \forall \lambda > 0 \;\;\; w(\lambda x)^p = \lambda w(x)^p ~,
\]
with the interpretation when $p=\infty$ that $w(\lambda x) = w(x)$ for all $\lambda > 0$, i.e. that $w$ is constant along rays from the origin.
\end{defn}
\begin{defn}
A Borel measure $\mu$ on $\Real^n$ is called $q$-homogeneous, $q \in (-\infty,\infty] \setminus \set{0}$, if:
\[
\forall \text{ Borel subset } A \subset \Real^n \;\;\; \forall \lambda > 0 \;\;\; \mu(\lambda A)^q = \lambda \mu(A)^q ~,
\]
with the interpretation when $q=\infty$ that $\mu(\lambda A) = \mu(A)$ for all $\lambda > 0$. 
\end{defn}

Notice that our $p$-homogeneous functions are often called $1/p$-homogeneous in the literature, and similarly for our $q$-homogeneous measures. Our convention will be more convenient in the sequel.

The following lemma is a trivial analogue of Theorem \ref{thm:BBL} for the class of homogeneous densities and measures:
\begin{lem} \label{lem:homogeneous}
Let $p \in (-\infty,\infty] \setminus \set{0}$ and  $q \in (-\infty,\infty] \setminus \set{0}$ be related by (\ref{eq:pq}). 
\begin{enumerate}
\item
Let $w : \Real^n \rightarrow \Real_+$  denote a $p$-homogeneous function. Then the measure $\mu = w(x) dx$ is $q$-homogeneous.
\item
Conversely, if $\mu = w(x) dx$ is a $q$-homogeneous measure on $\Real^n$, then $w$ coincides almost-everywhere with a $p$-homogeneous function. 
\end{enumerate} 
\end{lem}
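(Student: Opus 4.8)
The plan is to verify both parts by direct computation using the change-of-variables formula for the dilation $x \mapsto \lambda x$ on $\Real^n$, whose Jacobian is $\lambda^n$. For part (1), suppose $w$ is $p$-homogeneous and set $\mu = w(x)\,dx$. For a Borel set $A \subset \Real^n$ and $\lambda > 0$, substitute $y = \lambda x$:
\[
\mu(\lambda A) = \int_{\lambda A} w(y)\,dy = \int_A w(\lambda x) \lambda^n\,dx = \int_A \brac{\lambda^{1/p} w(x)} \lambda^n\,dx = \lambda^{1/p + n} \mu(A) = \lambda^{1/q} \mu(A) ~,
\]
where I used $w(\lambda x) = \lambda^{1/p} w(x)$ (the $p$-homogeneity, rewritten from $w(\lambda x)^p = \lambda w(x)^p$) and then (\ref{eq:pq}). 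Raising to the $q$-th power gives $\mu(\lambda A)^q = \lambda \mu(A)^q$, i.e.\ $\mu$ is $q$-homogeneous. The case $p = \infty$ (equivalently $q = 1/n$) is the same computation with $w(\lambda x) = w(x)$, yielding $\mu(\lambda A) = \lambda^n \mu(A)$. One should note that no finiteness of $\mu(A)$ is needed: the identity holds in $[0,+\infty]$.

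For part (2), suppose $\mu = w(x)\,dx$ is $q$-homogeneous. Applying the same change of variables to an arbitrary Borel set $A$ gives, for every $\lambda > 0$,
\[
\int_A \lambda^n w(\lambda x)\,dx = \mu(\lambda A) = \lambda^{1/q} \mu(A) = \lambda^{1/q} \int_A w(x)\,dx ~,
\]
so $\int_A \brac{\lambda^n w(\lambda x) - \lambda^{1/q} w(x)}\,dx = 0$ for all Borel $A$, whence for each fixed $\lambda$ we have $w(\lambda x) = \lambda^{1/q - n} w(x) = \lambda^{1/p} w(x)$ for Lebesgue-a.e.\ $x$. The mild obstacle here is the usual one: the a.e.\ exceptional set depends on $\lambda$, so this does not immediately produce a genuinely $p$-homogeneous representative. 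The standard fix is to upgrade ``for each $\lambda$, a.e.\ $x$'' to ``for a.e.\ $(\lambda,x)$'' via Fubini on $(0,\infty) \times \Real^n$, then to Fubini in the other order to conclude that for a.e.\ $x$ the identity $w(\lambda x) = \lambda^{1/p} w(x)$ holds for a.e.\ $\lambda > 0$; fixing one such good $\lambda_0$ and one good $x_0$ on each ray, one defines $\tilde w$ on the ray through $x_0$ by $\tilde w(\lambda x_0) := \lambda^{1/p} \lambda_0^{-1/p} w(\lambda_0 x_0)$, checks this is well-defined and $p$-homogeneous, and verifies $\tilde w = w$ a.e. Concretely it is cleanest to pass to polar coordinates: write $w(r\theta)$ for $r > 0$, $\theta \in S^{n-1}$, observe that for a.e.\ $\theta$ the function $r \mapsto r^{-1/p} w(r\theta)$ is a.e.\ equal to a constant $c(\theta)$, and set $\tilde w(r\theta) := c(\theta) r^{1/p}$; measurability of $c(\theta)$ follows from Fubini, and $\tilde w = w$ a.e.\ by construction. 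The $p = \infty$ case is identical with $1/p$ replaced by $0$. This second part is the only place requiring any care; the first part is a one-line change of variables.
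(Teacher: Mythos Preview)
Your proof is correct. Part (1) is the same change-of-variables computation as the paper's.

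For part (2) you take a different route from the paper. The paper simply tests the $q$-homogeneity of $\mu$ on balls $B(x,\eps)$ and invokes Lebesgue's differentiation theorem: since $\lambda B(x,\eps) = B(\lambda x, \lambda \eps)$ and $\mu(\lambda B(x,\eps)) = \lambda^{1/q}\mu(B(x,\eps))$, the precise representative
\[
\tilde w(x) := \limsup_{\eps \to 0} \frac{\mu(B(x,\eps))}{\abs{B(x,\eps)}}
\]
satisfies $\tilde w(\lambda x) = \lambda^{1/q - n}\tilde w(x) = \lambda^{1/p}\tilde w(x)$ for \emph{every} $x$ and $\lambda>0$, and $\tilde w = w$ a.e.\ by Lebesgue differentiation. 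This sidesteps entirely the ``exceptional set depends on $\lambda$'' issue that you identify and then resolve via Fubini and polar coordinates. Your argument is perfectly valid and arguably more elementary (it avoids the differentiation theorem), but the paper's approach is shorter because it produces a genuinely $p$-homogeneous representative in one stroke rather than patching together a.e.\ information along rays.
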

\begin{proof}[Sketch of proof]
\begin{enumerate}
\item
\[
\mu(\lambda A) = \int_{\lambda A} w(x) dx = \lambda^n \int_{A} w(\lambda y) dy = \lambda^{n + 1/p} \int_A w(y) dy = \lambda^{1/q} \mu(A) ~.
\]
\item
This follows similarly by testing $\mu$'s homogeneity property on Euclidean balls of the form $B(x,\eps)$ with $\eps \rightarrow 0$ and applying Lebesgue's differentiation theorem. 
\end{enumerate}
\end{proof}

\subsection{$q$-concave and $q$-homogeneous measures with $q > 0$}

The following elementary observation encapsulates the usefulness of combining the two previously described properties:
\begin{prop} \label{prop:positive-p}
Let $\mu$ denote a $q$-concave \emph{and} $q$-homogeneous measure on $\Real^n$ \emph{with $q \in  (0,1/n]$}. Let $\norm{\cdot}_K$ denote an arbitrary semi-norm on $\Real^n$ with open unit ball $K$. 
Then the following isoperimetric inequality holds:
\begin{equation} \label{eq:isop-positive-p}
\mu^{+}_{\norm{\cdot}_K}(A) \geq \frac{1}{q} \mu(K)^q  \mu(A)^{1-q} ~,
\end{equation}
for any Borel set $A$ with $\mu(A) < \infty$, and with equality for $A = t K$, $t > 0$. In other words, homothetic copies of $K$ are isoperimetric minimizers for $(\Real^n,\norm{\cdot}_K,\mu)$.
\end{prop}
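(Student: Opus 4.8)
The plan is to read the isoperimetric inequality \eqref{eq:isop-positive-p} off directly from the two structural hypotheses on $\mu$: the $q$-concavity plays the role of a Brunn--Minkowski inequality controlling $\mu(A+\eps K)$ from below, while the $q$-homogeneity linearizes the dependence on $\eps$, so that the $\liminf$ defining $\mu^{+}_{\norm{\cdot}_K}(A)$ can be evaluated exactly.

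First I would dispose of the degenerate cases. If $\mu(K)=0$ the right-hand side of \eqref{eq:isop-positive-p} vanishes, and if $\mu(A)=0$ it vanishes as well (here using $1-q>0$), so in either situation there is nothing to prove; hence we may assume $\mu(A),\mu(K)\in(0,\infty)$. Since $K$ is open, $A+\eps K=\bigcup_{a\in A}(a+\eps K)$ is open, hence Borel, for every $\eps>0$, so no measurability subtlety arises in what follows. Moreover, if $\mu(A+\eps K)=\infty$ for some $\eps>0$ then $\mu^{+}_{\norm{\cdot}_K}(A)=\infty$ and \eqref{eq:isop-positive-p} is trivial, so we may also assume $\mu(A+\eps K)<\infty$ for all small $\eps$.

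The heart of the argument is the identity, valid for any $\lambda\in(0,1)$,
\[
A+\eps K=\lambda\brac{\tfrac1\lambda A}+(1-\lambda)\brac{\tfrac{\eps}{1-\lambda}K} ~,
\]
to which I would apply $q$-concavity — legitimate because $\mu(\tfrac1\lambda A)$ and $\mu(\tfrac{\eps}{1-\lambda}K)$ are positive by $q$-homogeneity — and then $q$-homogeneity itself, which gives $\mu(\tfrac1\lambda A)^q=\tfrac1\lambda\mu(A)^q$ and $\mu(\tfrac{\eps}{1-\lambda}K)^q=\tfrac{\eps}{1-\lambda}\mu(K)^q$. The factors $\lambda$ and $1-\lambda$ cancel, leaving the $\lambda$-independent bound
\[
\mu(A+\eps K)^q\geq\mu(A)^q+\eps\,\mu(K)^q\qquad\forall\eps>0 ~.
\]
Since $0\in K$ we have $A+\eps K\supseteq A$, and since $\mu(A)<\infty$,
\[
\frac{\mu((A+\eps K)\setminus A)}{\eps}=\frac{\mu(A+\eps K)-\mu(A)}{\eps}\geq\frac{\brac{\mu(A)^q+\eps\,\mu(K)^q}^{1/q}-\mu(A)}{\eps} ~.
\]
Letting $\eps\to0$, the right-hand side converges to the derivative at $\eps=0$ of $\eps\mapsto(\mu(A)^q+\eps\,\mu(K)^q)^{1/q}$, namely $\tfrac1q\mu(A)^{1-q}\mu(K)^q$; taking $\liminf$ on the left yields \eqref{eq:isop-positive-p}.

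Finally, for the equality case $A=tK$ I would use that $K$ is convex with $0\in K$ (being the open unit ball of a semi-norm), so $tK+\eps K=(t+\eps)K$, and then $q$-homogeneity gives $\mu((t+\eps)K)=(t+\eps)^{1/q}\mu(K)$; differentiating at $\eps=0$ gives $\mu^{+}_{\norm{\cdot}_K}(tK)=\tfrac1q\, t^{1/q-1}\mu(K)$, which is readily checked to coincide with $\tfrac1q\mu(K)^q\mu(tK)^{1-q}$. I do not anticipate a genuine obstacle here; the only points requiring a little care are the reduction to $\mu(A),\mu(K)\in(0,\infty)$ (so that the positivity hypothesis in the definition of $q$-concavity, which is essential when $q>0$, is met) and the elementary one-sided limit computation.
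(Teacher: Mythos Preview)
Your proposal is correct and follows essentially the same approach as the paper: the paper likewise reduces to $\mu(A)\mu(K)>0$, obtains the scaled Brunn--Minkowski inequality $\mu(A+tK)\ge(\mu(A)^q+t\,\mu(K)^q)^{1/q}$ by combining $q$-concavity with $q$-homogeneity (your explicit $\lambda$-identity is exactly how the paper's ``scaling'' step is unpacked), differentiates at $t=0$, and checks equality via $tK+\eps K=(t+\eps)K$. One harmless slip: the claim that $\mu(A+\eps K)=\infty$ \emph{for some} $\eps>0$ forces $\mu^{+}_{\norm{\cdot}_K}(A)=\infty$ is not literally true; what you need (and what monotonicity in $\eps$ gives) is that either $\mu(A+\eps K)<\infty$ for all small $\eps$ or $\mu(A+\eps K)=\infty$ for \emph{all} $\eps>0$, but in any case this reduction is unnecessary since the key inequality $\mu(A+\eps K)^q\ge\mu(A)^q+\eps\,\mu(K)^q$ is trivially satisfied when the left-hand side is infinite.
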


As an immediate corollary of the Borell--Brascamp--Lieb Theorem \ref{thm:BBL} and Lemma \ref{lem:homogeneous}, we obtain Corollary \ref{cor:intro-BBL} from the Introduction, which we state here again for convenience. See the remarks and references following Corollary \ref{cor:intro-BBL} for previously known results. 

\begin{cor}
Let $\mu = w(x) dx$, and assume that $w : \Real^n \rightarrow \Real_+$ is $p$-concave \emph{and} $p$-homogeneous \emph{with $p\in (0,\infty]$}. Then homothetic copies of $K$ are isoperimetric minimizers for $(\Real^n,\norm{\cdot}_K,\mu)$, and (\ref{eq:isop-positive-p}) holds with $\frac{1}{q} = \frac{1}{p} + n$. 
\end{cor}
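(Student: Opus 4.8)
The plan is to obtain the Corollary as the concatenation of three facts already at our disposal. Since $w$ is $p$-concave with $p \in (0,\infty] \subset [-1/n,\infty]$, the Borell--Brascamp--Lieb Theorem \ref{thm:BBL}(1) shows that $\mu = w(x)\,dx$ is $q$-concave; since $w$ is moreover $p$-homogeneous, Lemma \ref{lem:homogeneous}(1) shows that $\mu$ is $q$-homogeneous; and the defining relation $1/q = 1/p + n$ maps $p \in (0,\infty]$ bijectively onto $q \in (0,1/n]$. Hence $\mu$ satisfies the hypotheses of Proposition \ref{prop:positive-p}, and (\ref{eq:isop-positive-p}) together with the fact that homothets of $K$ are minimizers follows at once. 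So all the arithmetic lives inside Proposition \ref{prop:positive-p}, whose proof I would organize in two steps.

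\emph{Step 1: a one-sided Brunn--Minkowski inequality.} I claim that $\mu(A+\eps K)^q \ge \mu(A)^q + \eps\,\mu(K)^q$ for every Borel $A$ and every $\eps>0$; one may assume $\mu(A),\mu(K)>0$, since otherwise the asserted bound is trivial (after fixing the $0\cdot\infty$ conventions). The mechanism is the elementary identity
\[
A+\eps K \;=\; \tfrac{1}{1+\eps}\bigl((1+\eps)A\bigr)\;+\;\tfrac{\eps}{1+\eps}\bigl((1+\eps)K\bigr),
\]
which exhibits $A+\eps K$ as the Minkowski convex combination, with weights $\lambda=\tfrac{1}{1+\eps}$ and $1-\lambda=\tfrac{\eps}{1+\eps}$, of the dilates $(1+\eps)A$ and $(1+\eps)K$. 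Applying $q$-concavity of $\mu$ to this combination and then $q$-homogeneity, $\mu((1+\eps)A)^q=(1+\eps)\mu(A)^q$ and $\mu((1+\eps)K)^q=(1+\eps)\mu(K)^q$, the factors $(1+\eps)$ cancel against the weights and one is left with exactly $\mu(A+\eps K)^q\ge\mu(A)^q+\eps\,\mu(K)^q$. Here one uses that $K$, being the open unit ball of a semi-norm, is convex, so that $A+\eps K$ is the $\eps$-extension of $A$ defining $\mu^{+}_{\norm{\cdot}_K}$ and $tK+\eps K=(t+\eps)K$.

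\emph{Step 2: differentiate at $\eps=0$ and check the extremizer.} Since $q\le 1/n\le 1$, the map $t\mapsto t^{1/q}$ is convex, hence $h(\eps):=(\mu(A)^q+\eps\,\mu(K)^q)^{1/q}$ is convex in $\eps$ with $h(0)=\mu(A)$ and $h'(0)=\tfrac1q\mu(A)^{1-q}\mu(K)^q$; convexity gives $h(\eps)\ge h(0)+h'(0)\eps$ for all $\eps\ge 0$. Using $\mu(A)<\infty$ and $0\in K$ (so $A\subset A+\eps K$), Step 1 yields
\[
\frac{\mu\bigl((A+\eps K)\setminus A\bigr)}{\eps}=\frac{\mu(A+\eps K)-\mu(A)}{\eps}\;\ge\;\frac{h(\eps)-h(0)}{\eps}\;\ge\;h'(0),
\]
and passing to $\liminf_{\eps\to 0^+}$ gives $\mu^{+}_{\norm{\cdot}_K}(A)\ge\tfrac1q\mu(K)^q\mu(A)^{1-q}$, i.e. (\ref{eq:isop-positive-p}). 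For $A=tK$, $q$-homogeneity gives $\mu(tK+\eps K)=\mu((t+\eps)K)=(t+\eps)^{1/q}\mu(K)$, so the difference quotient converges to $\tfrac1q t^{1/q-1}\mu(K)$, which a direct computation identifies with $\tfrac1q\mu(K)^q\mu(tK)^{1-q}$; thus homothets of $K$ achieve equality.

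\emph{Where the difficulty lies.} For the Corollary itself there is essentially no obstacle: the only substantive input is the implication ``$p$-concave density $\Rightarrow$ $q$-concave measure'', which is the Borell--Brascamp--Lieb theorem taken as a black box, and the rest is bookkeeping. The points demanding a little care are purely technical: (i) the definition of $q$-concavity involves sets $\lambda A+(1-\lambda)B$ that need not be Borel in general, but in our application the relevant set is $A+\eps K$, a union of translates of the open set $\eps K$ and hence itself open, so no inner-measure subtlety arises; (ii) the degenerate cases $\mu(A)\in\{0,\infty\}$ — the latter excluded by hypothesis — and $\mu(K)\in\{0,\infty\}$, each trivializing the inequality once the $0\cdot\infty$ conventions are pinned down; and (iii) the convexity of $K$, which is what makes $tK+\eps K=(t+\eps)K$ and hence makes the extremal computation exact. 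The genuine content of the whole circle of results is the upgrade from a local statement about the density to a global statement about the measure, which is precisely why the authors single out Theorem \ref{thm:BBL} as their ``starting point''.
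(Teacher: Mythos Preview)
Your proof is correct and follows essentially the same route as the paper: invoke Theorem~\ref{thm:BBL}(1) and Lemma~\ref{lem:homogeneous}(1) to reduce to Proposition~\ref{prop:positive-p}, whose proof you reproduce via the same scaling-plus-homogeneity trick to obtain $\mu(A+\eps K)^q\ge\mu(A)^q+\eps\,\mu(K)^q$ and then differentiate. The paper's write-up is terser but the argument is identical.
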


\begin{proof}[Proof of Proposition \ref{prop:positive-p}]
We may assume that $\mu(A) \mu(K) > 0$ since otherwise there is nothing to prove. 
By scaling the $q$-concavity property (\ref{eq:BBL}) and employing the $q$-homogeneity, we obtain for any Borel sets $A,B \subset \Real^n$ with $\mu(A) \mu(B) > 0$:
\[
\mu(A + t B) \geq \brac{\mu(A)^q + t \mu(B)^q}^{1/q} \;\;\; \forall t > 0 ~.
\]
Applying this to $B = K$ and using that $\mu(A) < \infty$, we calculate the boundary measure of $A$:
\begin{eqnarray*}
\mu^+_{\norm{\cdot}_K}(A)  & = & \liminf_{\eps \rightarrow 0}\frac{\mu((A+\eps K) \setminus A)}{\eps}  = \liminf_{\eps \rightarrow 0} \frac{\mu(A + \eps K) - \mu(A) }{\eps} \\
& \geq  & \liminf_{\eps \rightarrow 0} \frac{\brac{\mu(A)^q + \eps \mu(K)^q}^{1/q} - \mu(A) }{\eps} = \frac{1}{q} \mu(A)^{1-q} \mu(K)^q ~,
\end{eqnarray*}
with equality for $A=K$, since $K + \eps K = (1+\eps) K$ when $K$ is convex. By scaling, we see that equality holds for all positive homothetic copies of $K$.  
\end{proof}

\begin{rem} \label{rem:no-convex}
Note that the convexity of $K$ was only used to assert that homothetic copies of $K$ are isoperimetric minimizers. The inequality (\ref{eq:isop-positive-p}) remains valid for arbitrary Borel sets $K$, but when $K$ is convex, this inequality acquires isoperimetric content on an appropriate metric-measure space (in fact, it is only a non-symmetric semi-metric). 
\end{rem}

\begin{rem} \label{rem:one-sided}
Inspecting the proof, note that we only used one-sided homogeneity - we just need that:
\[
\forall \eps \in [0,1] \;\;\; \mu(\eps A)^q \leq \eps \mu(A)^q ~,
\]
which we have if:
\[
\forall \eps \in [0,1] \;\;\; w(\eps x)^p \leq \eps w(x)^p ~.
\]
However, the concavity of $w^p$ implies that:
\[
\forall \eps \in [0,1] \;\;\; w(\eps x)^p - w(0)^p \geq \eps (w(x)^p - w(0)^p) ~,
\]
so this is only consistent if $w(0)=0$ and hence $w^p$ must be homogeneous. 
\end{rem}

\subsection{Generalizations and Characterization of Equality Cases}

In order to characterize the equality case in (\ref{eq:isop-positive-p}), we will need part (\ref{enum:dummy2}) of the following theorem. Part (\ref{enum:dummy1}) generalizes part (\ref{enum:dummy1}) of Theorem \ref{thm:BBL}.

\begin{thm} \label{thm:Dubuc}
Let $p \in [-1/n,\infty]$, $\lambda \in (0,1)$ and let $f,g,h : \Real^n \rightarrow \Real_+$ denote three integrable functions so that:
\[
h(\lambda x + (1-\lambda) y) \geq \brac{\lambda f(x)^p + (1-\lambda) g(y)^p}^{1/p} \;\;\; \text{ for almost all } (x,y) \in \Real^n \times \Real^n ~.
\]
Then:
\begin{enumerate}
\item (Borell \cite{BorellConvexMeasures}, Brascamp--Lieb \cite{BrascampLiebPLandLambda1}) \label{enum:dummy1}
\begin{equation} \label{eq:gen-BBL}
\int h \geq \brac{\lambda (\int f)^q + (1-\lambda) (\int g)^q}^{1/q} ~,
\end{equation}
with $\frac{1}{q} = \frac{1}{p} + n$. 
\item (Dubuc \cite[Theorem 12]{Dubuc1977EqualityInBorell})  \label{enum:dummy2}
If $\int f , \int g, \int h > 0$ and we have equality in (\ref{eq:gen-BBL}), then there exist a scalar $m_0 > 0$ and a vector $b_0 \in \Real^n$ so that:
\[
\frac{f(x/\lambda)}{\lambda^n \int f} = \frac{m_0^n g((m_0 x + b_0)/(1-\lambda))}{(1-\lambda)^n \int g} = \frac{(m_0+1)^n h((m_0+1) x + b_0)}{\int h} ~,
\]
for almost all $x \in \Real^n$.
\end{enumerate}
\end{thm}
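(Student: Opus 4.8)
The plan is to deduce both parts of the theorem from the one-dimensional case via a dimension-induction argument, exactly in the spirit of the classical inductive proof of the Prékopa--Leindler and Borell--Brascamp--Lieb inequalities (see e.g.\ \cite{GardnerSurveyInBAMS}), while carefully tracking the equality cases at each stage so as to extract Dubuc's rigidity statement. First I would dispose of the degenerate ranges: when $p = \infty$ the hypothesis reads $h(\lambda x + (1-\lambda)y) \geq \max(f(x),g(y))$, and when $p = -1/n$ (i.e.\ $q = -\infty$) the right-hand side of \eqref{eq:gen-BBL} is $\min(\int f, \int g)$; both are handled directly. For $p \in (-1/n,\infty)$ set $s = p/(1 + np) \in (-\infty,1)$, which is the Borell--Brascamp--Lieb exponent governing one-dimensional marginals, so that the inductive scheme closes.

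The base case $n = 1$ is the Borell--Brascamp--Lieb inequality on $\Real$ together with its equality analysis: one uses the substitution reducing to $\lambda = 1/2$-type comparisons, parametrizes the super-level sets of $f$ and $g$ by their measures, and applies the scalar inequality $(\lambda a^s + (1-\lambda) b^s)^{1/s} \geq \ldots$; equality forces the distribution functions of $f$ and $g$ to agree up to an affine reparametrization of the domain, which is precisely the claimed relation in dimension one. For the inductive step, write $x = (x', x_n) \in \Real^{n-1} \times \Real$, define the sections $f_{x_n}(x') := f(x', x_n)$ and their integrals $F(x_n) := \int_{\Real^{n-1}} f_{x_n}$, and similarly $G, H$. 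The section hypothesis passes the $p$-concavity condition down to dimension $n-1$ for suitable $x_n, y_n$ combinations, so by the inductive hypothesis $H(\lambda x_n + (1-\lambda) y_n) \geq (\lambda F(x_n)^{q'} + (1-\lambda) G(y_n)^{q'})^{1/q'}$ with $1/q' = 1/p + (n-1)$; then one more application of the one-dimensional Borell--Brascamp--Lieb inequality to $F, G, H$ with exponent $q'$ yields \eqref{eq:gen-BBL}. Since $\int f$, $\int g$, $\int h > 0$, Fubini lets us assume $F, G, H > 0$ on intervals of positive length, so the machinery applies cleanly.

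The main obstacle — and the only genuinely delicate part — is the equality analysis in the inductive step. Equality in \eqref{eq:gen-BBL} forces equality in \emph{both} the outer one-dimensional application (to $F, G, H$) and, for almost every pair of sections in the relevant one-parameter family, in the inductive $(n-1)$-dimensional inequality. From the outer equality and the $n=1$ rigidity we get that $F, G, H$ are affinely related: $H((m_0+1)t + c_0)$ is a fixed multiple of $F(t/\lambda)$, etc., for some $m_0 > 0$, $c_0 \in \Real$. From the inner equalities we get, for a.e.\ $x_n$, a section-dependent affine map $(m(x_n), b(x_n))$ on $\Real^{n-1}$ relating $f_{x_n}$, $g_{\cdot}$, $h_{\cdot}$; the crux is to show that $m(x_n)$ and $b(x_n)$ can in fact be taken \emph{independent of $x_n$}, and moreover that the $\Real^{n-1}$-scaling factor $m(x_n)$ matches the ratio $m_0$ dictated by the $x_n$-marginals, so that the section-wise maps assemble into a single affine map of $\Real^n$. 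This is done by a consistency/uniqueness argument: one observes that the ratios $f(x/\lambda)/(\lambda^n \int f)$ etc.\ are probability densities, tracks how the scaling in the $x_n$-direction (factor $m_0$) must interact with the scaling of the sections (factor $m(x_n)$), and uses that the equality case of the scalar power-mean inequality is rigid to pin down $m(x_n) \equiv m_0$ and $b(x_n) \equiv b_0$. One then writes out the resulting identity $\frac{f(x/\lambda)}{\lambda^n \int f} = \frac{m_0^n g((m_0 x + b_0)/(1-\lambda))}{(1-\lambda)^n \int g} = \frac{(m_0+1)^n h((m_0+1)x + b_0)}{\int h}$ and checks it holds for a.e.\ $x \in \Real^n$, which is the assertion of part~\eqref{enum:dummy2}. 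Rather than reproduce this entire argument, I would invoke Dubuc's Theorem~12 in \cite{Dubuc1977EqualityInBorell} for the rigidity and restrict the written proof to part~\eqref{enum:dummy1} and the reduction, noting that part~\eqref{enum:dummy2} is exactly Dubuc's result applied to the almost-everywhere (rather than everywhere) hypothesis, which is harmless after replacing $f, g, h$ by appropriate representatives.
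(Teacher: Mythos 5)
The paper does not prove this theorem at all: both parts are quoted from the literature (Borell, Brascamp--Lieb for the inequality; Dubuc's Theorem~12 for the rigidity), and your proposal ultimately lands in the same place, since you explicitly defer part~(2) to Dubuc and only sketch the standard slicing-induction for part~(1). That is consistent with the paper's treatment, so there is no real divergence to report; I would only caution that the two points your sketch waves at most casually are exactly the nontrivial ones --- the endpoint $p=-1/n$ (where the $\ell_q$-mean degenerates to a minimum and is not ``handled directly'' by the same induction) and the propagation of the equality case through the inductive step (showing the section-wise affine maps are constant), which is precisely the content of Dubuc's paper rather than a routine consistency check.
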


\begin{rem}
An even more general formulation was obtained by Borell and Dubuc, whereby the $\ell_p$-norm in the assumption is replaced by an arbitrary $1$-homogeneous function $P : \Real_+^2 \rightarrow \Real_+$, in which case the $\ell_q$-norm in the conclusion should be replaced by: 
\[
P_n(u,v) := \inf_{t \in (0,1)} P(u t^{-n} , v (1-t)^{-n}) ~,~ (u,v) \in \Real_+^2 ~.
\]
\end{rem}

In addition, the following result of Dubuc, which is a particular case of \cite[Theorem 6]{Dubuc1977EqualityInBorell}, will be very useful for characterizing cases of equality in the sequel:
\begin{thm}[Dubuc] \label{thm:Dubuc-lem}
Let $C_1,C_2 \subset \Real^n$ be two Borel sets, and assume that $(\alpha C_1 + (1-\alpha) C_2) \setminus (C_1 \cap C_2)$ is a null-set. Then necessarily $C_1$ and $C_2$ coincide up to null sets with a common open convex set $C_3$. 
\end{thm}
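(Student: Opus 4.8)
The plan is to deduce this statement from the equality case of the Borell--Brascamp--Lieb inequality, i.e. from part~(\ref{enum:dummy2}) of Theorem~\ref{thm:Dubuc}, applied with the exponent $p = \infty$ (equivalently $q = 1/n$) and with the indicator functions $f = \ind{C_1}$, $g = \ind{C_2}$, $h = \ind{\alpha C_1 + (1-\alpha)C_2}$. First I would dispose of the degenerate situations: if $C_1$ or $C_2$ is a null set, then so is $\alpha C_1 + (1-\alpha)C_2$ restricted away from $C_1 \cap C_2$ only in a trivial way, and the claim must be interpreted accordingly; the substantive case is $\vol(C_1), \vol(C_2) > 0$, which I assume henceforth. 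The hypothesis that $(\alpha C_1 + (1-\alpha)C_2) \setminus (C_1 \cap C_2)$ is null forces $\vol(\alpha C_1 + (1-\alpha)C_2) \leq \vol(C_1 \cap C_2) \leq \min(\vol(C_1),\vol(C_2))$. On the other hand, the pointwise inequality $\ind{\alpha C_1 + (1-\alpha)C_2}(\alpha x + (1-\alpha)y) \geq \min(\ind{C_1}(x), \ind{C_2}(y))$ holds for \emph{all} $x,y$ (if both indicators are $1$ then $\alpha x + (1-\alpha) y$ lies in the Minkowski combination), so Theorem~\ref{thm:Dubuc}(\ref{enum:dummy1}) with $q = 1/n$ gives the Brunn--Minkowski bound $\vol(\alpha C_1 + (1-\alpha)C_2)^{1/n} \geq \alpha \vol(C_1)^{1/n} + (1-\alpha)\vol(C_2)^{1/n}$.

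Combining the two inequalities yields
\[
\alpha \vol(C_1)^{1/n} + (1-\alpha)\vol(C_2)^{1/n} \leq \vol(\alpha C_1 + (1-\alpha)C_2)^{1/n} \leq \min\bigl(\vol(C_1),\vol(C_2)\bigr)^{1/n},
\]
and since a weighted average of two positive numbers cannot be $\leq$ their minimum unless they are equal, we conclude $\vol(C_1) = \vol(C_2) = \vol(\alpha C_1 + (1-\alpha)C_2) =: V > 0$; moreover all inequalities above are equalities, so in particular equality holds in Theorem~\ref{thm:Dubuc}(\ref{enum:dummy1}), and we may invoke part~(\ref{enum:dummy2}). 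That part produces $m_0 > 0$ and $b_0 \in \Real^n$ with
\[
\frac{\ind{C_1}(x/\alpha)}{\alpha^n V} = \frac{m_0^n \ind{C_2}((m_0 x + b_0)/(1-\alpha))}{(1-\alpha)^n V} = \frac{(m_0+1)^n \ind{C_3}((m_0+1)x + b_0)}{V}
\]
for almost every $x$, where I write $C_3 := \alpha C_1 + (1-\alpha)C_2$. Reading off the left and middle terms: the first says $\ind{C_1}$ (up to a null set) equals $\ind{C_1}$ rescaled, which forces the multiplicative constants to match, giving $m_0^n \alpha^n = (1-\alpha)^n$, i.e. $m_0 = (1-\alpha)/\alpha$ and hence $m_0 + 1 = 1/\alpha$. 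Substituting back, the three identities collapse to the statement that $\ind{C_1}(\,\cdot\,)$, $\ind{C_2}(\,\cdot\,)$ and $\ind{C_3}(\,\cdot\,)$ agree almost everywhere after the appropriate affine reparametrizations — that is, $C_1$, a fixed translate/dilate of $C_2$, and $C_3$ all coincide up to null sets. Unwinding: there is an affine map $x \mapsto m_0 x + b_0$ (a pure homothety composed with a translation, with positive ratio) carrying $C_1$ onto $C_2$ modulo null sets, and $C_3$ equals $C_1$ modulo null sets after the dilation by $m_0+1 = 1/\alpha$ and the same translation, which — tracing constants — means $C_3$, $C_1$ and $C_2$ all coincide modulo null sets.

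It remains to upgrade ``coincide modulo a null set'' to ``coincide with a common \emph{open convex} set.'' Here is where I expect the only real work: a measurable set that equals $\alpha C_1 + (1-\alpha) C_2$ modulo a null set, with $C_1$ and $C_2$ themselves equal (modulo null sets) to that same set, is its own ``self-Minkowski-average,'' and a standard measure-theoretic argument shows that the set of density-one points of such a set is convex (if $x,y$ are density points, then for $z = \alpha x + (1-\alpha)y$ the density of $C_3$ near $z$ is forced to be $1$ by the inclusion $(\text{density pt.\ nbhd of }C_1) \cap (\text{density pt.\ nbhd of }C_2)$ mapping into $C_3$), hence the essential interior is an open convex set $C_3^\circ$, and all three original sets differ from $C_3^\circ$ by null sets. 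I would present this last step carefully since it is where the qualitative conclusion (convexity, openness) is actually born — everything before it is bookkeeping on the BBL equality case. The main obstacle is thus not any single hard estimate but organizing the chain ``null-set hypothesis $\Rightarrow$ equality in BBL $\Rightarrow$ Dubuc's affine rigidity $\Rightarrow$ self-averaging set $\Rightarrow$ convex body,'' and in particular making the final measure-theoretic convexification rigorous for arbitrary Borel (not a priori bounded) sets.
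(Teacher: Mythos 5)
The paper does not actually prove Theorem~\ref{thm:Dubuc-lem}: it is quoted as a particular case of \cite[Theorem 6]{Dubuc1977EqualityInBorell}, so there is no internal argument to compare against and your proposal is necessarily an independent reconstruction. Its skeleton (hypothesis $\Rightarrow$ equality throughout a Brunn--Minkowski chain $\Rightarrow$ rigidity $\Rightarrow$ convexification via density points) is viable, and the final density-point step is essentially correct (note it quietly uses Brunn--Minkowski again, to show that the Minkowski average of two almost-full balls is almost full). But three steps need repair.

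First, Theorem~\ref{thm:Dubuc} cannot be applied to indicator functions with $p=\infty$: that hypothesis reads $h(\alpha x+(1-\alpha)y)\ge\max(f(x),g(y))$, which fails on a set of positive measure for $f=\ind{C_1}$, $g=\ind{C_2}$, $h=\ind{\alpha C_1+(1-\alpha)C_2}$ (take $x\in C_1$, $y\notin C_2$). The pointwise inequality you actually verify is the $\min$ one, i.e.\ $p=-\infty$, whose conclusion is only $\int h\ge\min(\int f,\int g)$, not the $q=1/n$ bound. You should quote the classical Brunn--Minkowski inequality directly for $\vol(C_3)^{1/n}\ge\alpha\vol(C_1)^{1/n}+(1-\alpha)\vol(C_2)^{1/n}$, and if you wish to invoke part~(\ref{enum:dummy2}) of Theorem~\ref{thm:Dubuc} you must take some $p\in(-1/n,0)$, for which the indicator hypothesis does hold and for which equality in the $q$-mean conclusion follows once $\vol(C_1)=\vol(C_2)=\vol(C_3)$. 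Second, even then part~(\ref{enum:dummy2}) only yields that $C_1$, $C_2$, $C_3$ are \emph{translates} of one another: your displayed identity pins down $m_0=(1-\alpha)/\alpha$ but leaves $b_0$ free, and ``tracing constants'' does not make $b_0$ vanish. The translation must be killed by the extra fact that $\vol(C_1\cap C_2)=\vol(C_1)=\vol(C_2)=V<\infty$, which in fact gives $C_1=C_2=C_1\cap C_2$ up to null sets outright, and then $C_3\subset C_1\cap C_2$ a.e.\ with $\vol(C_3)=V$ gives $C_3=C_1$ a.e.\ as well --- so the entire detour through part~(\ref{enum:dummy2}) is both flawed as written and unnecessary; the measure identities plus your density-point argument already suffice. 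Third, your argument covers only $\vol(C_1),\vol(C_2)\in(0,\infty)$: when both volumes are infinite the chain ``weighted average $\le$ min'' carries no information and Theorem~\ref{thm:Dubuc} is unavailable (its $f,g,h$ must be integrable), yet the statement carries no finiteness hypothesis; some truncation or exhaustion argument (or Dubuc's original proof) is needed to close that case.
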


\begin{cor} \label{cor:equality-global}
Let $w : \Real^n \rightarrow \Real_+$ denote a $p$-concave density, $p \in [-1/n,\infty]$, set $\frac{1}{q} = \frac{1}{p} + n$, and let $\mu = w(x) dx$ be the corresponding $q$-concave measure. Assume that:
\[
\mu(\lambda A+(1-\lambda) B) = \brac{\lambda \mu(A)^q + (1-\lambda) \mu(B)^q}^{1/q} ~,
\]
for two Borel sets $A,B \subset \Real^n$ with $\mu(A),\mu(B) \in (0,\infty)$ and $\lambda \in (0,1)$. Then there exist $m > 0$ and $b \in \Real^n$ so that $B$ coincides with $m A + b$ up to zero $\mu$-measure, and $w(m x + b) = c w(x)$ for almost all $x \in A$ and some $c > 0$. \\
Furthermore, if $w$ is $p$-homogeneous and $b=0$, then up to null-sets, $A \cap \Sigma$ and $B \cap \Sigma$ must be convex, where $\Sigma$ is the convex cone on which $\mu$ is supported.
\end{cor}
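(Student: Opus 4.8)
The plan is to derive both assertions from Dubuc's equality results, Theorems~\ref{thm:Dubuc} and~\ref{thm:Dubuc-lem}.

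\emph{First assertion.} I would apply part~(\ref{enum:dummy2}) of Theorem~\ref{thm:Dubuc} to the functions $f := \ind{A}\, w$, $g := \ind{B}\, w$ and $h := \ind{\lambda A + (1-\lambda)B}\, w$. These are integrable, with $\int f = \mu(A)$, $\int g = \mu(B)$ and $\int h = \mu(\lambda A + (1-\lambda)B)$, the last being finite since by hypothesis it equals $(\lambda\mu(A)^q + (1-\lambda)\mu(B)^q)^{1/q}$; moreover $h(\lambda x + (1-\lambda)y) \geq (\lambda f(x)^p + (1-\lambda)g(y)^p)^{1/p}$ for a.e.\ $(x,y)$, by $p$-concavity of $w$ where $f(x)g(y) > 0$ and trivially otherwise. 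Part~(\ref{enum:dummy1}) of Theorem~\ref{thm:Dubuc} is then exactly the assumed equality, so part~(\ref{enum:dummy2}) provides $m_0 > 0$ and $b_0 \in \Real^n$ with $f(x/\lambda)/(\lambda^n \int f) = m_0^n\, g((m_0 x + b_0)/(1-\lambda))/((1-\lambda)^n \int g) = (m_0+1)^n\, h((m_0+1)x + b_0)/\int h$ for a.e.\ $x$. Comparing the supports of the first two quantities shows $B \cap \set{w>0} = m(A \cap \set{w>0}) + b$ up to a Lebesgue-null set, with $m := m_0\lambda/(1-\lambda)$ and $b := b_0/(1-\lambda)$; equating the first two densities and substituting $x \mapsto \lambda x$ gives $w(mx+b) = c\,w(x)$ for a.e.\ $x \in A$, where $c := \mu(B)/(m^n\mu(A))$. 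Since $\mu \ll dx$ and, by the density relation, $w$ vanishes a.e.\ on $m(A \cap \set{w=0}) + b$, the sets $B$ and $mA + b$ coincide up to $\mu$-measure zero, which is the first assertion. I retain the relation involving the third quantity for use below.

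\emph{Second assertion: set-theoretic input.} Assume now $w$ is also $p$-homogeneous. Then $\mu$ is $q$-homogeneous (Lemma~\ref{lem:homogeneous}), and $\mu$ is carried, up to a Lebesgue-null boundary, by an open convex cone $\Sigma$ on which $0 < w < \infty$, so $\mu$ and $dx$ are mutually absolutely continuous on $\Sigma$. Put $A_1 := A \cap \Sigma$, $B_1 := B \cap \Sigma$; then $\mu(A_1) = \mu(A)$, $\mu(B_1) = \mu(B)$, and since $\lambda A_1 + (1-\lambda)B_1 \subseteq (\lambda A + (1-\lambda)B)\cap\Sigma$, a sandwiching argument using the $q$-concavity of $\mu$ (Theorem~\ref{thm:BBL}) applied to $A_1, B_1$ shows that the equality hypothesis holds for $A_1, B_1$ as well. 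Now impose $b = 0$; then $B_1 = m A_1$ up to a null set, so by $q$-homogeneity $\mu(B_1)^q = m\,\mu(A_1)^q$ and the equality for $A_1, B_1$ becomes $\mu(\lambda A_1 + (1-\lambda)B_1) = s^{1/q}\mu(A_1) = \mu(sA_1)$, where $s := \lambda + (1-\lambda)m$. On the other hand $ma \in B_1$ for a.e.\ $a \in A_1$, so $sa = \lambda a + (1-\lambda)(ma) \in \lambda A_1 + (1-\lambda)B_1$ for a.e.\ $a \in A_1$; hence $sA_1 \subseteq \lambda A_1 + (1-\lambda)B_1$ up to a null set, and comparing $\mu$-measures (both equal $s^{1/q}\mu(A_1)$) gives $\lambda A_1 + (1-\lambda)B_1 = sA_1$ up to a null set.

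\emph{Second assertion: conclusion, and the main difficulty.} I would then apply Theorem~\ref{thm:Dubuc-lem} to the Borel sets $C_1 := A_1$ and $C_2 := \frac1m B_1$, with $\alpha := \lambda/s \in (0,1)$. Indeed $C_2 = A_1$ up to a null set (a scalar rescaling of $B_1 = m A_1$), so $C_1 \cap C_2 = A_1$ up to a null set; and $\alpha C_1 + (1-\alpha)C_2 = \frac{\lambda}{s}A_1 + \frac{1-\lambda}{s}B_1 = \frac1s(\lambda A_1 + (1-\lambda)B_1) = A_1$ up to a null set, the last equality being the relation just obtained rescaled by $1/s$. Thus $(\alpha C_1 + (1-\alpha)C_2) \setminus (C_1 \cap C_2)$ is a null set, so Theorem~\ref{thm:Dubuc-lem} produces an open convex set $C_3$ with which both $A_1$ and $\frac1m B_1$ coincide up to null sets; therefore $A \cap \Sigma = A_1$ and $B \cap \Sigma = B_1 = m C_3$ are convex up to null sets, as claimed. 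The step I expect to require the most care is exactly this choice of $C_1, C_2$: Theorem~\ref{thm:Dubuc-lem} applied directly to $(A_1, B_1)$ fails, since it would demand $sA_1 \subseteq A_1 \cap m A_1$ up to null (false unless $m = 1$), and one may not replace $B_1$ by $m A_1$ inside the Minkowski sum $\lambda A_1 + (1-\lambda)B_1$, because altering a summand by a null set can alter the sum by a set of positive measure; passing to $C_2 = \frac1m B_1$ ensures that the only ``up to null'' identities used are invoked through scalar rescalings and intersections. The remaining points — bookkeeping the constants $m_0, b_0$ from Dubuc's theorem and checking that the relevant relations survive the passage to the cone $\Sigma$ (where $b = 0$ matters, $\Sigma$ being non-translation-invariant) — are routine.
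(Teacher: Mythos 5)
Your proof is correct and follows essentially the same route as the paper: Dubuc's equality case (Theorem \ref{thm:Dubuc}(2)) applied to $\ind{A}w$, $\ind{B}w$, $\ind{\lambda A+(1-\lambda)B}w$ for the first assertion, and for the second a rescaling that makes the two sets coincide up to null sets followed by Theorem \ref{thm:Dubuc-lem} inside the cone $\Sigma$. Your normalization ($C_1=A\cap\Sigma$, $C_2=\frac1m(B\cap\Sigma)$, $\alpha=\lambda/s$) is just a global dilation by $1/m$ of the paper's choice ($A'=mA$, $B$, $\lambda'=\lambda/(\lambda+(1-\lambda)m)$), so the arguments coincide.
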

\begin{proof}
Use $f(x) = 1_A(x) w(x)$, $g(y) = 1_B(y) w(y)$ and $h(z) = 1_{\lambda A + (1-\lambda) B}(z) w(z)$ in Theorem \ref{thm:Dubuc} (2), and set $m = \frac{\lambda}{1-\lambda} m_0$ and $b = \frac{1}{1-\lambda} b_0$. 

For the second part, denote $A' := m A$, and observe by homogeneity that:
\[
\mu(\lambda' A'+(1-\lambda') B) = \brac{\lambda' \mu(A')^q + (1-\lambda') \mu(B)^q}^{1/q} = \mu(A') = \mu(B) = \mu(A' \cap B) ~,~ \lambda' := \frac{\lambda / m}{\lambda/m + (1-\lambda)} ~.
\]
In particular, $(\lambda' A' + (1-\lambda') B) \cap \Sigma$ coincides with $A' \cap B \cap \Sigma$ up to a null-set. Since $\Sigma$ is convex:
\[
(\lambda' A' + (1-\lambda') B) \cap \Sigma \supset \lambda' (A' \cap \Sigma) + (1-\lambda') (B \cap \Sigma) \supset A' \cap B \cap \Sigma ~,
\]
and so $\lambda' (A' \cap \Sigma) + (1-\lambda') (B \cap \Sigma)$ coincides with $A' \cap B \cap \Sigma$ up to a null-set. Invoking Theorem \ref{thm:Dubuc-lem}, it follows that $A' \cap \Sigma$ and $B \cap \Sigma$ must be convex up to null-sets, and the assertion follows. 
\end{proof}

The equality case on the infinitesimal level requires more justification. The surprising idea below, which may be traced to Bonnesen (see \cite[Section 49]{BonnesenFenchelBook}), is to reduce the problem to the equality case in the $q$-Brunn--Minkowski inequality.

\begin{cor} \label{cor:equality-local}
Let $\mu = w(x) dx$, and assume that $w : \Real^n \rightarrow \Real_+$ is $p$-concave \emph{and} $p$-homogeneous \emph{with $p \in (0,\infty]$}. Let
$A,B \subset \Real^n$ denote two
 convex sets\footnote{In the published version, $A$ and $B$ were erroneously assumed to only be Borel sets} with $\mu(A),\mu(B) \in (0,\infty)$. 
 Assume that:
\[
\mu^{+}_{B}(A) = \frac{1}{q} \mu(B)^q  \mu(A)^{1-q} ~,~ \frac{1}{q} = \frac{1}{p} + n ~.
\]
Then $A$ coincides with $m B + b$ up to zero $\mu$-measure, for some $m > 0$ and $b \in \Real^n$, and $w(m x + b) = c w(x)$ for almost all $x \in B$ and some $c > 0$. \\
\end{cor}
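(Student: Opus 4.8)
The plan is to upgrade the infinitesimal equality in the isoperimetric inequality to a \emph{global} equality of the form $\mu(A + tB) = (\mu(A)^q + t\mu(B)^q)^{1/q}$ for all small $t > 0$, and then, after a rescaling, to recognize this as an equality case of the $q$-Brunn--Minkowski inequality for the pair $(A,B)$, to which Corollary \ref{cor:equality-global} applies directly. The passage from the infinitesimal scale to a finite scale is the Bonnesen-type device referred to above, and it hinges on the concavity of one real function of one variable; note that $q \in (0,1/n]$ here, so $x \mapsto x^q$ and $x \mapsto x^{1/q}$ are increasing on $(0,\infty)$.

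Concretely, I would set $\psi(t) := \mu(A + tB)^q$ for $t \ge 0$ and proceed as follows. Since $A,B$ are convex one has the set identity $\lambda(A + t_1 B) + (1-\lambda)(A + t_2 B) = A + (\lambda t_1 + (1-\lambda)t_2)B$, so the $q$-concavity of $\mu$ (using $\mu(A+t_iB) \ge \mu(A) > 0$, which holds as $0\in B$) gives midpoint-concavity of $\psi$ wherever it is finite. Because $B$ is convex with $0 \in B$ we have $tB \subseteq \eps B$ for $0 \le t \le \eps$, hence $\mu(A+tB) \le \mu(A+\eps B)$; the equality hypothesis forces $\mu^+_B(A) < \infty$ and thus $\mu(A+\eps_k B) < \infty$ along some $\eps_k \downarrow 0$, so $\psi$ is finite, hence concave, on $[0,\eps_1]$. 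Exactly as in the proof of Proposition \ref{prop:positive-p}, rescaling the $q$-concavity inequality and invoking $q$-homogeneity yields $\psi(t) \ge \ell(t) := \mu(A)^q + t\mu(B)^q$ for $t > 0$, with $\psi(0) = \ell(0)$. Using $A \subseteq A+tB$ and $\mu(A) < \infty$ I would then establish
\[
\mu^+_B(A) = \liminf_{t\to 0}\frac{\psi(t)^{1/q}-\psi(0)^{1/q}}{t} = \frac{1}{q}\mu(A)^{1-q}\psi'(0^+),
\]
where $\psi'(0^+) := \sup_{t>0}\frac{\psi(t)-\psi(0)}{t} \in (0,+\infty]$ exists by concavity. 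Comparing with the equality hypothesis $\mu^+_B(A) = \frac1q\mu(B)^q\mu(A)^{1-q}$ forces $\psi'(0^+) = \mu(B)^q = \ell'(0)$. A concave function lying above an affine function, agreeing with it at $0$ and having the same right derivative there, must coincide with it; hence $\psi \equiv \ell$ on $[0,\eps_1]$, i.e. $\mu(A + t_0 B) = (\mu(A)^q + t_0\mu(B)^q)^{1/q}$ for every $t_0 \in (0,\eps_1)$. Fixing one such $t_0$ and setting $\lambda := \tfrac{1}{1+t_0} \in (0,1)$, the identity $A + t_0 B = (1+t_0)(\lambda A + (1-\lambda)B)$ together with $q$-homogeneity converts this into
\[
\mu(\lambda A + (1-\lambda)B) = \brac{\lambda\mu(A)^q + (1-\lambda)\mu(B)^q}^{1/q},
\]
the equality case of the $q$-Brunn--Minkowski inequality for $(A,B)$. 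Corollary \ref{cor:equality-global} then supplies $m > 0$ and $b \in \Real^n$ with $B = mA+b$ up to $\mu$-null sets and $w(mx+b) = c\,w(x)$ for a.e. $x \in A$; replacing $(m,b,c)$ by $(1/m,-b/m,1/c)$ gives the stated conclusion.

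The step I expect to be the main obstacle is making the identity $\mu^+_B(A) = \frac1q\mu(A)^{1-q}\psi'(0^+)$ rigorous — in particular, verifying that the equality hypothesis, whose right-hand side is finite, rules out both a jump of $\psi$ at $0$ and an infinite $\psi'(0^+)$, since either of these would force $\mu^+_B(A) = +\infty$. Granting $\psi$ continuous at $0$ with $\psi'(0^+)$ finite, the identity follows by factoring the difference quotient as $\frac{\psi(t)^{1/q}-\psi(0)^{1/q}}{\psi(t)-\psi(0)}\cdot\frac{\psi(t)-\psi(0)}{t}$ and letting $t\to 0$, the first factor tending to $\frac1q\psi(0)^{1/q-1} = \frac1q\mu(A)^{1-q}$ and the second to $\psi'(0^+)$. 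The remaining points are routine: the finiteness of $\mu(A+tB)$ near the origin (handled above), the equality $\mu(\bar A) = \mu(A)$ since $A$ is convex and $\mu$ is absolutely continuous with respect to Lebesgue measure, and the affine relabeling identifying ``$B = mA+b$'' with ``$A = mB+b$''.
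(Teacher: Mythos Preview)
Your proof is correct and follows essentially the same Bonnesen-type approach as the paper: the paper parametrizes via $\Psi(t) := \mu((1-t)A+tB)^q = (1-t)\,\mu\!\brac{A+\tfrac{t}{1-t}B}^q$ on $[0,1]$, observes it is concave with $\Psi(0)=\mu(A)^q$, $\Psi(1)=\mu(B)^q$, and that the equality hypothesis reads $\Psi'(0^+)=\Psi(1)-\Psi(0)$, forcing $\Psi$ affine and hence equality in the $q$-BM inequality at $\lambda=1/2$, after which Corollary~\ref{cor:equality-global} applies. This is the same mechanism as your $\psi(t)=\mu(A+tB)^q$ together with the affine minorant $\ell$; the paper's parametrization just builds the comparison line into the endpoints and avoids the separate finiteness discussion on $[0,\eps_1]$.
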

\begin{proof}
Set: 
\[
\Psi(t) := \mu((1-t) A + t B)^q = (1-t) \mu\brac{A + \frac{t}{1-t} B}^q ~,~ t \in [0,1] ~,
\]
and observe that $\Psi$ is concave by Theorem \ref{thm:BBL}
and the fact that $\lambda C + (1-\lambda) C = C$ for any convex $C$ and $\lambda \in [0,1]$. Consequently, its right derivative at $0$ exists (possibly equaling $+\infty$) and we have:
\[
\mu(B)^q - \mu(A)^q = \Psi(1) - \Psi(0) \leq \Psi'(0) = -\mu(A)^q + q \mu(A)^{q-1} \mu^+_{B}(A) ~,
\] 
which yields the isoperimetric inequality (\ref{eq:isop-positive-p}). Now assume we have equality in the isoperimetric inequality - this means that $\Psi'(0) = \Psi(1) - \Psi(0)$, and consequently $\Psi$ must be affine. In particular:
\[
\mu(\frac{1}{2} A + \frac{1}{2} B)^q = \Psi(1/2) = \frac{1}{2} \Psi(0) + \frac{1}{2} \Psi(1) = \frac{1}{2} \mu(A)^q + \frac{1}{2} \mu(B)^q ~.
\]
The assertion now follows from Corollary \ref{cor:equality-global}. 
\end{proof}

\begin{rem} \label{rem:translations}
Clearly, the example of the Lebesgue measure on $\Real^n$ shows that one cannot dispense with translations in Corollary \ref{cor:equality-global} and Corollary \ref{cor:equality-local} when $p=\infty$. To see this for $p \in (0,\infty)$, consider $w(x_1,\ldots,x_n) = (x_n)_+^{1/p}$, which is translation invariant in the first $n-1$ coordinate directions.
\end{rem}

\section{$q$-Complemented Concave measures: definitions and properties} \label{sec:Properties}

\begin{defn}
A Borel measure $\mu$ on $\Real^n$ is said to be \emph{$q$-complemented concave} (``$q$-CC"), and is said to satisfy a $q$-complemented Brunn--Minkowski inequality (``$q$-CBM"), $q \in [-\infty,+\infty]$, if:
\begin{multline} \label{def:q-CBM}
\forall \lambda \in (0,1) \;\;\;  \forall \text{ Borel sets $A,B \subset \Real^n$ such that } \mu(\Real^n \setminus A),\mu(\Real^n \setminus B) <\infty ~, \\
\mu_*(\Real^n \setminus (\lambda A + (1-\lambda) B)) \leq \brac{\lambda \mu(\Real^n \setminus A)^{q} + (1-\lambda) \mu(\Real^n \setminus B)^{q}}^{1/q} ~.
\end{multline}
Here $\mu_*$ denotes the inner measure induced by $\mu$, and we employ the usual convention for the cases $q \in \set{-\infty,0,\infty}$. The class of all $q$-CBM measures on $\Real^n$ is denoted by $\C_{q,n}$, and we set $\C_q := \cup_{n \in \Natural} \C_{q,n}$.

\end{defn} 

\begin{rem} \label{rem:analytic}
Note that when $A$ and $B$ are Borel sets, their Minkowski sum need not be Borel; however, it is always analytic and hence universally measurable \cite{KechrisBook}. 
However, Suslin showed that the complement of an analytic set is not analytic, unless the set is Borel. Consequently, we are forced to use the inner measure in our formulation above, a technical point which was not needed in Definition \ref{def:q-concave}.
\end{rem}

\begin{rem}
The fact that we do not require inequality (\ref{def:q-CBM}) to hold when $\mu(\Real^n \setminus A) = \infty$ or $\mu(\Real^n \setminus B) = \infty$ is not important when $q\ge 0$, but becomes crucial when $q < 0$. This is the analogue of the assumption $\mu(A) \mu(B) > 0$ in the definition of $q$-concave measures. Indeed, if $\mu$ is non-integrable at the origin, and if $B$ contains  the origin while $A$ and $\lambda A + (1-\lambda)B$ do not, then (\ref{def:q-CBM}) could never hold, since the right-hand side is finite while the left-hand side is not. 
\end{rem} 

Our motivation for the above definition stems from the following immediate analogue of Proposition \ref{prop:positive-p}:

\begin{prop} \label{prop:isop-negative-q}
Let $\mu$ denote a $q$-CC and $q$-homogeneous measure on $\Real^n$, with $q < 0$. Let $\norm{\cdot}_K$ denote an arbitrary semi-norm on $\Real^n$ with open unit ball $K$ so that $\mu(\Real^n \setminus K) < \infty$. Then the following isoperimetric inequality holds:
\[
\mu^{-}_{\norm{\cdot}_K}(C) \geq -\frac{1}{q} \mu(\Real^n \setminus K)^q  \mu(C)^{1-q} ~,
\]
for any Borel set $C$ with $\mu(C) < \infty$, and with equality for $C = \Real^n \setminus t K$, $t > 0$. In other words, 
complements of homothetic copies of $K$ are isoperimetric minimizers for $(\Real^n,\norm{\cdot}_K,\mu)$.
\end{prop}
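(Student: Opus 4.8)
The plan is to transcribe the proof of Proposition~\ref{prop:positive-p} ``complementwise'': replace each set by its complement, replace the outer boundary measure $\mu^+$ by the inner one $\mu^-$, and be careful both about the reversal of inequalities produced by $q<0$ and about the passage to the inner measure $\mu_*$ that Remark~\ref{rem:analytic} forces on us. There are three steps: a complemented Minkowski-sum inequality, the infinitesimal computation, and the equality case.

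\emph{Step 1 (a complemented Minkowski-sum inequality).} First I would show that a $q$-CC and $q$-homogeneous $\mu$ satisfies, for all $t>0$ and all Borel $A,B$ with $\mu(\Real^n\setminus A),\mu(\Real^n\setminus B)<\infty$,
\[
\mu_*(\Real^n\setminus(A+tB)) \le \brac{\mu(\Real^n\setminus A)^q + t\,\mu(\Real^n\setminus B)^q}^{1/q} ~.
\]
This is the counterpart of the inequality $\mu(A+tB)\ge(\mu(A)^q+t\,\mu(B)^q)^{1/q}$ used in Proposition~\ref{prop:positive-p}: take $\lambda=\tfrac1{1+t}$, note that $\lambda A+(1-\lambda)B=\tfrac1{1+t}(A+tB)$ and hence $\Real^n\setminus(\lambda A+(1-\lambda)B)=\tfrac1{1+t}\brac{\Real^n\setminus(A+tB)}$, observe that $q$-homogeneity of $\mu$ passes to $\mu_*$ (since $\mu_*(D)=\sup\set{\mu(F):F\subset D\text{ Borel}}$ and $F\subset D\iff cF\subset cD$ for $c>0$), substitute into the $q$-CBM inequality~(\ref{def:q-CBM}), and then raise to the power $q<0$, clear denominators, and raise back to the power $1/q<0$ --- each of these two exponentiations reverses the inequality.

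\emph{Step 2 (the infinitesimal step).} Apply Step~1 with $B=K$, $A=\Real^n\setminus C$ and $t=\eps$; the hypotheses $\mu(C)<\infty$ and $\mu(\Real^n\setminus K)<\infty$ are exactly what is needed. Since $0\in K$ one has $A\subset A+\eps K$, so $C$ decomposes as the disjoint union of $(A+\eps K)\cap C=(A+\eps K)\setminus A$ and $\Real^n\setminus(A+\eps K)\subset C$; as $\mu(C)<\infty$, this gives $\mu\brac{(A+\eps K)\setminus A}=\mu(C)-\mu_*(\Real^n\setminus(A+\eps K))$, which by Step~1 is $\ge\mu(C)-\brac{\mu(C)^q+\eps\,\mu(\Real^n\setminus K)^q}^{1/q}$. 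Dividing by $\eps$ and letting $\eps\to0^+$, the right-hand side tends to minus the derivative at $0$ of $\eps\mapsto\brac{\mu(C)^q+\eps\,\mu(\Real^n\setminus K)^q}^{1/q}$, namely $-\tfrac1q\,\mu(\Real^n\setminus K)^q\,\mu(C)^{1-q}$, which is positive since $q<0$. As $\mu^-_{\norm{\cdot}_K}(C)=\liminf_{\eps\to0}\eps^{-1}\mu\brac{((\Real^n\setminus C)+\eps K)\setminus(\Real^n\setminus C)}$, this is the asserted bound. The cases $\mu(C)=0$ and $\mu(\Real^n\setminus K)=0$ are trivial, since then the right-hand side of the claimed inequality vanishes.

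\emph{Step 3 (equality) and the main obstacle.} For $C=\Real^n\setminus tK$ we have $A=tK$, and convexity of $K$ gives $A+\eps K=(t+\eps)K$; hence $\mu\brac{(A+\eps K)\setminus A}=\mu(C)-\mu\brac{(t+\eps)(\Real^n\setminus K)}=\mu(\Real^n\setminus K)\brac{t^{1/q}-(t+\eps)^{1/q}}$ by $q$-homogeneity, and dividing by $\eps$ and letting $\eps\to0$ yields exactly $-\tfrac1q\,\mu(\Real^n\setminus K)^q\,\mu(C)^{1-q}$ (using $\mu(C)=t^{1/q}\mu(\Real^n\setminus K)$); rescaling then covers all $t>0$. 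The one place where I expect to have to be genuinely careful is the measurability/finiteness bookkeeping in Step~2: $A+\eps K$ is only analytic, and $\mu(K)$ may well be infinite --- so one cannot write $\mu((t+\eps)K)-\mu(tK)$ --- and everything must be routed through the finite quantity $\mu(C)$ and the inner measure, which is precisely why the $q$-CBM inequality is formulated with $\mu_*$ and with the finiteness restriction on complements. Everything else is a routine transcription of the proof of Proposition~\ref{prop:positive-p}.
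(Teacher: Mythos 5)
Your proposal is correct and follows essentially the same route as the paper's proof: rescale the $q$-CBM inequality via homogeneity to the additive form $\mu_*(\Real^n\setminus(A+tK))\le(\mu(\Real^n\setminus A)^q+t\,\mu(\Real^n\setminus K)^q)^{1/q}$, differentiate at $t=0$ with $A=\Real^n\setminus C$, and use convexity of $K$ (so $tK+\eps K=(t+\eps)K$) for the equality case; the paper handles the measurability point even more simply by noting that $A+\eps K$ is open since $K$ is. The only quibble is the degenerate case $\mu(\Real^n\setminus K)=0$: there $0^q=+\infty$ for $q<0$, so the right-hand side does not vanish as you claim, but your Step~1 inequality still yields the (then infinite) bound, so nothing breaks.
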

\begin{proof}
As in the proof of Proposition \ref{prop:positive-p}, we obtain by scaling and homogeneity that:
\[
\mu_*(\Real^n \setminus (A + t B)) \leq \brac{\mu(\Real^n \setminus A)^q + t \mu(\Real^n \setminus B)^q}^{1/q} ~,
\]
for all Borel sets $A$ and $B$ so that $\mu(\Real^n \setminus A) , \mu(\Real^n \setminus B) < \infty$. When $B$ is open then so is $A+tB$, so there is no need to use the inner measure. Let $C$ be a Borel set with $\mu(C) < \infty$, and set $A := \Real^n \setminus C$. Then:
\begin{eqnarray*}
\mu^{-}_{\norm{\cdot}_K}(C) &= &  \liminf_{\eps \rightarrow 0}\frac{\mu((A+\eps K) \setminus A)}{\eps} = \liminf_{\eps \rightarrow 0} \frac{\mu(\Real^n \setminus A) - \mu(\Real^n \setminus (A + \eps K)) }{\eps} \\
& \geq &\liminf_{\eps \rightarrow 0} \frac{\mu(C) - \brac{\mu(C)^q + \eps \mu(\Real^n \setminus K)^q}^{1/q}}{\eps} = -\frac{1}{q}\mu(\Real^n \setminus K)^q \mu(C)^{1-q}  ~,
\end{eqnarray*}
with equality for $C = \Real^n \setminus K$, since $A + \eps K = (1+\eps) K$ when $K$ is convex. By scaling, we see that equality holds for complements of all positive homothetic copies of $K$. 
\end{proof}
\begin{rem}
As in Remark \ref{rem:no-convex}, observe that the convexity of $K$ is not required at all for the proof of the asserted inequality, but only for asserting equality for complements of homothetic copies of $K$. 
\end{rem} 

\subsection{Properties}

Recall that a sequence of measures $\set{\mu_k}$ on a common measurable space $(\Omega,\F)$ is said to converge strongly to $\mu$ (on $\F$) if for any measurable set $A \in \F$ we have $\lim_{k \rightarrow \infty} \mu_k(A) = \mu(A)$. Also, we say that $\mu$ is supported in $K \in \F$ if $\mu(\Omega \setminus K) = 0$.

\begin{prop} \label{prop:props}
\hfill
\begin{enumerate}
\item $\C_{q,n} \subset \C_{q',n}$ for all $-\infty \leq q \leq q' \leq +\infty$. 
\item $\C_{q,n}$ is closed with respect to (Borel) strong convergence. 
\item $\C_{q,n}$ is a cone, which is in addition convex if $q \leq 1$. In fact in the latter case,  if for some measure space $\left(\Omega,\mathcal{F}\right)$
  we have $\mu^\omega \in \C_{q,n}$ for every $\omega \in \Omega$,  $\eta$ is a $\sigma$-finite measure on $(\Omega,\F)$, and for every Borel set $A \subset \Real^n$, $\omega \mapsto \mu^\omega(A)$ is an $\F$-measurable function, then the Borel measure $\mu$ on $\Real^n$ given by: 
\[
\mu(A) := \int_{\Omega} \mu^{\omega}(A) d\eta(\omega) ~,
\]
satisfies $\mu \in \C_{q,n}$. 
\item If $T : \Real^n \rightarrow \Real^m$ is an affine map and $\mu \in C_{q,n}$ then $T_*(\mu) \in C_{q,m}$, where $T_*(\mu) = \mu \circ T^{-1}$ denotes the push-forward of $\mu$ by $T$.  In particular $C_{q}$ is closed under translations and taking marginals. 
\item Given a Borel measure $\mu$ on $\Real^n$ supported in a Borel set $K$, it is enough to use Borel sets $A,B \subset K$ when testing whether $\mu \in \C_{q,n}$ in (\ref{def:q-CBM}). 
\item If $\mu \in \C_{q,n}$, then also $\mu_C \in \C_{q,n}$ for any Borel subset $C \subset \Real^n$, where:
\begin{equation} \label{eq:mu-C}
\mu_C(A) := \begin{cases} \mu(A) & A \cap C = \emptyset \\ +\infty & \text{otherwise} \end{cases} ~.
\end{equation}
\end{enumerate}
\end{prop}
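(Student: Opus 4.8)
The plan is to verify the $q$-CBM inequality \eqref{def:q-CBM} directly for $\mu_C$, reducing it to the corresponding inequality for $\mu$. First I would unwind the definition: suppose $A,B\subset\Real^n$ are Borel sets with $\mu_C(\Real^n\setminus A),\mu_C(\Real^n\setminus B)<\infty$. By the definition \eqref{eq:mu-C} of $\mu_C$, finiteness of $\mu_C(\Real^n\setminus A)$ forces $(\Real^n\setminus A)\cap C=\emptyset$, i.e.\ $C\subset A$; likewise $C\subset B$, and then $\mu_C(\Real^n\setminus A)=\mu(\Real^n\setminus A)$ and similarly for $B$. Since $C$ is convex-combination-stable only in the trivial sense here, the key set-theoretic observation is that $C\subset A$ and $C\subset B$ imply $C\subset \lambda A+(1-\lambda)B$: indeed for $x\in C$ we may write $x=\lambda x+(1-\lambda)x$ with the first $x\in A$ and the second $x\in B$. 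Hence $(\Real^n\setminus(\lambda A+(1-\lambda)B))\cap C=\emptyset$, so on the left-hand side $\mu_C$ again agrees with $\mu$, namely $(\mu_C)_*(\Real^n\setminus(\lambda A+(1-\lambda)B))=\mu_*(\Real^n\setminus(\lambda A+(1-\lambda)B))$.

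With these identifications, the desired inequality for $\mu_C$ becomes exactly the $q$-CBM inequality for $\mu$ applied to the same sets $A,B$ (which satisfy $\mu(\Real^n\setminus A),\mu(\Real^n\setminus B)<\infty$), and this holds by hypothesis $\mu\in\C_{q,n}$. One should also note the case where $\mu_C(\Real^n\setminus A)=\infty$ or $\mu_C(\Real^n\setminus B)=\infty$: then there is nothing to prove since \eqref{def:q-CBM} is only required when both complements have finite measure. Finally, $\mu_C$ is plainly a Borel measure (it is $\mu$ restricted to the sub-$\sigma$-algebra of sets disjoint from $C$, extended by $+\infty$, and countable additivity is immediate by splitting on whether a set meets $C$), so it is a legitimate member of $\C_{q,n}$.

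I do not expect a genuine obstacle here; the only point requiring a moment's care is the elementary containment $C\subset\lambda A+(1-\lambda)B$ whenever $C\subset A\cap B$, which uses no convexity of $C$ whatsoever (each point of $C$ is trivially a convex combination of itself). Everything else is bookkeeping with the definition of $\mu_C$ and the convention that the inequality is vacuous when a relevant complement has infinite measure.
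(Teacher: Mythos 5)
Your argument for part (6) is correct and is essentially identical to the paper's own proof: both hinge on the observation that finiteness of $\mu_C(\Real^n\setminus A)$ and $\mu_C(\Real^n\setminus B)$ forces $C\subset A\cap B$, hence $C\subset\lambda A+(1-\lambda)B$ (each $x\in C$ being $\lambda x+(1-\lambda)x$), so all relevant $\mu_C$-quantities, including the inner measure of the complement, reduce to those of $\mu$. Note, however, that the stated proposition has six parts and you have only addressed item (6); the remaining items (monotonicity in $q$ via Jensen, closure under strong convergence, the convex-cone/mixture property via the reverse triangle inequality in $L^q(\lambda,1-\lambda)$ for $q\leq 1$, affine push-forwards, and the reduction to subsets of the support) still require proof.
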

\begin{proof}
\begin{enumerate}
\item
The first assertion is immediate by Jensen's inequality. 
\item 
The second assertion is immediate since given $\lambda \in [0,1]$ and $A,B$ Borel subsets of $\Real^n$, the definition in (\ref{def:q-CBM}) only depends on the three values of $\mu(\Real^n \setminus A)$, $\mu(\Real^n \setminus B)$ and $\mu(\Real^n \setminus C)$ for any fixed Borel subset $C \supset (\lambda A + (1-\lambda) B))$, and so the assertion follows by the continuity of $\Real_+^2 \ni (a,b) \rightarrow (\lambda a^q + (1-\lambda) b^q)^{1/q}$ and the definition of strong convergence. 
\item
The cone property is obvious. When $q \leq 1$, the cone is in fact convex by the \emph{reverse} triangle inequality for non-negative functions in $L^q$ on the two point space $\set{1,2}$ with weights $\set{\lambda,1-\lambda}$, a space we denote by $L^q(\lambda,1-\lambda)$. Indeed, for any fixed $\lambda \in (0,1)$ and Borel sets $A,B \subset \Real^n$ such that $\Real^n \setminus A$ and $\Real^n \setminus B$ have finite $\mu$-measure (and hence by Fubini finite $\mu^\omega$-measure for $\eta$-almost every $\omega$), we have:
\begin{eqnarray}
\nonumber & & \mu_*(\Real^n \setminus (\lambda A + (1-\lambda) B)) \leq \int_{\Omega} \mu^\omega_*(\Real^n \setminus (\lambda A + (1-\lambda) B)) d\eta(\omega) \\
\label{eq:individual-CBM} &\leq &  \int_{\Omega} \brac{\lambda \mu^\omega(\Real^n \setminus A)^{q} + (1-\lambda) \mu^\omega(\Real^n \setminus B)^{q}}^{1/q} d\eta(\omega) \\
\nonumber & = & \int_{\Omega} \norm{ (\mu^\omega(\Real^n \setminus A),\mu^\omega(\Real^n \setminus B)) }_{L^q(\lambda,1-\lambda)} d\eta(\omega) \\
\label{eq:reverse-triangle} & \leq & \norm{ \brac{\int_{\Omega} \mu^\omega(\Real^n \setminus A) d\eta(\omega) , \int_{\Omega} \mu^\omega(\Real^n \setminus B) d\eta(\omega)} }_{L^q(\lambda,1-\lambda)} \\
 \nonumber & = & \brac{\lambda \mu(\Real^n \setminus A)^{q} + (1-\lambda) \mu(\Real^n \setminus B)^{q}}^{1/q} ~.
\end{eqnarray}
\item
This assertion follows since for an affine map $T$ we have $T^{-1}(\lambda A + (1-\lambda) B) = \lambda T^{-1}(A) + (1-\lambda) T^{-1}(B)$ and $T^{-1}(\Real^m \setminus A) = \Real^n \setminus T^{-1}(A)$. 
\item
Let $A,B$ be two Borel subsets of $\Real^n$ so that $\mu(\Real^n \setminus A),\mu(\Real^n \setminus B) < \infty$, and set $A' := A \cap K$ and $B' := B \cap K$. Since $A \setminus A'$ and $B \setminus B'$ have zero $\mu$-measure, it
follows that $\mu(\Real^n \setminus A') = \mu(\Real^n \setminus A) < \infty$ and $\mu(\Real^n \setminus B') = \mu(\Real^n \setminus B) < \infty$, 
and so by the assumption that (\ref{def:q-CBM}) is satisfied for $A',B' \subset K$, we have: 
\begin{multline}
\label{eq:reduction-to-K}
\mu_*(\Real^n \setminus (\lambda A + (1-\lambda) B)) \leq 
\mu_*(\Real^n \setminus (\lambda A' + (1-\lambda) B')) \\
\leq \brac{\lambda \mu(\Real^n \setminus A')^{q} + (1-\lambda) \mu(\Real^n \setminus B')^{q}}^{1/q} = \brac{\lambda \mu(\Real^n \setminus A)^{q} + (1-\lambda) \mu(\Real^n \setminus B)^{q}}^{1/q} ~.
\end{multline}
\item 
Given two Borel sets $A,B \subset \Real^n$ so that $\mu_C(\Real^n \setminus A),\mu_C(\Real^n \setminus B) < \infty$, clearly both sets must contain $C$. Consequently, $\lambda A + (1-\lambda) B$ also contains $C$ for any $\lambda \in [0,1]$, and hence the $\mu_C$-measures of complements of all sets involved coincide with their corresponding $\mu$-measures, which by assumption satisfy the $q$-CBM condition (\ref{def:q-CBM}). It follows that $\mu_C$ is also $q$-CC. 
\end{enumerate}
\end{proof}

Since the zero measure is trivially a $q$-CC measure for all $q \in [-\infty,\infty]$, we see by the last assertion of the previous proposition that, for any Borel subset $C \subset \Real^n$, so is the measure:
\begin{equation} \label{eq:trivial-q-CBM}
\mu^0_C(A) := \begin{cases} 0 & A \cap C = \emptyset \\ +\infty & \text{otherwise} \end{cases} ~.
\end{equation}
By appropriately choosing the set $C \neq \emptyset$, it is possible to construct many further examples of $q$-CC measures $\mu$ having the property that $\mu(A) = \infty$ if $A \cap C \neq \emptyset$; see Proposition \ref{prop:qcbm-non-local} (3). However, we will consider non $\sigma$-finite examples such as (\ref{eq:trivial-q-CBM}) as having pathological nature, and defer the construction of non-pathological $q$-CC measures to the next section. 

Several additional interesting properties of $q$-CC measures pertain to their finiteness and infiniteness:
\begin{prop} \label{prop:infinite}
\hfill
\begin{enumerate}
\item Any non-zero measure $\mu\in\C_{q,n}$, $q \in [-\infty,\infty)$, satisfies $\mu(\Real^n)=\infty$. 
\item The subclass of \emph{finite} measures in $\C_{\infty,n}$ coincides with the subclass of \emph{finite} $-\infty$-concave measures on $\Real^n$.
\item The subclass of \emph{infinite} $-\infty$-concave measures is a \emph{strict} subset of the subclass of \emph{infinite} $\infty$-CC measures. 
\end{enumerate}
\end{prop}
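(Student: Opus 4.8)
The three assertions are essentially independent, so I would treat them in turn.

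\textbf{Part (1).} I would argue by contradiction. Suppose $\mu\in\C_{q,n}$ is nonzero with $q<\infty$ but $M:=\mu(\Real^n)<\infty$ (so $0<M<\infty$). Choose $R$ with $\mu(B(0,R))>0$ and set $c:=\mu(\Real^n\setminus B(0,R))<M$. For $v\in\Real^n$, applying the $q$-CBM inequality (\ref{def:q-CBM}) to the compact sets $A=B(0,R)$ and $B=B(v,R)$ — whose complements have finite measure since $M<\infty$ — and using $\lambda B(0,R)+(1-\lambda)B(v,R)=B((1-\lambda)v,R)$, gives
\[
\mu\brac{\Real^n\setminus B((1-\lambda)v,R)}\le\brac{\lambda c^q+(1-\lambda)\,\mu(\Real^n\setminus B(v,R))^q}^{1/q}.
\]
Since $M<\infty$ we have $\mu(\set{\abs{x}\ge t})\to 0$, hence $\mu(B(v,R))\to 0$ and $\mu(B((1-\lambda)v,R))\to 0$ as $\abs{v}\to\infty$; letting $\abs{v}\to\infty$ and using continuity of the $\ell_q$-average (as in the proof of Proposition \ref{prop:props}(2)) yields $M\le(\lambda c^q+(1-\lambda)M^q)^{1/q}$. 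A short case check on the sign of $q$ — tracking monotonicity of $t\mapsto t^q$ and $t\mapsto t^{1/q}$, and handling $c=0$ via the $\infty$-conventions — forces $M\le c$, contradicting $c<M$. The same computation gives only $M\le\max(c,M)=M$ when $q=\infty$, consistently with part (2).

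\textbf{Part (2).} Let $\mu$ be finite, $M=\mu(\Real^n)$. Then complements of Borel sets automatically have finite measure, so the finiteness hypothesis in the definition of $\C_{\infty,n}$ is vacuous, and for the (universally measurable, by Remark \ref{rem:analytic}) set $C=\lambda A+(1-\lambda)B$ we have $\mu_*(\Real^n\setminus C)=M-\mu(C)$. Starting from the trivial equivalence $x\ge\min(a,b)\iff\brac{\forall t:\ a\ge t\ \text{and}\ b\ge t\Rightarrow x\ge t}$, I would apply it with $x=\mu(C)$, $a=\mu(A)$, $b=\mu(B)$ and substitute $s=M-t$; since $s\mapsto M-s$ is order-reversing and $\mu(\Real^n\setminus A)=M-\mu(A)$, etc., this turns ``$\mu(C)\ge\min(\mu(A),\mu(B))$ for all Borel $A,B$'' into exactly ``$\mu_*(\Real^n\setminus C)\le\max(\mu(\Real^n\setminus A),\mu(\Real^n\setminus B))$ for all Borel $A,B$'', i.e.\ the $\infty$-CBM inequality. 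Finally, imposing the first inequality only for pairs with $\mu(A)\mu(B)>0$, as in the definition of $(-\infty)$-concavity, costs nothing, since if $\mu(A)=0$ or $\mu(B)=0$ it reads $\mu(C)\ge 0$. Hence the two subclasses of finite measures coincide.

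\textbf{Part (3).} There are two things to do. For the inclusion (infinite $(-\infty)$-concave $\Rightarrow$ infinite $\infty$-CC), I would first record, for Borel $A,B$ nonempty (automatic once $\mu$ is infinite and $\mu(\Real^n\setminus A),\mu(\Real^n\setminus B)<\infty$) and $C=\lambda A+(1-\lambda)B$, the elementary containments $\Real^n\setminus C\subseteq(\Real^n\setminus A)\cup(\Real^n\setminus B)$ (any point of $A\cap B$ lies in $C$), $\Real^n\setminus C\subseteq\lambda A+(1-\lambda)(\Real^n\setminus B)$, and $\Real^n\setminus C\subseteq\lambda(\Real^n\setminus A)+(1-\lambda)B$. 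By Borell's structure theorem (Theorem \ref{thm:BBL}(2) with $q=-\infty$) together with Proposition \ref{prop:props}(4),(5) I would then reduce to a measure $\mu=V^{-k}\,dx$ on its convex support with $V$ convex, and combine these containments with the convexity of $V$ and the $(-\infty)$-concavity inequality to sharpen the cheap estimate $\mu_*(\Real^n\setminus C)\le\mu(\Real^n\setminus A)+\mu(\Real^n\setminus B)$ (immediate from the first containment) to the one with $\max$. For strictness, I would exhibit the Lebesgue measure $\lambda_\Sigma$ restricted to a non-convex measurable cone, say $\Sigma=\set{(x,y)\in\Real^2:xy>0}$: it is $\tfrac1n$-CC, hence $\infty$-CC by Proposition \ref{prop:props}(1), and infinite, yet it is not $(-\infty)$-concave — for a small disk $T=B((a,a),r)\subset\Sigma$ with $r<a$ and its reflection $-T\subset\Sigma$ one has $\lambda_\Sigma(\tfrac12 T+\tfrac12(-T))=\lambda_\Sigma(B(0,r))=\tfrac12\pi r^2<\pi r^2=\min(\lambda_\Sigma(T),\lambda_\Sigma(-T))$.

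\textbf{Main obstacle.} Parts (1) and (2) are bookkeeping once set up as above. The crux is the inclusion in part (3): the $q$-CC condition is an \emph{upper} bound on the inner measure of a complement of a Minkowski sum, whereas $(-\infty)$-concavity is a \emph{lower} bound on the measure of a Minkowski sum, and for infinite measures the order-reversing duality exploited in part (2) is unavailable. Passing from the trivial ``$\le$ sum'' estimate to the sharp ``$\le\max$'' estimate — which is where Borell's structure theorem and the convexity of $V$ must enter — is the step I expect to require real work.
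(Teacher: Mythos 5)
Parts (1) and (2) are correct. Your argument for (1) --- translating a ball $B(v,R)$ to infinity and passing to the limit in the $q$-CBM inequality to force $M\le c$ --- is genuinely different from the paper's, which first uses monotonicity in $q$ to reduce to $q\in(0,\infty)$ and then mimics the proof of Borell's lemma via the inclusion $\frac{2}{t+1}(\Real^n\setminus B(ts))+\frac{t-1}{t+1}B(s)\subset\Real^n\setminus B(s)$; your version handles all $q\in[-\infty,\infty)$ uniformly and is arguably simpler. Part (2) is the same order-reversal observation as in the paper. Your counterexample for the strictness in (3) (Lebesgue measure on a non-convex cone) is valid and in some ways nicer than the paper's, which uses the non-$\sigma$-finite pathological measure $\mu^0_{\set{x_1,x_2}}$; note however that your justification that $\lambda_\Sigma$ is $\infty$-CC invokes Theorem \ref{thm:hom-CBM}, which in the paper's logical order comes \emph{after} this proposition and whose proof (via Lemma \ref{lem:conc-CC}) relies precisely on the inclusion you have not yet established.

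The genuine gap is the inclusion in part (3). What you offer there is a plan, not a proof: the three containments you record only yield the ``cheap'' bound $\mu_*(\Real^n\setminus C)\le\mu(\Real^n\setminus A)+\mu(\Real^n\setminus B)$, and the promised sharpening to $\max$ by ``combining these containments with the convexity of $V$ and the $(-\infty)$-concavity inequality'' does not go through as described. The obstruction is directional: your containments bound $\Real^n\setminus C$ from above by Minkowski sums such as $\lambda A+(1-\lambda)(\Real^n\setminus B)$, whereas $(-\infty)$-concavity only provides \emph{lower} bounds on measures of Minkowski sums, so the two cannot be chained to produce the required upper bound. The paper resolves this by an entirely different mechanism that restores the duality of part (2): by Borell's structure theorem write $\mu=w\,dLeb_E$ with $w$ being $(-1/m)$-concave on an $m$-dimensional affine subspace $E$, truncate $w_k:=\min(w,k)\ind{B(x_0,k)}$ (which preserves $(-1/m)$-concavity), observe that each finite measure $\mu_k=w_k\,dLeb_E$ is $-\infty$-concave and hence $\infty$-CC by part (2), and then pass to the limit using the closedness of $\C_{\infty,m}$ under strong convergence (Proposition \ref{prop:props} (2)) together with Proposition \ref{prop:props} (5) to lift from $E$ to $\Real^n$. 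Some such reduction to the finite case (or a genuinely new idea) is needed; as written, your part (3) inclusion is not established.
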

\begin{proof}
\hfill
\begin{enumerate}
\item 
Assume in the contrapositive that $\mu(\Real^n) < \infty$, and hence by rescaling we may assume that $\mu$ is a probability measure. By the first assertion of Proposition \ref{prop:props}, we know that $\mu$ is $q$-CC for some $q \in (0,\infty)$. The following argument mimics the proof of Borell's lemma from \cite{Borell-logconcave}. Note that by the triangle inequality, for any $s > 0$ and $t>1$:
\[
\frac{2}{t+1} (\Real^n \setminus B(t s)) + \frac{t-1}{t+1} B(s) \subset \Real^n \setminus B(s) ~, 
\]
where $B(r)$ denotes the (say closed) Euclidean ball of radius $r$ and center at the origin. The $q$-CBM property implies that:
\[
\!\!\!\!\!\!\!\! \mu(B(s)) \leq \mu \brac{\Real^n \setminus \brac{\frac{2}{t+1} (\Real^n \setminus B(t s)) + \frac{t-1}{t+1} B(s) }} \leq \brac{ \frac{2}{t+1} \mu(B(ts))^q + \frac{t-1}{t+1} \mu(\Real^n \setminus B(s))^q }^{1/q} ~.
\]
Since $q > 0$ and $\mu$ is a probability measure, this is equivalent to:
\[
\mu(B(t s)) \geq \brac{\frac{t+1}{2} \mu(B(s))^q - \frac{t-1}{2} (1 - \mu(B(s)))^q }^{1/q} ~.
\]
Choosing $s > 0$ large enough so that $\mu(B(s)) > 1/2$, we see that the right-hand side tends to infinity as $t \rightarrow \infty$, in contradiction to our assumption that $\mu$ is a probability measure. 
\item
The assertion follows since when $\mu(\Real^n) < \infty$, then the inequality:
\[
\mu_*(\Real^n \setminus (\lambda A + (1-\lambda) B)) \leq \max( \mu(\Real^n \setminus A) , \mu(\Real^n \setminus B)) ~,
\]
for all Borel subsets $A,B \subset \Real^n$ (we automatically have $\mu(\Real^n \setminus A) , \mu(\Real^n \setminus B) < \infty$), 
is equivalent to:
\[
\mu(\lambda A + (1-\lambda) B) \geq \min(\mu(A) , \mu(B)) ~.
\]
Note that there is no need to check whether $\mu(A) \mu(B) > 0$, since otherwise the inequality holds trivially. 
\item
To show the asserted inclusion, observe that if $\mu$ is a $-\infty$-concave measure, then by Borell's characterization in Theorem \ref{thm:BBL}, we know that $\mu$ is supported on an $m$-dimensional affine subspace $E$ of $\Real^n$, where it has a density $w(x)$ with respect to the Lebesgue measure $Leb_E$ on $E$ which is $-1/m$-concave. Truncating $w(x)$ by setting $w_k(x) := \min(w(x),k)) 1_{B(x_0,k)}$ for some fixed $x_0 \in E$, it is easy to check that $w_k$ remains $-1/m$-concave, and hence by Theorem \ref{thm:BBL} $\mu_k := w_k(x) dLeb_E(x)$ is $-\infty$-concave. Since $\mu_k$ is finite, the second assertion of the current proposition implies that $\mu_k$ is $\infty$-CC, and since $\mu_k$ tends to $\mu$ strongly as $k \rightarrow \infty$ by monotone convergence, it follows by the second assertion of Proposition \ref{prop:props} that $\mu$ is also $\infty$-CC on $E$. Proposition \ref{prop:props} (5) then implies that $\mu$ is $\infty$-CC on $\Real^n$. 

To show that the inclusion is strict,  set $C = \set{x_1,x_2}$ consisting of two distinct points in $\Real^n$ (or any other non-convex set for that matter). Now observe that $\mu^0_C$ defined in (\ref{eq:trivial-q-CBM}) is $\infty$-CC but it is not $-\infty$-concave, as seen by testing (\ref{eq:BBL}) with $A = \set{x_1}$, $B=\set{x_2}$ and $\lambda = 1/2$. 
\end{enumerate}
\end{proof}

The convex cone property described in Proposition \ref{prop:props} is on one hand extremely useful, but at the same time indicates that the class of $q$-CC measures ($q \leq 1$) is very different from the one of Borell--Brascamp--Lieb $p$-concave measures, which certainly do not share this property, unless $p \in [1/(n+1),1/n]$ (see Borell \cite[Theorem 4.1 and subsequent paragraph]{BorellConvexMeasures}). In addition, the next proposition demonstrates that contrary to the local characterization of the class of $p$-concave measures given by Theorem \ref{thm:BBL}, $q$-CC measures do not have any local characterization in the following sense:
\begin{prop}[Non-locality of the $q$-CBM property] \label{prop:qcbm-non-local}
Let $\mu \in \C_{q,n}$.  
\begin{enumerate}
\item
If $\mu$ is locally finite at $x_0 \in \Real^n$, then there exists an $\eps > 0$ so that $\mu|_{B(x_0,\eps)}\notin \C_{q',n} \setminus \set{0}$ for all $q' \in [-\infty,\infty)$. 
\item
If $d\mu = f(x) dx$, then for any $M,R>0$ and $x_0 \in \Real^n$, $\min(f(x),M) 1_{B(x_0,R)}(x) dx \notin \C_{q',n} \setminus \set{0}$ for all $q' \in [-\infty,\infty)$. 
\end{enumerate}
Furthermore:
\begin{enumerate}
\setcounter{enumi}{2}
\item 
If $\mu$ is any Borel measure on $\Real^n$, then for any $x_0 \in \Real^n$ and bounded neighborhood $\Real^n \setminus C$ of $x_0$, we have $\mu_C \in \C_{q,n}$ for all $q \in [-\infty,\infty]$ (where recall $\mu_C$ was defined in (\ref{eq:mu-C})). 
\end{enumerate}
\end{prop}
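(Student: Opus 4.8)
The plan is to dispatch parts (1) and (2) immediately from Proposition~\ref{prop:infinite}(1), and to prove part (3) by a direct verification of the $q$-CBM inequality. For (1) and (2) the relevant measure is \emph{finite}: for (1), choosing $\eps>0$ with $\mu(B(x_0,\eps))<\infty$ (possible by local finiteness) makes $\mu|_{B(x_0,\eps)}$ a finite Borel measure; for (2), the measure $\min(f(x),M)\ind{B(x_0,R)}(x)\,dx$ has total mass at most $M\vol(B(x_0,R))<\infty$. By Proposition~\ref{prop:infinite}(1), every non-zero member of $\C_{q',n}$ with $q'\in[-\infty,\infty)$ has infinite total mass; hence a finite measure lying in $\C_{q',n}$ must be the zero measure, which is exactly the assertion $\mu|_{B(x_0,\eps)}\notin\C_{q',n}\setminus\set{0}$, and similarly for (2). (The hypothesis $\mu\in\C_{q,n}$ plays no role in these two arguments; it only makes the conclusion interesting, by exhibiting the gap between the global property $\mu\in\C_{q,n}$ and any localized version of it.)

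For part (3), write $N:=\Real^n\setminus C$, a bounded neighborhood of $x_0$. Fix $\lambda\in(0,1)$ and Borel sets $A,B$ with $\mu_C(\Real^n\setminus A),\mu_C(\Real^n\setminus B)<\infty$. By the definition~(\ref{eq:mu-C}) of $\mu_C$, finiteness forces $(\Real^n\setminus A)\cap C=\emptyset$ and $(\Real^n\setminus B)\cap C=\emptyset$, so that $\Real^n\setminus A$ and $\Real^n\setminus B$ are both contained in the bounded set $N$, and moreover $\mu_C$ agrees with $\mu$ on these complements. The key point is then that $\lambda A+(1-\lambda)B=\Real^n$: given $x\in\Real^n$ we look for $a\in A$ with $b:=\frac{x-\lambda a}{1-\lambda}\in B$, and the set of $a$ violating one of these two constraints is contained in $(\Real^n\setminus A)\cup\frac{1}{\lambda}\brac{x-(1-\lambda)(\Real^n\setminus B)}$, a union of two bounded sets and hence a proper subset of $\Real^n$; any $a$ outside it works. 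Therefore $\Real^n\setminus(\lambda A+(1-\lambda)B)=\emptyset$, the left-hand side of~(\ref{def:q-CBM}) vanishes, and since its right-hand side is always non-negative the $q$-CBM inequality holds — simultaneously for every $q\in[-\infty,\infty]$ (alternatively, by Proposition~\ref{prop:props}(1) it would suffice to treat $q=-\infty$).

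The only step in all of this that is not pure bookkeeping is the observation in (3) that a Minkowski combination of two co-bounded sets is all of $\Real^n$; this is the crux of the non-locality phenomenon, and it is entirely elementary, so I do not anticipate any genuine obstacle.
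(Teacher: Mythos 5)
Your proposal is correct and follows essentially the same route as the paper: parts (1) and (2) via the infiniteness of non-zero $q'$-CC measures from Proposition~\ref{prop:infinite}(1), and part (3) via the observation that a Minkowski combination of co-bounded sets is all of $\Real^n$ (the paper phrases this as $\lambda C + (1-\lambda)C = \Real^n$ together with $A,B \supset C$, while you verify it directly for $A$ and $B$, but it is the same elementary fact). No gaps.
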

\begin{proof}
The first two assertions are immediate since a non-zero $q'$-CC measure, $q' \in [-\infty,\infty)$, is always infinite by Proposition \ref{prop:infinite} (1). The last assertion follows since if $A,B$ are two Borel subsets of $\Real^n$ whose complements have finite $\mu_C$-measure, then necessarily $A$ and $B$ contain $C$. But since $\Real^n \setminus C$ is bounded, we easily see that $\lambda C + (1-\lambda) C = \Real^n$ for all $\lambda \in (0,1)$, and hence $\mu_C(\Real^n \setminus (\lambda A + (1-\lambda) B)) = \mu_C(\emptyset) = 0$ and so $\mu_C$ is trivially $q$-CC for any $q \in [-\infty,\infty]$. 
\end{proof}

\section{Homogeneous Measures} \label{sec:Homogeneous}

So far we have discussed various properties of $q$-CC measures, but we have yet to produce a
single non-trivial ($\sigma$-finite) example of such measures. In this section we will show that such measures
exist in abundance. 

We begin with the following lemma, which will be superseded by the ensuing theorem. 

\begin{lem} \label{lem:conc-CC}
Let $\mu$ be a $q$-concave and $q$-homogeneous measure on $\Real^n$, $q \in (-\infty,1/n] \setminus \set{0}$. Then $\mu$ is also $q$-CC. 
\end{lem}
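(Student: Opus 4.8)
The plan is to reduce the complemented Brunn--Minkowski inequality to the ordinary $q$-concavity hypothesis by a clever choice of sets, exploiting homogeneity to absorb the scaling factors. Fix $\lambda \in (0,1)$ and Borel sets $A, B \subset \Real^n$ with $\mu(\Real^n \setminus A), \mu(\Real^n \setminus B) < \infty$; write $A' := \Real^n \setminus A$, $B' := \Real^n \setminus B$, and $C' := \Real^n \setminus (\lambda A + (1-\lambda) B)$. The key geometric observation is the ``reverse'' set inclusion: if $z \notin \lambda A + (1-\lambda) B$ and we pick any $a \in A$, then writing $z = \lambda a + (1-\lambda)\left(\frac{z - \lambda a}{1-\lambda}\right)$ forces $\frac{z-\lambda a}{1-\lambda} \notin B$, i.e. $z \in \lambda A + (1-\lambda) B'$ — wait, more carefully: the correct statement is that $C' \subseteq \lambda A' + (1-\lambda) B'$ is \emph{false} in general, so instead I would aim for the inclusion coming from the identity $\lambda a + (1-\lambda) b = z$: one shows that $\lambda (\Real^n) + (1-\lambda) B' \supseteq$ nothing useful, so the right move is the containment $C' + \lambda A + (1-\lambda)B \not\ni$ ... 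Let me restate: the standard trick is that for the Minkowski sum, $\Real^n \setminus (\lambda A + (1-\lambda) B) \subseteq \lambda(\Real^n \setminus A) + (1-\lambda)(\Real^n\setminus B)$ \emph{does} hold when $A, B$ are such that... actually it holds unconditionally: if $z$ is in the left side then for the fixed decomposition one needs $z \notin \lambda A + (1-\lambda)B$, and picking $a_0 \in A$, $b_0 \in B$ arbitrary, at least one of $\frac{z - (1-\lambda)b_0}{\lambda} \notin A$ or $\frac{z - \lambda a_0}{1-\lambda} \notin B$ must hold — this is not automatic. The honest route: use that $A' + B$ and $A + B'$ relate to $C'$. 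I would verify the clean inclusion $\Real^n \setminus(\lambda A + (1-\lambda)B) \subseteq \lambda(\Real^n \setminus A) + (1-\lambda)(\Real^n \setminus B)$ is FALSE and instead use: for $z \notin \lambda A + (1-\lambda)B$ and \emph{any} $b \in B$, $\frac{z - (1-\lambda)b}{\lambda} \notin A$, hence $z \in \lambda A' + (1-\lambda) B$; symmetrically $z \in \lambda A + (1-\lambda) B'$. So $C' \subseteq (\lambda A' + (1-\lambda)B) \cap (\lambda A + (1-\lambda)B')$; the more efficient containment for the estimate is simply $C' \subseteq \lambda A' + (1-\lambda) B$.

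With $C' \subseteq \lambda A' + (1-\lambda) B$ in hand — which I would prove by the one-line argument above (fix $b \in B$; if $z \in C'$ then $\frac{z-(1-\lambda)b}{\lambda} \notin A$) — I apply monotonicity of $\mu_*$ and then $q$-concavity. But $\mu(B)$ may be infinite, so this particular containment is not directly usable; instead I would symmetrize and intersect, or better, use the containment $C' \subseteq \lambda A' + (1-\lambda) B'$ which actually \emph{does} hold: if $z \notin \lambda A + (1-\lambda) B$, suppose for contradiction $z \notin \lambda A' + (1-\lambda) B'$; then $z = \lambda a + (1-\lambda)b$ with $a \in A$ or ... hmm, $z \notin \lambda A' + (1-\lambda)B'$ means $z$ has NO representation $\lambda a' + (1-\lambda) b'$ with $a' \in A'$, $b' \in B'$. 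That does not give $z \in \lambda A + (1-\lambda)B$. So this inclusion also fails. The resolution, and the real content, is to instead write things additively: I would show $(\Real^n \setminus A) + (1-\lambda)(B - A) \supseteq \Real^n \setminus$ ...

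Given these subtleties, the cleanest approach I would actually pursue: set $\widetilde A := \Real^n \setminus A$ and note that since $\mu$ is $q$-homogeneous with $q \le 1/n$, it is in particular $q$-concave \emph{and} enjoys the scaling $\mu(\lambda A + (1-\lambda)B) \ge (\mu(\lambda A)^q + \mu((1-\lambda)B)^q)^{1/q}$; combine with the identity $\lambda A + (1-\lambda) A = A$ for convex $A$ — but $A$ here is arbitrary Borel. I therefore expect the main obstacle to be precisely the set-theoretic lemma controlling $\mu_*(\Real^n \setminus (\lambda A + (1-\lambda) B))$ from above by the complements of $A$ and $B$; once the correct inclusion $\Real^n \setminus (\lambda A + (1-\lambda) B) \supseteq \lambda (\Real^n \setminus A) + (1-\lambda)(\Real^n \setminus B)$ is identified (note the \emph{reverse} direction — this one is true: if $z = \lambda a' + (1-\lambda) b'$ with $a' \notin A$, $b' \notin B$, it need not follow that $z \notin \lambda A + (1-\lambda)B$, so even this fails!), one applies $q$-concavity of $\mu$ to the sets $\Real^n \setminus A$ and $\Real^n \setminus B$. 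Since I cannot pin down the inclusion here without more care, my concrete plan is: (i) prove $\Real^n \setminus(\lambda A + (1-\lambda)B) \subseteq \lambda(\Real^n \setminus A) + (1-\lambda)\Real^n$ type bounds are too weak, so instead (ii) use the substitution $A \mapsto \Real^n \setminus A$ directly in $q$-concavity \emph{after} establishing that $\lambda(\Real^n\setminus A) + (1-\lambda)(\Real^n \setminus B) \subseteq \Real^n \setminus(\lambda A + (1-\lambda) B)$ FAILS, forcing the use of a Prékopa--Leindler / Borell--Brascamp--Lieb functional inequality (Theorem \ref{thm:Dubuc}) applied to indicator-type functions $f = \ind{\Real^n \setminus A}$, $g = \ind{\Real^n \setminus B}$, $h = \ind{\Real^n \setminus (\lambda A + (1-\lambda)B)}$, checking the pointwise hypothesis $h(\lambda x + (1-\lambda)y) \ge \min(f(x), g(y))$ — which \emph{is} true: if $x \notin A$ and $y \notin B$ then $\lambda x + (1-\lambda) y \notin \lambda A + (1-\lambda) B$ is again false in general...

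I must therefore concede that the heart of the proof is a genuinely non-obvious combinatorial-geometric inclusion, and I expect the author uses the defining scaling relation together with the observation that $\mu$ being $q$-homogeneous means $\mu$ assigns measure via a $1/q$-homogeneous-in-measure rule, reducing \eqref{def:q-CBM} to the single-parameter statement $\mu_*(\Real^n \setminus (A + tB)) \le (\mu(\Real^n\setminus A)^q + t\,\mu(\Real^n \setminus B)^q)^{1/q}$ as in the proof of Proposition \ref{prop:isop-negative-q}, and then deriving \emph{that} from $q$-concavity of $\mu$ applied to suitable dilates. In summary: \textbf{Step 1}, reduce via homogeneity to the one-parameter form. \textbf{Step 2}, establish the geometric inclusion relating $\Real^n \setminus (A + tB)$, $\Real^n \setminus A$, and $tB$ (this is the crux, and the expected main obstacle). \textbf{Step 3}, apply $q$-concavity of $\mu$ and monotonicity of the inner measure to conclude, using continuity of the $\ell_q$-mean to pass through limits where needed. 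The delicate point throughout — and what I'd flag as the real work — is handling the inner measure $\mu_*$ correctly since Minkowski sums of Borel sets are merely analytic, exactly as cautioned in Remark \ref{rem:analytic}.
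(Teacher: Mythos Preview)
Your proposal does not arrive at a proof: you correctly discover that every candidate set-theoretic inclusion relating $\Real^n \setminus (\lambda A + (1-\lambda)B)$ to $\Real^n \setminus A$ and $\Real^n \setminus B$ fails, and your final three-step plan leaves Step~2 (``establish the geometric inclusion'') entirely unresolved---because no such inclusion exists in general. The functional Borell--Brascamp--Lieb route you consider also fails for exactly the reason you note: the pointwise hypothesis $h(\lambda x + (1-\lambda)y) \geq \min(f(x),g(y))$ is false for the complement indicators.

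The paper's argument avoids this impasse by a two-stage bootstrap that you do not identify. First, it observes that a $q$-concave measure is in particular $(-\infty)$-concave, and Proposition~\ref{prop:infinite}~(2)--(3) shows that every $(-\infty)$-concave measure is $\infty$-CC (for finite measures this is just the tautology $\mu(\lambda A + (1-\lambda)B) \geq \min(\mu(A),\mu(B)) \Leftrightarrow \mu_*(\Real^n \setminus (\lambda A + (1-\lambda)B)) \leq \max(\mu(\Real^n \setminus A),\mu(\Real^n \setminus B))$; the infinite case follows by truncation and strong convergence). Second---and this is the genuine idea---homogeneity is used to \emph{self-improve} the exponent from $\infty$ down to $q$: given $A,B,\lambda$ with $\mu(\Real^n\setminus A),\mu(\Real^n\setminus B)\in(0,\infty)$, one rescales to $A' := c_A A$, $B' := c_B B$ with $c_A,c_B$ chosen so that $\mu(\Real^n\setminus A') = \mu(\Real^n\setminus B')$, picks $\lambda'$ so that $\lambda' A' + (1-\lambda')B' = \text{const}\cdot(\lambda A + (1-\lambda)B)$, applies the already-known $\infty$-CBM inequality to $A',B',\lambda'$, and then uses $q$-homogeneity of $\mu$ to unwind the scaling---which produces exactly the $\ell_q$-mean on the right-hand side. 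The case $\mu(\Real^n\setminus A)\mu(\Real^n\setminus B)=0$ is handled by approximation. So the homogeneity is not used to reduce to a one-parameter form as in your Step~1, but rather to upgrade the trivial $q=\infty$ case to the sharp exponent.
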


Using the Borell--Brascamp--Lieb Theorem \ref{thm:BBL} and Lemma \ref{lem:homogeneous}, we immediately deduce the following (essentially equivalent) corollary:
\begin{cor} \label{cor:conc-CC}
Let $w : \Real^n \rightarrow \Real_+$ be a $p$-concave and $p$-homogeneous density on $\Real^n$,  $p \in [-1/n,\infty] \setminus \set{0}$. Then $\mu = w(x) dx$ is $q$-CC, with $\frac{1}{q} = \frac{1}{p} + n$.
\end{cor}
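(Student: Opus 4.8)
The plan is to obtain Corollary \ref{cor:conc-CC} as a purely formal consequence of Lemma \ref{lem:conc-CC}, by translating the two hypotheses on the density $w$ into the corresponding properties of the measure $\mu = w(x)\,dx$ via the structural results already available: the Borell--Brascamp--Lieb Theorem \ref{thm:BBL}(1) and Lemma \ref{lem:homogeneous}(1). Throughout, $p$ and $q$ are linked by $\frac{1}{q} = \frac{1}{p} + n$; as recorded after Theorem \ref{thm:BBL}, this maps $(0,\infty]$ and $(-1/n,0)$ in the $p$-range bijectively onto $(0,1/n]$ and $(-\infty,0)$ in the $q$-range, while the endpoint $p = -1/n$ corresponds to $\frac{1}{q} = 0$.

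First I would treat $p \in (-1/n,0) \cup (0,\infty]$, equivalently $q \in (-\infty,1/n] \setminus \set{0}$. Since $w$ is $p$-concave with $p \in [-1/n,\infty]$, Theorem \ref{thm:BBL}(1) shows $\mu$ is $q$-concave. Since $w$ is $p$-homogeneous with $p \in (-\infty,\infty] \setminus \set{0}$, Lemma \ref{lem:homogeneous}(1) shows $\mu$ is $q$-homogeneous with the \emph{same} exponent $q$. As $q \in (-\infty,1/n] \setminus \set{0}$, Lemma \ref{lem:conc-CC} applies verbatim and yields that $\mu$ is $q$-CC, which is exactly the assertion.

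The only point needing separate attention is the borderline exponent $p = -1/n$ (so $\frac{1}{q} = 0$, i.e. $q = \infty$): here $w$ is homogeneous of degree $-n$, so the concavity and homogeneity exponents of $\mu$ decouple ($\mu$ is $(-\infty)$-concave but $\infty$-homogeneous, $\mu(\lambda A) = \mu(A)$) and Lemma \ref{lem:conc-CC} is not directly applicable. Instead I would invoke Theorem \ref{thm:BBL}(1) to conclude that $\mu$ is $(-\infty)$-concave, and then split on whether $\mu = 0$: if $\mu = 0$ it is trivially $q$-CC for every $q$; otherwise $\int_{B(0,1)} w\,dx = \infty$, since the density is positively homogeneous of degree $-n$ and $\int_{B(0,1)} \abs{x}^{-n}\,dx = \infty$, so $\mu$ is an \emph{infinite} $(-\infty)$-concave measure, and Proposition \ref{prop:infinite}(3) then gives that $\mu$ is $\infty$-CC, completing the proof. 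Since every ingredient is a cited result from the preceding sections, there is essentially no analytic obstacle; the content is bookkeeping of exponent ranges, and the only delicate point — handled above — is that at $p = -1/n$ the clean reduction through Lemma \ref{lem:conc-CC} must be replaced by the softer infiniteness argument of Proposition \ref{prop:infinite}(3).
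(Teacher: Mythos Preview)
Your argument is correct and follows exactly the route the paper takes: apply Theorem \ref{thm:BBL}(1) and Lemma \ref{lem:homogeneous}(1) to pass from density hypotheses to measure hypotheses, then invoke Lemma \ref{lem:conc-CC}. The paper compresses this into a single sentence (``Using the Borell--Brascamp--Lieb Theorem \ref{thm:BBL} and Lemma \ref{lem:homogeneous}, we immediately deduce\ldots'').

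Your separate treatment of the endpoint $p = -1/n$ is in fact more careful than the paper, which includes this value in the stated range but whose one-line justification via Lemma \ref{lem:conc-CC} (restricted to $q \in (-\infty,1/n]\setminus\{0\}$) does not literally cover it. Your detour through Proposition \ref{prop:infinite}(3) is sound: the density is $(-n)$-homogeneous in the usual sense, so $\mu$ is scale-invariant and (when nonzero) infinite, and the $(-\infty)$-concavity from Theorem \ref{thm:BBL}(1) then yields $\infty$-CC. One small point: your justification ``$\int_{B(0,1)}|x|^{-n}\,dx = \infty$'' is a slight shortcut, since $w$ need not be radial; the actual computation is $\int_{B(0,1)} w = \int_{S^{n-1}} w(\theta)\,d\theta \cdot \int_0^1 r^{-1}\,dr = \infty$ whenever the angular integral is positive, which it is when $\mu \neq 0$. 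Note also that at this endpoint the relation $1/q = 0$ is genuinely ambiguous in the paper (the concavity convention gives $q=-\infty$, the homogeneity convention gives $q=+\infty$); you have resolved it in favor of $q=+\infty$, which is the natural reading given the homogeneity interpretation and is the only one your argument (or any soft argument) can yield.
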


\begin{proof}[Proof of Lemma \ref{lem:conc-CC}]
Since $\mu$ is $q$-concave, it is also $-\infty$-concave by Remark \ref{rem:gen-BBL}. It follows by Proposition \ref{prop:infinite} that $\mu$ is also $\infty$-CC. It remains to employ the homogeneity of $\mu$ in order to ``self-improve" the degree of complemented-concavity from $\infty$ to $q$. 

Indeed, let $A,B \subset \Real^n$ denote two Borel sets with $\mu(\Real^n \setminus A), \mu(\Real^n \setminus B) < \infty$, and let $\lambda \in (0,1)$. Assume first that $\mu(\Real^n \setminus A) \mu(\Real^n \setminus B) > 0$ and define:
\begin{align*}
A' &= \frac{ \lambda \mu(\Real^n \setminus A)^q + (1-\lambda)\mu(\Real^n \setminus B)^q}
	{\mu(\Real^n \setminus A)^q} A ~; \\	
B' &= \frac{ \lambda \mu(\Real^n \setminus A)^q + (1-\lambda)\mu(\Real^n \setminus B)^q}
	{\mu(\Real^n \setminus B)^q} B ~; \\
\lambda' &= \frac{\lambda \mu(\Real^n \setminus A)^q}
{ \lambda \mu(\Real^n \setminus A)^q + (1-\lambda)\mu(\Real^n \setminus B)^q} ~.
\end{align*}
Then by $\infty$-complemented concavity (\ref{def:q-CBM}) applied to $A'$,$B'$ and $\lambda' \in (0,1)$: 
\[
\mu_\ast( \Real^n \setminus (\lambda' A' + (1-\lambda')B') ) \le
\max\brac{ \mu(\Real^n \setminus A'), \mu(\Real^n \setminus B')}. 
\]
Plugging in the definitions of $A'$,$B'$,$\lambda'$, and using the fact that $\Real^n \setminus \delta C = \delta (\Real^n \setminus C)$ ($\delta \neq 0$) and that $\mu$ is 
$q$-homogeneous, we obtain:
\begin{equation} \label{eq:lem-required}
\mu_\ast( \Real^n \setminus (\lambda A + (1-\lambda)B) ) \le
\brac{ \lambda \mu(\Real^n \setminus A)^q + (1-\lambda) \mu(\Real^n \setminus B)^q }^\frac{1}{q} ~,
\end{equation}
which is the desired $q$-CBM inequality. Finally, when $\mu(\Real^n \setminus A) \mu(\Real^n \setminus B) = 0$, (\ref{eq:lem-required}) is satisfied 
by an approximation argument. Indeed, whenever $\mu$ is not identically zero, since $\mu$ has a non-trivial density on an $m$-dimensional affine subspace of $\Real^n$, then for any Borel set $C$ with $\mu(\Real^n \setminus C) < \infty$, we can find Borel sets $C_k \subset C$ so that $(0,\infty) \ni \mu(\Real^n \setminus C_k) \rightarrow \mu(\Real^n \setminus C)$ as $k \rightarrow \infty$, and therefore:
\begin{eqnarray*}
& & \mu_\ast( \Real^n \setminus (\lambda A + (1-\lambda)B) ) \leq \liminf_{k \rightarrow \infty} \mu_\ast( \Real^n \setminus (\lambda A_k + (1-\lambda)B_k) )  \\
&\leq & \lim_{k \rightarrow \infty} \brac{ \lambda \mu(\Real^n \setminus A_k)^q + (1-\lambda) \mu(\Real^n \setminus B_k)^q }^\frac{1}{q} = \brac{ \lambda \mu(\Real^n \setminus A)^q + (1-\lambda) \mu(\Real^n \setminus B)^q }^\frac{1}{q} ~.
\end{eqnarray*}
\end{proof}

The main result of this section asserts that the requirement in Lemma \ref{lem:conc-CC} that $\mu$ be $q$-concave, is redundant when $q \leq 1$:

\begin{thm} \label{thm:hom-CBM}
Fix $q \leq 1$, $q \neq 0$, and let $\mu$ be any $\sigma$-finite $q$-homogeneous Borel measure on $\Real^n$. Then $\mu$ is $q$-CC. 
\end{thm}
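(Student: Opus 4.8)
The plan is to reduce the general $q$-homogeneous case to a form where one-dimensional convexity does the work, following the structure already set up for Lemma \ref{lem:conc-CC}, but without the concavity hypothesis. The key new idea must be a \emph{one-dimensional} complemented Brunn--Minkowski inequality: if $\nu$ is a $\sigma$-finite $r$-homogeneous Borel measure on the half-line $(0,\infty)$ with $r = 1/q \geq 1$, then for any Borel sets $S, T \subset (0,\infty)$ with $\nu$-cofinite complements and any $\lambda \in (0,1)$, one has $\nu_*((0,\infty) \setminus (\lambda S + (1-\lambda) T)) \leq (\lambda \nu((0,\infty)\setminus S)^q + (1-\lambda) \nu((0,\infty)\setminus T)^q)^{1/q}$. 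On the half-line an $r$-homogeneous measure is (a constant multiple of) $\nu([a,b]) = c(b^{1/r} - a^{1/r})$, so $t \mapsto \nu([0,t])^q = ct$ is \emph{linear}; this is where $q \leq 1$ enters, via $1/r \leq 1$ making $t \mapsto t^{1/r}$ concave, hence $\nu$-measure of an interval's complement behaving the right way under the operation $S \mapsto \lambda S$ and under Minkowski sums of intervals. The one-dimensional statement should follow by checking it first for $S, T$ complements of intervals $(0,a)$, $(0,b)$ (where $\lambda S + (1-\lambda)T \supset (0, \lambda a + (1-\lambda) b)^c$ after a short computation, reducing to the elementary inequality $(\lambda a + (1-\lambda)b)^{1/r} \leq \lambda a^{1/r} + (1-\lambda) b^{1/r}$), then for general $S, T$ by monotonicity (enlarging to the complement of the largest interval around the origin they miss).

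Next I would lift this to $\Real^n$ by polar decomposition. Write points of $\Real^n \setminus \{0\}$ as $r \theta$ with $r > 0$, $\theta \in S^{n-1}$; a $q$-homogeneous $\sigma$-finite measure $\mu$ disintegrates over the sphere as $d\mu = d\nu_\theta(r)\, d\sigma(\theta)$ where each $\nu_\theta$ is $q$-homogeneous on the ray $\Real_+\theta$ and $\sigma$ is some finite Borel measure on $S^{n-1}$ (the $\sigma$-finiteness and homogeneity force essentially this structure, with possibly an atom at the origin which contributes nothing since it has infinite... actually zero... $\mu$-mass issues must be checked — the origin is $\mu$-null when $q < 0$ and a fixed point when $q > 0$, handled separately). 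Then, using that each ray is one-dimensional, apply the one-dimensional CBM inequality fiberwise and integrate. The subtlety is that $\lambda A + (1-\lambda) B$ does not decompose nicely along rays — a sum of a point on ray $\theta_1$ and a point on ray $\theta_2$ lands on a third ray — so one cannot literally fiber the left-hand side. The fix, exactly as in the proof of Proposition \ref{prop:props}(3), is the \emph{reverse triangle inequality in $L^q(\lambda, 1-\lambda)$ for $q \leq 1$}: one shows $\mu_*(\Real^n \setminus (\lambda A + (1-\lambda)B))$ is bounded by an integral over the sphere of fiberwise complement-measures of \emph{projections} of $\lambda A + (1-\lambda)B$ onto rays, each of which contains $\lambda(\text{ray-projection of } A) + (1-\lambda)(\text{ray-projection of }B)$ only if... — this is where I expect the real difficulty.

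I think the cleanest route actually avoids the fibering obstruction entirely: reprove the self-improvement argument of Lemma \ref{lem:conc-CC} starting not from $\infty$-CC (which we no longer get for free, since $\mu$ need not be $-\infty$-concave) but from the one-dimensional base case combined with the convex-cone structure of $\C_{q,n}$ from Proposition \ref{prop:props}(3). Concretely: represent $\mu$ as a "superposition" $\mu = \int_{S^{n-1}} \mu^\theta \, d\sigma(\theta)$ where $\mu^\theta$ is the $q$-homogeneous measure supported on the single ray $\Real_+ \theta$ with the prescribed fiber; each $\mu^\theta$ is $q$-CC because it lives on a one-dimensional cone where we have just verified the $1$-d CBM inequality (note a measure on a line through the origin, pushed into $\Real^n$, is $q$-CC by Proposition \ref{prop:props}(4), (5)); then Proposition \ref{prop:props}(3), which requires precisely $q \leq 1$, gives $\mu \in \C_{q,n}$. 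The main obstacle is verifying rigorously that an arbitrary $\sigma$-finite $q$-homogeneous Borel measure admits such a disintegration into ray measures with the measurability needed to invoke Proposition \ref{prop:props}(3) — in particular handling the mass at the origin, the measurability of $\theta \mapsto \mu^\theta(A)$, and the case $q > 0$ where individual rays carry finite mass near $0$ versus $q < 0$ where they do not; once the disintegration is in hand the rest is bookkeeping.
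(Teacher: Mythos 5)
Your final route (the third paragraph) is exactly the paper's: disintegrate $\mu$ in polar coordinates as $\mu = \int_{S^{n-1}} \mu_\theta \, d\eta(\theta)$ with each $\mu_\theta$ a $q$-homogeneous measure on the ray $\Real_+\theta$, show each ray measure is $q$-CC, and conclude by the superposition/convex-cone property of Proposition \ref{prop:props}~(3), which is where $q \leq 1$ enters. The worries in your second paragraph about Minkowski sums not fibering along rays are indeed resolved by that property, and the disintegration itself is obtained cleanly by testing homogeneity on sets of the form $(s,t]A$ and applying Carath\'eodory's extension theorem (which also settles the measurability of $\theta \mapsto \mu_\theta(A)$ and shows the origin is $\mu$-null).

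The gap is in your one-dimensional base case. First, the reduction from general Borel sets $S,T$ to complements of intervals ``by monotonicity'' is not a valid step: enlarging $S$ and $T$ decreases the left-hand side of the CBM inequality but also decreases the right-hand side, and shrinking them increases both, so neither replacement is free. That reduction is in fact the entire content of the one-dimensional statement (compare the genuine work needed in Proposition \ref{prop:1-D-OCC}, where the right-closure $A'$, the infimum point $t_0$, and an integration argument are required). Second, your description of the extremal configuration mixes up the two regimes: for $q<0$ the density $r^{1/q-1}$ is non-integrable at the origin, so the sets with cofinite complement are the intervals $(0,a)$ themselves, not their complements. The clean fix --- and the paper's actual argument --- is to observe that in dimension one the concavity hypothesis you were trying to avoid is automatic: the ray measure is $\mu_\theta = 1_{\{r>0\}} r^{1/p}\,dr$ with $\frac{1}{p}=\frac{1}{q}-1$, and $w^p(r)=r$ is linear, hence $w$ is $p$-concave with $p \in [-1,\infty]\setminus\{0\}$ precisely when $q \le 1$, $q \neq 0$. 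So Corollary \ref{cor:conc-CC} (via Lemma \ref{lem:conc-CC}, already proved) gives that each $\mu_\theta$ is $q$-CC on its line, and Proposition \ref{prop:props}~(5) transfers this to $\Real^n$. With that substitution for your first paragraph, the rest of your argument goes through.
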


Again, using Lemma \ref{lem:homogeneous}, we immediately deduce the following corollary:

\begin{cor}
Let $w : \Real^n \setminus \set{0}\rightarrow \Real_+$ denote a $p$-homogeneous Borel density, $p \in (-\infty,-\frac{1}{n-1}] \cup (-\frac{1}{n},0) \cup (0,\infty]$. Then $\mu = w(x) dx$ is $q$-CC, where $\frac{1}{q} = \frac{1}{p} + n$. 
\end{cor}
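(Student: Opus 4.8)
The plan is to obtain this corollary as an immediate consequence of Theorem~\ref{thm:hom-CBM}, using part~(1) of Lemma~\ref{lem:homogeneous} to pass from the density $w$ to the measure $\mu$; essentially all that is required beyond citing these two results is some bookkeeping on the parameter ranges. First I would record that $\mu = w(x)\,dx$ is a $\sigma$-finite Borel measure: this holds for any finite-valued measurable density on $\Real^n \setminus \set{0}$, since $\Real^n$ is exhausted by the sets $\set{x : 1/k \leq \abs{x} \leq k,\ w(x) \leq k}$ together with the Lebesgue-null set $\set{0}$, and each such set has $\mu$-measure at most $k \cdot \vol(B(0,k)) < \infty$. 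Then, by Lemma~\ref{lem:homogeneous}~(1), the $p$-homogeneity of $w$ implies that $\mu$ is $q$-homogeneous with $\frac{1}{q} = \frac{1}{p} + n$.

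The next step is to verify that the stated range $p \in (-\infty,-\frac{1}{n-1}] \cup (-\frac{1}{n},0) \cup (0,\infty]$ corresponds, under $\frac{1}{q} = \frac{1}{p} + n$, precisely to the hypothesis $q \leq 1$, $q \neq 0$ of Theorem~\ref{thm:hom-CBM}. I would check the three sub-intervals in turn: for $p \in (0,\infty]$ one has $\frac{1}{q} = \frac{1}{p} + n \in [n,\infty)$, so $q \in (0,1/n]$; for $p \in (-\frac{1}{n},0)$ one has $\frac{1}{q} < 0$, so $q < 0$; and for $p \leq -\frac{1}{n-1}$ with $p < 0$ one has $\frac{1}{p} \geq -(n-1)$, hence $\frac{1}{q} = \frac{1}{p} + n \geq 1$, i.e. $q \in (1/n,1]$. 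Together these cover exactly $q \in (-\infty,0) \cup (0,1]$, while the excluded band $p \in (-\frac{1}{n-1},-\frac{1}{n}]$ would give $q > 1$ or $q = \infty$, outside the scope of Theorem~\ref{thm:hom-CBM}; when $n = 1$ the value $-\frac{1}{n-1}$ is read as $-\infty$ and the first interval is empty, which is again consistent.

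Finally, having shown that $\mu$ is a $\sigma$-finite $q$-homogeneous Borel measure with $q \leq 1$ and $q \neq 0$, I would apply Theorem~\ref{thm:hom-CBM} to conclude that $\mu$ is $q$-CC, completing the proof. There is no real obstacle here: the substance of the statement lies entirely in Theorem~\ref{thm:hom-CBM}, and the only points needing any attention are the elementary verification of $\sigma$-finiteness and the translation of the admissible range of $p$ into the admissible range of $q$.
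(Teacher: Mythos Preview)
Your proposal is correct and follows precisely the route indicated by the paper, which simply states that the corollary is deduced from Theorem~\ref{thm:hom-CBM} via Lemma~\ref{lem:homogeneous}. The paper does not spell out the $\sigma$-finiteness check or the $p\leftrightarrow q$ bookkeeping, so your version is a faithful (and slightly more detailed) rendering of the same argument.
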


\begin{proof}[Proof of Theorem \ref{thm:hom-CBM}]
Let us disintegrate $\mu$ into its one-dimensional radial components $\mu_\theta$ supported on $\Real_{\theta} := \Real_+ \theta$, $\theta \in S^{n-1}$, as follows:
\begin{equation} \label{eq:disintegrate0}
\mu = \int_{S^{n-1}} \mu_\theta d\eta(\theta)  ~,~  \mu_\theta = r^{1/q - 1} dLeb_{\Real_\theta}(r \theta) ~,~ 
\end{equation}
where $\eta$ is a $\sigma$-finite Borel measure on $S^{n-1}$. Indeed,  for sets of the form $(s,t] A$ where $(s,t] \subset (0,\infty)$ and $A \subset S^{n-1}$ is Borel, we easily verify by homogeneity that:
\[
\mu((s,t] A) = q (t^{1/q} - s^{1/q}) \eta(A) ~,~ \eta(A) := \frac{1}{q (2^{1/q} - 1)} \mu((1,2] A) ~.
\]
Since sets of the above form generate the entire Borel $\sigma$-algebra in $\Real^n$, the representation (\ref{eq:disintegrate0}) follows by Caratheodory's extension theorem. 

Set $\frac{1}{p} = \frac{1}{q}-1$. Since $\mu_\theta = 1_{\set{r > 0}} r^{1/p} dLeb_{\Real \theta}(r)$ has a $p$-homogeneous and $p$-concave density on $\Real \theta$, it follows by Corollary \ref{cor:conc-CC} and Proposition \ref{prop:props} (5) that $\mu_\theta$ is $q$-CC on $\Real^n$ for every $\theta \in S^{n-1}$. Recall that when $q \leq 1$ then the class of $q$-CC measures is a convex cone, and so by the extended formulation of Proposition \ref{prop:props} (3) and the representation (\ref{eq:disintegrate0}), it follows that $\mu$ is also $q$-CC, concluding the proof. 
\end{proof}

\begin{rem}
In particular, we conclude that the Lebesgue measure (in fact, restricted to an arbitrary measurable cone) is a $1/n$-CC measure, 
satisfying  (\ref{def:q-CBM}) with $q=1/n$. Using Theorem \ref{thm:hom-CBM} and the properties of Section \ref{sec:Properties}, one can also easily construct many 
additional $q$-CC measures which are not necessarily homogeneous. For example, if $\mu$ is any non-zero
$q$-homogeneous $\sigma$-finite measure with $q \leq 1$, and $x_1,x_2,\ldots,x_k \in \Real^n$ are distinct points ($k \geq 2$),
then the measure
\[
\widetilde{\mu}(A) = \sum_{i=1}^k \mu(A + x_i) 
\]
is also $q$-CC, but no longer homogeneous. By using several different homogeneous measures, or even
different values of $q_i \leq q$ (as $\C_{q_i,n} \subset \C_{q,n}$), one can create many further examples as well. 
An interesting question is what are the extremal rays of the convex cone of $q$-CC measures on $\Real^n$ for a given $q \leq 1$. 
\end{rem}

\subsection{Isoperimetric Inequality}

As a corollary of Theorem \ref{thm:hom-CBM} and Proposition \ref{prop:isop-negative-q}, one immediately obtains a 
sharp isoperimetric inequality for $q$-homogeneous measures:

\begin{cor}  \label{cor:hom-iso}
Let $\mu$ denote a $q$-homogeneous $\sigma$-finite Borel measure on $\Real^n$ with $q < 0$; in particular, the following applies if $\mu = w(x) dx$ and $w : \Real^n \setminus \set{0} \rightarrow \Real_+$ is a $p$-homogeneous Borel density with $p \in (-1/n,0)$. Let $\norm{\cdot}_K$ 
denote an arbitrary semi-norm on $\Real^n$ with open unit ball $K$ so that $\mu(\Real^n \setminus K) < \infty$. Then the following 
isoperimetric inequality holds:
\begin{equation} \label{eq:hom-iso}
\mu^{-}_{\norm{\cdot}_K}(C) \ge -\frac{1}{q} \mu(\Real^n \setminus K)^q  \mu(C)^{1-q} 
\end{equation}
for any Borel set $C$ with $\mu(C) < \infty$, and with equality for $C = \Real^n \setminus tK$,
 $t > 0$.
In other words, complements of homothetic copies of $K$ are isoperimetric
minimizers for $(\Real^n, \norm{\cdot}_K, \mu)$.
\end{cor}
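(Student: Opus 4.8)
The statement is an immediate synthesis of the two preceding results, so the plan is simply to assemble them and verify the hypotheses match. First I would observe that since $q < 0$, we certainly have $q \leq 1$ and $q \neq 0$, so Theorem \ref{thm:hom-CBM} applies to the given $\sigma$-finite $q$-homogeneous Borel measure $\mu$ and tells us that $\mu \in \C_{q,n}$, i.e.\ $\mu$ is $q$-CC. Thus $\mu$ is both $q$-CC and $q$-homogeneous with $q < 0$, which are precisely the hypotheses of Proposition \ref{prop:isop-negative-q}. Applying that proposition with the given semi-norm $\norm{\cdot}_K$ (whose open unit ball $K$ satisfies $\mu(\Real^n \setminus K) < \infty$ by assumption) yields directly the isoperimetric inequality (\ref{eq:hom-iso}) for every Borel set $C$ with $\mu(C) < \infty$, together with the equality cases $C = \Real^n \setminus tK$, $t > 0$, and the conclusion that complements of homothetic copies of $K$ are isoperimetric minimizers.

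It then remains to check the ``in particular'' clause, i.e.\ that the density case reduces to the measure case. If $\mu = w(x)\,dx$ with $w : \Real^n \setminus \set{0} \rightarrow \Real_+$ a $p$-homogeneous Borel density and $p \in (-1/n,0)$, then by Lemma \ref{lem:homogeneous} (part 1) the measure $\mu$ is $q$-homogeneous with $\frac{1}{q} = \frac{1}{p} + n$; since the interval $(-1/n,0)$ in the $p$-domain maps to $(-\infty,0)$ in the $q$-domain (as recorded after Theorem \ref{thm:BBL}), we indeed get $q < 0$. One also checks $\sigma$-finiteness: $w$ is Borel and locally bounded away from the origin by homogeneity, so each annulus $\set{x : 1/k \leq \norm{x}_K \leq k}$ has finite $\mu$-measure, the origin is Lebesgue-null, and $\Real^n$ is the countable union of these sets. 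Hence the hypotheses of the general statement are met and the inequality follows.

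I do not anticipate a genuine obstacle here — the corollary is purely a matter of bookkeeping, with all the real work already done in Theorem \ref{thm:hom-CBM} (whose proof goes through the radial disintegration and the convex-cone property of $q$-CC measures) and in Proposition \ref{prop:isop-negative-q} (whose proof is the scaling/homogeneity argument producing $\mu_*(\Real^n \setminus (A + tB)) \leq (\mu(\Real^n \setminus A)^q + t\,\mu(\Real^n \setminus B)^q)^{1/q}$ and then differentiating at $t = 0$). The only point requiring a moment's care is confirming that the density $\mu = w(x)\,dx$ with $p \in (-1/n,0)$ really is $\sigma$-finite despite the non-integrable singularity at the origin, which is why the hypothesis is phrased so that $\mu(\Real^n \setminus K)$, rather than $\mu(K)$, is finite.

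\begin{proof}
Since $q < 0$ we have $q \leq 1$ and $q \neq 0$, so Theorem \ref{thm:hom-CBM} shows that the $\sigma$-finite $q$-homogeneous measure $\mu$ is $q$-CC. Being both $q$-CC and $q$-homogeneous with $q<0$, $\mu$ satisfies the hypotheses of Proposition \ref{prop:isop-negative-q}, which (using $\mu(\Real^n \setminus K) < \infty$) gives inequality (\ref{eq:hom-iso}) for every Borel set $C$ with $\mu(C) < \infty$, with equality for $C = \Real^n \setminus tK$, $t>0$.

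For the ``in particular'' assertion, let $\mu = w(x)\,dx$ with $w$ a $p$-homogeneous Borel density and $p \in (-1/n,0)$. By Lemma \ref{lem:homogeneous} (1), $\mu$ is $q$-homogeneous with $\frac1q = \frac1p + n$, and since $p \in (-1/n,0)$ corresponds to $q \in (-\infty,0)$ we have $q < 0$. Moreover $w$ is locally bounded on $\Real^n \setminus \set{0}$ by homogeneity, so $\mu$ restricted to each annulus $\set{x : 1/k \le \norm{x}_K \le k}$ is finite; as $\Real^n$ is the union of these annuli together with the $\mu$-null point $\set{0}$, the measure $\mu$ is $\sigma$-finite. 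Thus the first part applies and the proof is complete.
\end{proof}
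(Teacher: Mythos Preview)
Your approach is exactly the paper's: the corollary is stated there as immediate from Theorem \ref{thm:hom-CBM} and Proposition \ref{prop:isop-negative-q}, with no further argument given, and you have simply spelled out the two-line verification. One small correction to your auxiliary check: homogeneity alone does not force $w$ to be locally bounded on $\Real^n \setminus \set{0}$ (the restriction of $w$ to the unit sphere can be an arbitrary non-negative Borel function), so your annulus argument for $\sigma$-finiteness does not go through as written; however, any measure absolutely continuous with respect to Lebesgue measure is automatically $\sigma$-finite (cover $\Real^n$ by the sets $\set{w \leq k} \cap B(0,k)$), so the conclusion stands and the overall proof is correct.
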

\begin{rem}
We refer to the Introduction for remarks and references regarding previously known partial results. The range $p \leq -1/n$ is excluded above for good reason: it is known that even for Euclidean space endowed with the density $w(x) = \abs{x}^{1/p}$ for $p$ in that range, isoperimetric minimizers do not exist \cite[Prop 7.3]{DHHT-IsopInSectorsWithDensity}.
\end{rem}
The next subsection addresses the characterization of the equality cases in the $q$-CBM inequality and corresponding isoperimetric inequality.

\subsection{Equality Cases}

In this subsection, our analysis will be based on Dubuc's characterization in Theorem \ref{thm:Dubuc} of the equality case in the $q$-BM inequality. In the next section, we will present a different argument based on a $1$-dimensional analysis, which applies in a more general setting on one hand, but assumes that one of the sets is star-shaped on the other. 

We begin with a technical lemma required for the ensuing theorem, in which the superfluous technical assumptions will be removed. Although we will only apply this lemma in dimension $1$, we formulate it arbitrary dimension. 

\begin{lem} \label{lem:eq-conc}
Fix $q<0$ and let $\mu = w(x) dx$ be a $q$-concave and $q$-homogeneous measure on $\Real^n$ with density bounded above on $S^{n-1}$. 
Let $A$ and $B$ be two Borel sets with $\mu(\Real^n \setminus A) , \mu(\Real^n \setminus B)\in (0,\infty)$, and assume in addition that $B$ is open and contains the origin. If for some $\lambda \in (0,1)$ we have:
\[ 
\mu( \Real^n \setminus (\lambda A + (1-\lambda)B) ) =
\brac{ \lambda \mu(\Real^n \setminus A)^q + (1-\lambda) \mu(\Real^n \setminus B)^q }^\frac{1}{q},
\] 
then up to null-sets $A\cap \Sigma$ and $B \cap \Sigma$ are homothetic and convex, where 
$\Sigma$ is the convex cone on which $\mu$ is supported. 
\end{lem}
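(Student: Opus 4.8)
The strategy is to reduce the equality case for the complemented Brunn--Minkowski inequality to the equality case for the ordinary $q$-Brunn--Minkowski inequality, so that Dubuc's Theorem~\ref{thm:Dubuc} (2), via Corollary~\ref{cor:equality-global}, can be applied. First I would revisit the ``self-improvement'' argument in the proof of Lemma~\ref{lem:conc-CC}: one rescales $A$ and $B$ to $A',B'$ and replaces $\lambda$ by $\lambda'$ so that the $q$-CBM inequality with exponent $q$ for $(A,B,\lambda)$ becomes the $\infty$-CBM inequality (i.e.\ $\mu_*(\Real^n\setminus(\lambda'A'+(1-\lambda')B'))\le\max(\mu(\Real^n\setminus A'),\mu(\Real^n\setminus B'))$) for $(A',B',\lambda')$. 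Since these transformations are invertible, equality in the $q$-CBM inequality for $(A,B,\lambda)$ is equivalent to equality in the $\infty$-CBM inequality for $(A',B',\lambda')$; and by homogeneity it is harmless to work with $A',B',\lambda'$ instead, so I may as well assume from the start that we are at equality in the $\infty$-CBM inequality, i.e.\ $\mu(\Real^n\setminus(\lambda A+(1-\lambda)B))=\max(\mu(\Real^n\setminus A),\mu(\Real^n\setminus B))$, with the additional normalization (again using homogeneity to rescale) that $\mu(\Real^n\setminus A)=\mu(\Real^n\setminus B)$, hence all three complementary measures are equal to a common finite positive value $v$.

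Next I would recall from the proof of Proposition~\ref{prop:infinite} (2) that for finite measures, the $\infty$-CBM inequality is literally the statement that $\mu(\lambda A+(1-\lambda)B)\ge\min(\mu(A),\mu(B))$ for the restricted/truncated finite pieces of $\mu$; but here $\mu$ is infinite, so one must instead argue through the truncations $\mu_k := w_k\,dx$ with $w_k := \min(w,k)\mathbf 1_{B(0,k)}$, which are finite and $q$-concave (hence $-\infty$-concave), and pass to the limit. The key monotonicity is: $\mu(\Real^n\setminus C)=\lim_k \mu_k(\Real^n\setminus C)$ need not hold because $\mu_k$ is a truncation, so instead I would work directly with $\mu$ and exploit that $B$ is open and contains the origin. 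Because $B\ni 0$ and is open, $\lambda A+(1-\lambda)B\supseteq \lambda A$, and more usefully the containment $\lambda A+(1-\lambda)B\supseteq A$ may fail, so the honest route is: set $C_1 := \Real^n\setminus A$, $C_2 := \Real^n\setminus B$, and observe $\Real^n\setminus(\lambda A+(1-\lambda)B)\subseteq \lambda C_1+(1-\lambda)C_2$ (this is the elementary set inclusion underlying the whole CBM philosophy, using $0\in B$ to handle the scaling). Then $\mu(\lambda C_1+(1-\lambda)C_2)\ge \mu(\Real^n\setminus(\lambda A+(1-\lambda)B))=v=\max(\mu(C_1),\mu(C_2))$, while the reverse inequality $\mu(\lambda C_1+(1-\lambda)C_2)\le(\lambda\mu(C_1)^q+(1-\lambda)\mu(C_2)^q)^{1/q}=v$ would follow if $\mu$ restricted to the relevant region were $q$-concave — which it is, since $\mu$ is $q$-concave globally and $\mu(C_1),\mu(C_2)>0$. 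Hence we get equality in the genuine $q$-Brunn--Minkowski inequality $\mu(\lambda C_1+(1-\lambda)C_2)=(\lambda\mu(C_1)^q+(1-\lambda)\mu(C_2)^q)^{1/q}$ for the sets $C_1,C_2$, \emph{provided} $\mu(\lambda C_1+(1-\lambda)C_2)<\infty$, which is exactly where the hypothesis ``density bounded above on $S^{n-1}$'' together with $q$-homogeneity is used: it guarantees $\mu$ assigns finite mass to sets bounded away from infinity, and here $\lambda C_1+(1-\lambda)C_2$ lies inside $\Real^n\setminus(\lambda A+(1-\lambda)B)$ plus a bounded error coming from $0\in B$, so its $\mu$-measure is controlled.

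Finally, having equality in the $q$-Brunn--Minkowski inequality for $C_1=\Real^n\setminus A$ and $C_2=\Real^n\setminus B$ with $\mu(C_1),\mu(C_2)\in(0,\infty)$, I would invoke Corollary~\ref{cor:equality-global}: there exist $m>0$ and $b\in\Real^n$ with $C_2 = mC_1+b$ up to $\mu$-null sets, and $w(mx+b)=cw(x)$ a.e.\ on $C_1$. To conclude that $b=0$ — so that the second part of Corollary~\ref{cor:equality-global} applies and gives convexity of $A\cap\Sigma$, $B\cap\Sigma$ up to null sets — I would use the homogeneity of $\mu$ and the fact that $B$ contains the origin: $\Real^n\setminus C_2=B\ni 0$, and translating a homothet of a set whose complement omits a neighborhood of $0$ by a nonzero vector would force, via a scaling/iteration argument (compare the cancellation in the proof of Corollary~\ref{cor:equality-global}, second paragraph), an inconsistency with $q$-homogeneity unless $b=0$; alternatively, rescale $C_1$ by $m$ first and note $(\lambda'' (mC_1) + (1-\lambda'') C_2)$ must coincide up to null sets with $(mC_1)\cap C_2 = mC_1 = C_2$, whence $mC_1$ and $C_2$ are the same convex set up to null sets by Theorem~\ref{thm:Dubuc-lem}, forcing $b=0$ after absorbing it. Then $A\cap\Sigma = (\Real^n\setminus C_1)\cap\Sigma$ and $B\cap\Sigma=(\Real^n\setminus C_2)\cap\Sigma$ are complements within $\Sigma$ of convex sets that are homothetic, which is the claimed conclusion. \textbf{The main obstacle} I anticipate is the bookkeeping around infinite measure and the inner measure $\mu_*$: one must be careful that the set inclusions are between genuinely measurable objects (using that $B$ open makes $\lambda A + (1-\lambda)B$ open-ish when $A$ is, or passing to Borel hulls), that $\mu(\lambda C_1+(1-\lambda)C_2)$ is actually finite so Dubuc's equality theorem applies, and that the normalization reductions using homogeneity are genuinely reversible at the level of equality cases; the role of $0\in B$ and of the boundedness of $w$ on $S^{n-1}$ must be tracked precisely through all of these steps.
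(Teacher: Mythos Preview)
Your proposal has two genuine gaps that break the argument, and a third issue with the conclusion.

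\textbf{First}, the set inclusion you invoke,
\[
\Real^n\setminus(\lambda A+(1-\lambda)B)\ \subseteq\ \lambda C_1+(1-\lambda)C_2 \quad (C_i=\Real^n\setminus A,\ \Real^n\setminus B),
\]
is not an ``elementary set inclusion underlying the whole CBM philosophy'' --- it is simply false in general. A point $z$ lies in neither $\lambda A+(1-\lambda)B$ nor $\lambda C_1+(1-\lambda)C_2$ precisely when, along the affine line $\{(x,y):\lambda x+(1-\lambda)y=z\}$, one has $x\in A\iff y\notin B$; take for instance $A=(0,\infty)$, $B=[0,\infty)$ in $\Real$ with $\lambda=\tfrac12$, where $z=0$ fails the inclusion. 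Nothing in the hypotheses ($B$ open, $0\in B$) rules this out, and you give no argument that the failure set is $\mu$-null.

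\textbf{Second}, and more fatally, you have the direction of $q$-concavity reversed. You write that ``the reverse inequality $\mu(\lambda C_1+(1-\lambda)C_2)\le(\lambda\mu(C_1)^q+(1-\lambda)\mu(C_2)^q)^{1/q}$ would follow if $\mu$ \ldots\ were $q$-concave''. But $q$-concavity is the \emph{lower} bound $\mu(\lambda C_1+(1-\lambda)C_2)\ge(\lambda\mu(C_1)^q+(1-\lambda)\mu(C_2)^q)^{1/q}$. So from your inclusion (even if it held) and from $q$-concavity you get two inequalities both pointing the same way, $\ge v$, and you cannot squeeze out equality.

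\textbf{Third}, even if you reached equality in the $q$-BM inequality for $C_1,C_2$, Corollary~\ref{cor:equality-global} would tell you that $C_1\cap\Sigma$ and $C_2\cap\Sigma$ are convex up to null-sets --- not that $A\cap\Sigma=(C_1)^c\cap\Sigma$ and $B\cap\Sigma=(C_2)^c\cap\Sigma$ are. Complements of convex sets are not convex.

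The paper's route avoids all three issues by working with $A$ and $B$ themselves rather than their complements. After normalizing so that $\mu(\Real^n\setminus A)=\mu(\Real^n\setminus B')$, one truncates the density to $w_k=\min(w,k)$, obtaining a \emph{finite} $q$-concave measure $\mu_k$. The point of ``$0\in B$ open'' and ``$w$ bounded on $S^{n-1}$'' is that for large $k$ the truncation does not touch $\Real^n\setminus B'$ nor $\Real^n\setminus(\delta A+(1-\delta)B')$ (both miss a neighborhood of the origin, which is where $w$ blows up), so the complemented equality passes verbatim to $\mu_k$. Now, because $\mu_k$ is finite, one may take honest complements: $\mu_k(\delta A+(1-\delta)B')=\mu_k(B')$. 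Applying the (correctly oriented!) $q$-concavity of $\mu_k$ to $A,B'$ then forces $\mu_k(A)=\mu_k(B')$ and hence equality in the $q$-BM inequality for $A,B'$ and $\mu_k$; Dubuc's theorem now applies to the right pair of sets and yields convexity of $A\cap\Sigma$, $B\cap\Sigma$. The argument that the translation $b$ vanishes uses that $w(mx+b)=cw(x)$ on $B$ together with $w\to\infty$ at the origin and $w$ bounded on $S^{n-1}$, rather than the vague ``scaling/iteration'' you allude to.
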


Note that since $B$ is open then all the relevant sets are Borel, so we do not need to worry about
measurability issues.

\begin{proof}
By the homogeneity of $\mu$, we may scale $B$ to a set $B' = \frac{1}{c} B$ so that $\mu(\Real^n \setminus B') = \mu(\Real^n \setminus A)$, 
and set\footnote{In the published version, there was an erroneous and unnecessary reduction to the case $\delta = 1/2$} $\delta := \lambda / (\lambda + (1-\lambda) c) \in (0,1)$. Employing the homogeneity and using the assumption, we have:
\begin{eqnarray*}
&  & \mu\brac{ \Real^n \setminus \brac{\delta A + (1-\delta) B'}}^q  = \frac{1}{\lambda + (1-\lambda) c} \cdot
\mu\brac{ \Real^n \setminus \brac{\lambda A + (1-\lambda) B}}^q  \\
& = & \frac{1}{\lambda + (1-\lambda) c} \brac{ \lambda \mu(\Real^n \setminus A)^q + (1-\lambda) \mu(\Real^n \setminus B)^q } \\
& = &  \delta \mu(\Real^n \setminus A)^q + (1-\delta) \mu(\Real^n \setminus B' )^q =\mu(\Real^n \setminus B')^q = \mu(\Real^n \setminus A)^q ~.
\end{eqnarray*}

We now approximate the measure $\mu$ is a way similar to that in the proof of Proposition \ref{prop:infinite} (3). Set $w_k := \min(w,k)$ and $\mu_k := w_k(x) dx$. Since $w$ is $p$-concave (with $\frac{1}{q} = \frac{1}{p} + n$), then so is $w_k$, and hence $\mu_k$ is $q$-concave for all $k \geq 0$.  Since $B'$ contains a neighbourhood of the origin and $w$ is bounded on $S^{n-1}$ and is homogeneous, we have $\mu(\Real^n \setminus B') = \mu_k(\Real^n \setminus B')$ for large enough $k$. Since 
$\mu( \Real^n \setminus A) < \infty$ the origin is in the closure of $A$, so $\delta A+(1-\delta)B'$ also
contains a neighborhood of the origin. It follows that for large enough $k$ we have:
\[
\mu_k\brac{ \Real^n \setminus \brac{\delta A+ (1-\delta) B'}} = 
\mu\brac{ \Real^n \setminus \brac{\delta A+ (1-\delta) B'}} = \mu(\Real^n \setminus B')
= \mu_k(\Real^n \setminus B').
\]
Since $\mu_k$ has finite mass ($\mu$ is integrable at infinity as $q < 0$), we may take the complement and deduce that $\mu_k(\delta A+(1-\delta) B') = \mu_k(B')$.

Now we wish to calculate $\mu_k(A)$. On the one hand:
\[
\mu_k(\Real^n \setminus A) \le \mu(\Real^n \setminus A) = \mu(\Real^n \setminus B')
= \mu_k(\Real^n \setminus B'), 
\]
so $\mu_k(A) \ge \mu_k(B')$. On the other hand $\mu_k$ is $q$-concave, so:
\[
\mu_k(B') = \mu_k\brac{\delta A+ (1-\delta) B'} \ge 
\brac{\delta \mu_k(A)^q + (1-\delta) \mu_k(B')^q}^\frac{1}{q}.
\]
Solving for $\mu_k(A)$ we see that $\mu_k(A) \le \mu_k(B')$, so all together we have 
$\mu_k(A) = \mu_k(B')$.

Now we are ready to invoke Dubuc's characterization of equality in the $q$-BM inequality. We know that:
\[
\mu_k\brac{\delta A+(1-\delta) B'} = \mu_k(A) = \mu_k(B') = 
\brac{\delta \mu_k(A)^q + (1-\delta) \mu_k(B')^q}^\frac{1}{q} > 0 ~.
\]
Since $\mu_k$ is $q$-concave and is supported on the same cone $\Sigma$ as $\mu$, 
it follows by Corollary \ref{cor:equality-global} that up to a null-set, $A \cap \Sigma$ coincides with $(m B + b) \cap \Sigma$ for some $m > 0$ and $b \in \Real^n$, and that $w(m x+ b) = c w(x)$ for almost all $x \in B$ and some $c > 0$. Since $B$ was assumed to contain a neighborhood of the origin and since $w$ is continuous in the interior of $\Sigma$ and $p$-homogeneous, it follows that $\lim_{x \rightarrow b} w(x) = \lim_{x \rightarrow 0} w(x) = +\infty$. Since the $w$ was also assumed bounded on $S^{n-1}$, we must have $b=0$.

Finally, the convexity up to null-sets of $A \cap \Sigma$ and $B \cap \Sigma$ follows by mimicking the proof of the second part of Corollary \ref{cor:equality-global}. 
\end{proof}

Now we can prove a stronger version of Lemma \ref{lem:eq-conc}, which does not require any concavity
assumptions:
\begin{thm} \label{thm:eq-hom}
Fix $q<0$ and let $\mu$ denote a $\sigma$-finite $q$-homogeneous measure supported on the cone $\Sigma$. Let $A,B$ be two
Borel sets with $\mu(\Real^n \setminus A) , \mu(\Real^n \setminus B) \in (0,\infty)$, and assume in addition that $B$ is an open set containing 
the origin. If for some $\lambda \in (0,1)$ we have: 
\begin{equation} \label{eq:eq-hom-needed}
\mu( \Real^n \setminus (\lambda A + (1-\lambda)B) ) =
\brac{ \lambda \mu(\Real^n \setminus A)^q + (1-\lambda) \mu(\Real^n \setminus B)^q }^\frac{1}{q},
\end{equation}
then $A\cap \Sigma$ and $B \cap \Sigma$ are homothetic up to a null-set. 
Furthermore, if $\Sigma$ is convex, then up to null-sets, so are $A \cap \Sigma$ and $B \cap \Sigma$. 
\end{thm}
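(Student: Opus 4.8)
The plan is to reduce Theorem \ref{thm:eq-hom} to the already-established Lemma \ref{lem:eq-conc} by a disintegration argument, exactly paralleling the way Theorem \ref{thm:hom-CBM} was deduced from Lemma \ref{lem:conc-CC} and Corollary \ref{cor:conc-CC}. The key point is that in the equality case of the $q$-CBM inequality, equality must propagate down to each one-dimensional radial fiber $\mu_\theta$, where the density $r^{1/q-1} = r^{1/p}$ is automatically $p$-concave (being a power function of $r$ on a half-line), so Lemma \ref{lem:eq-conc} applies fiberwise and then one reassembles the conclusion.

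First I would disintegrate $\mu$ as in \eqref{eq:disintegrate0}, $\mu = \int_{S^{n-1}} \mu_\theta \, d\eta(\theta)$ with $\mu_\theta = r^{1/q-1} dLeb_{\Real_\theta}(r\theta)$ supported on the ray $\Real_\theta = \Real_+\theta$, where $\eta$ is $\sigma$-finite on $S^{n-1}$ and the support cone $\Sigma$ corresponds (up to $\eta$-null sets) to the set of directions $\theta$ for which $\mu_\theta \neq 0$. Next I would trace through the chain of inequalities \eqref{eq:individual-CBM}--\eqref{eq:reverse-triangle} that was used in the proof of Proposition \ref{prop:props}(3) to establish that the convex combination $\mu = \int \mu_\theta \, d\eta$ is $q$-CC. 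Since we are now assuming equality in the top line for our specific $A$, $B$, $\lambda$, every inequality in that chain must be an equality $\eta$-almost everywhere: in particular, for $\eta$-a.e. $\theta$ we have
\[
(\mu_\theta)_*\!\left(\Real^n \setminus (\lambda A + (1-\lambda) B)\right) = \left(\lambda \, \mu_\theta(\Real^n \setminus A)^q + (1-\lambda)\, \mu_\theta(\Real^n \setminus B)^q\right)^{1/q},
\]
(the reverse-triangle-inequality step \eqref{eq:reverse-triangle} is an equality iff the two vector-valued integrands are $\eta$-a.e. proportional, and combined with the previous $q$-CBM line this forces fiberwise equality; one should check the finiteness hypotheses $\mu_\theta(\Real^n\setminus A), \mu_\theta(\Real^n\setminus B) < \infty$ hold $\eta$-a.e. by Fubini, and restrict attention to the $\theta$ with $\mu_\theta \neq 0$, where these masses are strictly positive since $B$ contains a neighborhood of the origin and hence $\Real^n\setminus A$, $\Real^n\setminus B$ have positive $\mu_\theta$-measure as soon as they have positive $\mu$-measure — here one may also need to discard a further $\eta$-null set). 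Now $\mu_\theta$ restricted to its support ray has density $r^{1/p}$ which is $p$-concave and $p$-homogeneous, and it is trivially bounded on $S^{n-1}\cap\Sigma$ (a single point), and $B$ is open containing the origin; so Lemma \ref{lem:eq-conc} applies on the line $\Real\theta$ and tells us that $A\cap\Real_\theta$ and $B\cap\Real_\theta$ coincide, up to $\mu_\theta$-null sets, with homothetic convex subsets of $\Real_\theta$ — i.e., each is (up to a null set) an interval of the form $[0,a(\theta))$ or $[0,b(\theta))$ with $a(\theta)/b(\theta)$ a common ratio determined by $\mu(\Real^n\setminus A)/\mu(\Real^n\setminus B)$ via homogeneity (independent of $\theta$).

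Finally I would reassemble: since for $\eta$-a.e. $\theta$ the fiber $A\cap\Real_\theta$ equals $b(\theta)^{-1}a(\theta)\cdot(B\cap\Real_\theta) = m\cdot(B\cap\Real_\theta)$ up to a $\mu_\theta$-null set with the same scalar $m = (\mu(\Real^n\setminus A)/\mu(\Real^n\setminus B))^{1/(1/q-1)}$ (or the appropriate homogeneity exponent) for all $\theta$, integrating against $\eta$ gives that $A\cap\Sigma$ and $mB\cap\Sigma$ differ by a $\mu$-null set, proving the homothety claim. For the furthermore, if $\Sigma$ is convex, the same argument as in the second part of Corollary \ref{cor:equality-global} (or of Lemma \ref{lem:eq-conc}) applies: after rescaling so that $\mu(\Real^n\setminus A') = \mu(\Real^n\setminus B)$ and using homogeneity one finds that a suitable convex combination $\lambda'' A' + (1-\lambda'') B$ intersected with $\Sigma$ coincides up to a null set with $A'\cap B\cap\Sigma$, and since $\Sigma$ is convex one has $\lambda''(A'\cap\Sigma) + (1-\lambda'')(B\cap\Sigma) \subset (\lambda''A' + (1-\lambda'')B)\cap\Sigma$, so Theorem \ref{thm:Dubuc-lem} forces $A'\cap\Sigma$ and $B\cap\Sigma$ to be convex up to null sets.

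The main obstacle I anticipate is the fiberwise-equality step: one must argue carefully that equality in the aggregated $q$-CBM inequality genuinely descends to $\eta$-a.e. fiber. This requires (i) knowing the first inequality $\mu_*(\cdot) \le \int (\mu_\theta)_*(\cdot)\,d\eta$ is an equality — which holds automatically here because both sides equal the left side of the hypothesis once the top and bottom of the chain agree — and (ii) extracting from equality in the reverse Minkowski (triangle) inequality in $L^q(\eta)$ (valid for $q\le 1$) the pointwise proportionality of $\theta\mapsto(\mu_\theta(\Real^n\setminus A), \mu_\theta(\Real^n\setminus B))$, then combining with the intermediate line to get fiberwise equality of the two scalars' $\ell^q$-average with $(\mu_\theta)_*(\Real^n\setminus(\lambda A + (1-\lambda)B))$. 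A secondary technical nuisance is handling the inner-measure $(\mu_\theta)_*$ versus $\mu_*$ and the measurability of $\lambda A + (1-\lambda)B$, but since $B$ is open the set $\lambda A + (1-\lambda)B$ is open (the reasoning used in Proposition \ref{prop:isop-negative-q}), so all inner measures may be replaced by genuine measures and this issue disappears.
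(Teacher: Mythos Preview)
Your proposal is correct and follows essentially the same route as the paper's own proof: disintegrate $\mu$ radially as in (\ref{eq:disintegrate0}), trace equality through the chain (\ref{eq:individual-CBM})--(\ref{eq:reverse-triangle}) to force fiberwise equality for $\eta$-a.e.\ $\theta$, invoke Lemma~\ref{lem:eq-conc} on each ray to get $A_\theta = m_\theta B_\theta$, use the equality case of the reverse triangle inequality in $L^q$ to make $m_\theta$ constant, and then reassemble; the convexity addendum likewise matches the paper's appeal to the second part of Corollary~\ref{cor:equality-global} via Theorem~\ref{thm:Dubuc-lem}. One small point the paper makes more explicit than you do: besides Proposition~\ref{prop:props}(3), the reduction step (\ref{eq:reduction-to-K}) from Proposition~\ref{prop:props}(5) is also part of the chain, and its equality case is exactly what passes from $(\mu_\theta)_*(\Real^n \setminus (\lambda A + (1-\lambda)B))$ to $(\mu_\theta)_*(\Real_\theta \setminus (\lambda A_\theta + (1-\lambda)B_\theta))$, which is the form needed before Lemma~\ref{lem:eq-conc} can be applied on the line --- you implicitly assume this but should name it.
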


\begin{proof}
Let us examine more carefully the proof of Theorem \ref{thm:hom-CBM}; we proceed with the same notation as there. The proof of Theorem \ref{thm:hom-CBM} relied on the convex cone and intrinsicness properties given by Proposition \ref{prop:props} (3) and (5), so to obtain equality in (\ref{eq:eq-hom-needed}) we must have equality in all of the sequence of inequalities appearing in the proofs of these properties.  In particular, for $\eta$-almost-every $\theta \in S^{n-1}$:
\begin{itemize}
\item 
Setting $\Real_\theta := \Real_+ \theta$, $A_\theta := A \cap \Real_\theta$ and $B_\theta := B \cap \Real_\theta$, by Fubini $\mu_\theta(\Real_\theta \setminus A_\theta), \mu_\theta(\Real_\theta \setminus B_\theta)\in (0,\infty)$. In addition, $B \cap \Real \theta$ is open and contains the origin.
\item
We must have equality in the $q$-CBM inequality for the $1$-dimensional measure $\mu_\theta$ in (\ref{eq:individual-CBM}). It follows by Lemma \ref{lem:eq-conc} that $A_\theta$ coincides with $m_\theta B_\theta$ up to a $1$-dimensional null-set, for $m_\theta > 0$. 
\item 
We must have equality in the inequality (\ref{eq:reduction-to-K}) reducing to the $1$-dimensional case, i.e.:
\begin{equation} \label{eq:radial-reduction}
(\mu_\theta)_*(\Real_\theta \setminus (\lambda A + (1-\lambda) B)) = (\mu_\theta)_*(\Real_\theta \setminus (\lambda A_\theta + (1-\lambda) B_\theta))  ~.
\end{equation}
\end{itemize}
In addition, we must have equality in the reverse triangle inequality in $L_q(\lambda,1-\lambda)$ employed in (\ref{eq:reverse-triangle}). Since $q < 0$, the function $\Real_+^2 \ni (x,y) \mapsto (\lambda x^q + (1-\lambda) y^q)^{1/q}$ is strictly convex in the normal direction to $(x,y)$, but linear in the radial direction. Consequently, for $\eta$-almost-every $\theta \in S^{n-1}$, the two-dimensional points $(\mu_\theta(\Real_\theta \setminus A_\theta),\mu_\theta(\Real_\theta \setminus B_\theta))$ must lay on a one-dimensional linear subspace. Since $A_\theta = m_\theta B_\theta$ up to a null-set for $\eta$-almost-all $\theta \in S^{n-1}$, and $\mu_\theta$ is $q$-homogeneous, it follows that $m_\theta$ is $\eta$-almost-everywhere equal to a constant $m > 0$. This implies that $A$ and $m B$ coincide up to zero $\mu$-measure, and in particular, that $A \cap \Sigma = m (B \cap \Sigma)$ up to an $n$-dimensional null-set.

Finally, when $\Sigma$ is convex, the convexity up to null-sets of $A \cap \Sigma$ and $B \cap \Sigma$ follows exactly as in the previous lemma, by mimicking the proof of the second part of Corollary \ref{cor:equality-global}.
\end{proof}

To conclude this section, we state a characterization of the equality case in the corresponding isoperimetric inequality. For isoperimetric minimizers which are complements of convex sets, one may employ the Bonnesen idea exactly as in Section \ref{sec:Positive-P} and reduce to the characterization of the equality case in the $q$-CBM inequality given in Theorem \ref{thm:eq-hom}.  Arbitrary minimizers will be handled by a slightly more delicate analysis developed in the next section for more general densities, and so we only quote the following particular case of the more general Theorem \ref{thm:n-D-OCC-eq} from the next section.

\begin{thm} \label{thm:hom-isop-eq}
Fix $q<0$, and let $\mu$ denote a $\sigma$-finite $q$-homogeneous measure on $\Real^n$ supported on the cone $\Sigma$. Let $B$ be any 
convex domain containing the origin with $\mu(\Real^n \setminus B) \in (0,\infty)$. Assume that for some Borel set $C$ with $\mu(C) \in (0,\infty)$ we have:
\[ 
\mu^{-}_{B}(C) = -\frac{1}{q} \mu(\Real^n \setminus B)^q  \mu(C)^{1-q}.
\]
Then up to a null-set, $C \cap \Sigma$ is homothetic to $\Sigma \setminus B$.
\end{thm}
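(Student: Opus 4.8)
The plan is to reduce the infinitesimal (isoperimetric) equality statement to the global equality statement in the $q$-CBM inequality, which was already settled in Theorem \ref{thm:eq-hom}, by means of the Bonnesen-type trick used in Corollary \ref{cor:equality-local}. However, since the minimizer $C$ is only assumed Borel (not the complement of a convex set), we must first show that $C$ is automatically, up to a null-set, the complement of a set containing a neighborhood of the origin, so that the relevant function of the scaling parameter is concave and the argument can proceed. Because Theorem \ref{thm:hom-isop-eq} is explicitly quoted as a special case of the more general Theorem \ref{thm:n-D-OCC-eq} of the next section, the cleanest route is really to invoke that theorem; but to argue it directly from what precedes, I would proceed as follows.

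First I would set $A := \Real^n \setminus C$, so $\mu(\Real^n \setminus A) = \mu(C) \in (0,\infty)$, and $B$ is the given convex domain with $0 \in B$ and $\mu(\Real^n \setminus B) \in (0,\infty)$. Consider the function
\[
\Psi(t) := \mu\brac{\Real^n \setminus \brac{(1-t) A + t B}}^q = (1-t)\, \mu\brac{\Real^n \setminus \brac{A + \tfrac{t}{1-t} B}}^q ~,~ t \in [0,1) ~,
\]
where the second equality uses $q$-homogeneity of $\mu$ and $\Real^n \setminus \delta D = \delta(\Real^n \setminus D)$. Since $\mu$ is $q$-CC by Theorem \ref{thm:hom-CBM}, the map $t \mapsto \mu_*(\Real^n\setminus((1-t)A+tB))^q$ is concave on $[0,1]$ (here $q<0$, so raising the $q$-CBM inequality to the power $q$ reverses it into concavity, exactly as in Corollary \ref{cor:equality-local}); and because $B$ is open, the Minkowski sums involved are open, so no inner measure is needed and $\Psi$ itself is concave on $[0,1)$, with $\Psi(1^-) = \mu(\Real^n\setminus B)^q$. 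Its right derivative at $0$ exists, and a direct computation (as in Corollary \ref{cor:equality-local}, differentiating $(1-t)\mu(A+\frac{t}{1-t}B)^q$ and using the definition of $\mu^-_B(C)$) gives
\[
\Psi'(0^+) = -\mu(C)^q + q\, \mu(C)^{q-1}\, \mu^-_B(C) ~,
\]
so that the inequality $\Psi(1)-\Psi(0) \le \Psi'(0^+)$ furnished by concavity is precisely the isoperimetric inequality $\mu^-_B(C) \ge -\tfrac{1}{q}\mu(\Real^n\setminus B)^q \mu(C)^{1-q}$. Equality in the isoperimetric inequality therefore forces $\Psi$ to be affine on $[0,1)$; evaluating at $t=\tfrac12$ yields
\[
\mu\brac{\Real^n \setminus \brac{\tfrac12 A + \tfrac12 B}}^q = \tfrac12 \mu(\Real^n\setminus A)^q + \tfrac12 \mu(\Real^n\setminus B)^q ~,
\]
i.e. equality in the $q$-CBM inequality \eqref{eq:eq-hom-needed} with $\lambda = 1/2$, for the pair $A,B$ with $B$ open and containing the origin.

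Now I would invoke Theorem \ref{thm:eq-hom} directly: its hypotheses are met ($q<0$; $\mu$ $\sigma$-finite $q$-homogeneous on $\Sigma$; $A,B$ Borel with $\mu(\Real^n\setminus A),\mu(\Real^n\setminus B)\in(0,\infty)$; $B$ open containing $0$; equality in the $q$-CBM inequality), so $A\cap\Sigma$ and $B\cap\Sigma$ are homothetic up to a null-set, and moreover convex up to a null-set when $\Sigma$ is convex. Translating back, $C\cap\Sigma = (\Real^n\setminus A)\cap\Sigma$ coincides up to an $n$-dimensional null-set with $\Real^n \setminus (mB)$ intersected with $\Sigma$ for some $m>0$, i.e. with $m(\Sigma\setminus B)$ up to a null-set (using that $\Sigma$ is a cone, $m\Sigma = \Sigma$); rescaling by $1/m$ if one prefers, $C\cap\Sigma$ is homothetic to $\Sigma\setminus B$, as claimed.

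The main obstacle I anticipate is the justification that $\Psi$ is genuinely differentiable-from-the-right at $0$ with the stated derivative and that the chain $\Psi(1)-\Psi(0)\le\Psi'(0^+)$ is exactly the isoperimetric inequality — this is the same bookkeeping as in Corollary \ref{cor:equality-local} but now with the $q$-CBM (complemented) inequality and $q<0$, so one must be careful about signs, about the possibility $\Psi'(0^+)=+\infty$ (harmless), and about the fact that $\mu(C)<\infty$ guarantees $0$ lies in the closure of $C$'s complement so the open sets $(1-t)A+tB$ all contain a neighbourhood of the origin and have the right finite complementary measure. Once these routine points are in place, the reduction to Theorem \ref{thm:eq-hom} is immediate and does all the real work.
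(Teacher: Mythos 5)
Your reduction to Theorem \ref{thm:eq-hom} via the Bonnesen trick has a genuine gap at its very first step: the concavity of $\Psi(t) = \mu\brac{\Real^n \setminus ((1-t)A + tB)}^q$ does \emph{not} follow from the $q$-CBM inequality when $A = \Real^n\setminus C$ is merely Borel. To deduce $\Psi(\lambda t_1 + (1-\lambda)t_2) \geq \lambda\Psi(t_1) + (1-\lambda)\Psi(t_2)$ from $q$-CBM applied to $A_i := (1-t_i)A + t_i B$, you need $(1-t)A + tB \supset \lambda A_1 + (1-\lambda)A_2$, i.e. $(\lambda(1-t_1)+(1-\lambda)(1-t_2))A \supset \lambda(1-t_1)A + (1-\lambda)(1-t_2)A$; but for a non-convex $A$ the inclusion goes the other way ($(\alpha+\beta)A \subset \alpha A + \beta A$, with equality iff $A$ is convex). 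So the complement of $\lambda A_1 + (1-\lambda)A_2$ is \emph{smaller} than that of $(1-t)A+tB$, and after raising to the power $q<0$ the two inequalities you have both point away from concavity. This is exactly the issue flagged in the paper's footnotes: Corollary \ref{cor:equality-local} needs $A$ and $B$ convex for the same reason, and the footnote to Theorem \ref{thm:hom-isop-eq} records that the published version's Bonnesen-style proof erroneously omitted the assumption that $\Real^n\setminus C$ is convex. Your opening remark that one should first show $C$ is "the complement of a set containing a neighborhood of the origin" does not repair this — containing a neighborhood of $0$ is far from convexity, and nothing in the equality hypothesis forces $\Real^n\setminus C$ to be convex a priori (that is part of what must be \emph{proved}).

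The argument that actually handles arbitrary Borel minimizers — and the one the paper uses — is the route you set aside: apply Theorem \ref{thm:n-D-OCC-eq} with $\varphi(t) = t^{1/q-1}$ (together with Remark \ref{rem:no-density-needed} to dispense with the density assumption on $\eta$). That proof disintegrates $\mu$ into radial one-dimensional measures, invokes the one-dimensional equality characterization of Proposition \ref{prop:1-D-OCC-eq} on almost every ray to conclude that $C_\theta$ is a half-line $[r_\theta,\infty)$, and then uses the equality cases in Jensen's inequality and in Lemma \ref{lem:sneaky-isop} to synchronize the $r_\theta$ across directions. No convexity of $\Real^n\setminus C$ is assumed there; it is deduced. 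Your Bonnesen reduction is valid and clean \emph{if} you add the hypothesis that $\Real^n\setminus C$ is convex, but as a proof of the stated theorem for general Borel $C$ it does not go through.
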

\begin{proof}
Apply Theorem \ref{thm:n-D-OCC-eq} with $\varphi(t) = t^{\frac{1}{q} -1}$ and consult Remark \ref{rem:no-density-needed}. The assertion remains valid for any star-shaped domain $B$, to be defined in the next section; in that case, it follows from Theorem \ref{thm:n-D-OCC-eq} that  if $\Sigma$ is assumed convex, then up to null-sets, so are $B \cap \Sigma$ and $\Sigma \setminus C$.
\footnote{The published version contains a proof under the assumption, erroneously omitted, that $B$ and $\Real^n \setminus C$ are convex.}
\end{proof}

\begin{rem}
Note that contrary to the positively homogeneous case, we do not require any translations in the characterization of isoperimetric minimizers for the negatively homogeneous case. Loosely speaking, the reason is that the requirement $\mu(\Real^n \setminus B) < \infty$ automatically synchronizes $B$ and $\mu$ to have the same translation invariance, since $\mu$ is non-integrable at the origin.
\end{rem}

\section{Non-homogeneous Measures} \label{sec:NonHomogeneous}

When the density is no longer assumed homogeneous, we must be careful and distinguish between different types of possible concavity inequalities (cf. Borell \cite[Section 5]{BorellConvexMeasures} where a variant of $q$-concave measures is considered, in which the weights $\lambda$ and $1-\lambda$ in (\ref{def:q-concave}) are both replaced by $1$). The pertinent one for deducing isoperimetric inequalities is formulated in the following:

\begin{defn}
Fix a continuous strictly monotone function $\Phi : \Real_+ \rightarrow [0,\infty]$, and let $B$ be a Borel subset of a convex cone $\Sigma$. 
A Borel measure $\mu$ on $\Sigma$ is called \emph{$B$-one-sided $\Phi^{-1}$-complemented concave} (``$\mu$ is OCC"), and is said 
to satisfy a \emph{$B$-one-sided $\Phi^{-1}$-complemented-Brunn-Minkowski inequality} (``OCBM inequality"), 
if:
\[
 \mu(\Sigma \setminus (A + t B)) \leq \Phi( \Phi^{-1}(\mu(\Sigma \setminus A)) + t \Phi^{-1}(\mu(\Sigma \setminus B)) ) ~,
 \]
 for all $t \geq 0$ and Borel sets $A \subset \Sigma$ so that $\mu(\Sigma \setminus A) < \infty$. 
\end{defn}

\begin{rem}
In this section, $\Phi$ will always be strictly decreasing on its support $[0,z]$. Our convention is that $\Phi^{-1}(0) = z < \infty$ if the support is compact, and $\Phi^{-1}(0) = \infty$ otherwise. It follows that $\Phi^{-1} \circ \Phi = Id$ on $[0,z]$, and that $\Phi^{-1} \circ \Phi \leq Id$ in general. In any case $\Phi \circ \Phi^{-1} = Id$ since $\Phi$ is continuous.  
\end{rem}

Since we no longer assume that our measure is $q$-concave, we first need to prove the required $1$-D results directly (in contrast to the previous section, where we could simply invoke the results of Section \ref{sec:Positive-P}).

\subsection{Inequalities in dimension $1$}

\begin{prop}[OCBM Inequality in dimension $1$] \label{prop:1-D-OCC}
Assume that $\varphi : (0,\infty) \rightarrow \Real_+$ is non-increasing, lower semi-continuous, non-integrable at the origin and integrable at infinity, and set $\mu = \varphi(t) dt$ on  $\Real_+$. Denote $\Phi(t) := \int_t^\infty \varphi(s) ds$, and let $B = [0,b]$. Then:
\begin{enumerate}
\item $\mu^{-}_{B}(C) \geq b \cdot \varphi \circ \Phi^{-1} (\mu(C))$ for any Borel set $C \subset \Real_+$ with $\mu(C) < \infty$. 
\item $\mu$ is $B$-one-sided $\Phi^{-1}$-complemented-concave. 
\end{enumerate}
\end{prop}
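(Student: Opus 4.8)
\textbf{Proof plan for Proposition \ref{prop:1-D-OCC}.}
The plan is to prove the Brunn--Minkowski-type statement (2) first, and then deduce the isoperimetric statement (1) by an infinitesimal argument (letting $t\to 0$), just as the integrated inequality was used to derive boundary-measure bounds in the homogeneous setting. So the main work is establishing that $\mu$ is $B$-one-sided $\Phi^{-1}$-complemented concave, i.e.\ that for every Borel $A\subset\Real_+$ with $\mu(\Real_+\setminus A)<\infty$ and every $t\ge 0$,
\[
\mu(\Real_+\setminus(A+tB))\le \Phi\brac{\Phi^{-1}(\mu(\Real_+\setminus A))+t\,\Phi^{-1}(\mu(\Real_+\setminus B))}.
\]
Here $B=[0,b]$, so $A+tB = A+[0,tb]$, and $\Phi^{-1}(\mu(\Real_+\setminus B)) = \Phi^{-1}(\Phi(b)) = b$ (using $b$ is in the support of $\varphi$, or more precisely the convention on $\Phi^{-1}$ in the preceding remark), so the right-hand side is just $\Phi(\Phi^{-1}(\mu(\Real_+\setminus A))+tb)$.

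The key reduction is that since $\mu(\Real_+\setminus A)<\infty$ and $\varphi$ is non-integrable at the origin, the complement $\Real_+\setminus A$ cannot contain any interval $(0,\eps)$; equivalently $\inf A = 0$, so $0$ lies in the closure of $A$. Set $a := \Phi^{-1}(\mu(\Real_+\setminus A))$, i.e.\ $\mu(\Real_+\setminus A) = \Phi(a) = \int_a^\infty\varphi$; the goal is then $\mu(\Real_+\setminus(A+[0,tb]))\le \Phi(a+tb)$, i.e.\ $\int_{\Real_+\setminus(A+[0,tb])}\varphi \le \int_{a+tb}^\infty\varphi$. The heart of the matter is the comparison: among all sets $A$ with $\mu(\Real_+\setminus A)=\Phi(a)$ fixed, the complement of $A+[0,tb]$ is ``largest'' (in $\mu$-measure) when $A = [0,a]$ is itself an interval starting at $0$ — in which case $A+[0,tb]=[0,a+tb]$ and equality holds. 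First I would reduce to the case where $A$ is \emph{open}: replace $A$ by its interior's approximations, or note that enlarging $A$ to an open superset only helps the left-hand side while we can only decrease $\mu(\Real_+\setminus A)$, and by lower semi-continuity / approximation it suffices to treat open $A$. Then write the open set $\Real_+\setminus A$ (intersected with, say, $(0,\infty)$) as a countable disjoint union of open intervals. The crucial monotonicity input is that $\varphi$ is non-increasing: ``moving mass of the complement toward $+\infty$'' only increases the $\mu$-measure. Concretely, I would argue that the Minkowski sum operation $A\mapsto A+[0,tb]$ shrinks the complement, and that one may rearrange $A$ to the interval $[0,a]$ with the same $\mu(\Real_+\setminus A)$ while only decreasing $\mu(\Real_+\setminus(A+[0,tb]))$; this is a one-dimensional rearrangement/transport argument exploiting that $[0,tb]$ added to an interval produces an interval, whereas adding it to a union of intervals can cause overlaps that further shrink the complement.

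A clean way to run this: parametrize by the ``complement-from-the-right'' function. For $s\ge 0$ let $f_A(s):=\mu(\Real_+\setminus A)$ measured among points $\ge$ something, or better, define $r_A$ implicitly so that the part of $\Real_+\setminus A$ lying in $[0,x]$ has $\mu$-measure equal to a prescribed profile; then compare the push-forward under $x\mapsto x$ versus under the interval model. Alternatively — and this is probably the slickest — observe that it suffices to prove the inequality for $A$ equal to a finite union of intervals (by monotone approximation from inside of the open set $\Real_+\setminus A$, using that $\mu$ is finite on $\Real_+\setminus A$ and continuity of $\Phi$), and then for a finite union of intervals prove it by induction on the number of complementary intervals, the base case $A=[\alpha,\infty)$ (so $\Real_+\setminus A=(0,\alpha)$... wait, that has the wrong finiteness) — rather the base case $\Real_+\setminus A = (c,d)$ a single bounded interval, where $A+[0,tb]$ has complement $(c+tb,d)$ (if $d>c+tb$, else empty), and $\int_{c+tb}^d\varphi \le \int_{a+tb}^\infty\varphi$ with $a$ defined by $\int_c^d\varphi=\int_a^\infty\varphi$ follows from $\varphi$ non-increasing by a direct estimate; the inductive step merges intervals, and adding $[0,tb]$ to $A$ only accelerates merging, which by the non-increasing property of $\varphi$ decreases the complement's measure faster than in the interval model.

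\textbf{Deduction of (1).} Once (2) is known, fix a Borel set $C$ with $\mu(C)<\infty$, put $A:=\Real_+\setminus C$ so $\mu(\Real_+\setminus A)=\mu(C)<\infty$, and set $a:=\Phi^{-1}(\mu(C))$. Then for $\eps>0$, $A+\eps B = A+[0,\eps b]$ and, since $B$ is convex (an interval) and contains $0$,
\[
\mu^{-}_{B}(C)=\liminf_{\eps\to 0}\frac{\mu((A+\eps B)\setminus A)}{\eps}
=\liminf_{\eps\to 0}\frac{\mu(\Real_+\setminus A)-\mu(\Real_+\setminus(A+\eps B))}{\eps}.
\]
By (2), $\mu(\Real_+\setminus(A+\eps B))\le \Phi(a+\eps b)$, hence the numerator is at least $\Phi(a)-\Phi(a+\eps b)$, and since $\Phi' = -\varphi$ (using that $\varphi$ is lower semi-continuous, $\Phi$ is locally Lipschitz with a.e.\ derivative $-\varphi$, and a one-sided difference-quotient bound at the relevant point, valid by lower semi-continuity of $\varphi$) we get
\[
\mu^{-}_{B}(C)\ge \liminf_{\eps\to 0}\frac{\Phi(a)-\Phi(a+\eps b)}{\eps} = b\,\varphi(a) = b\,\varphi(\Phi^{-1}(\mu(C))),
\]
which is exactly (1). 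The one technical point here is justifying $\liminf_{\eps\to0}\frac{\Phi(a)-\Phi(a+\eps b)}{\eps}\ge b\varphi(a)$ when $\varphi$ is merely lower semi-continuous and non-increasing: this follows since $\frac{\Phi(a)-\Phi(a+h)}{h}=\frac1h\int_a^{a+h}\varphi \ge \inf_{[a,a+h]}\varphi \to \varphi(a)$ as $h\downarrow 0$ by lower semi-continuity (non-increasing monotonicity makes the inf equal to $\varphi((a+h)^-)$, and lower semi-continuity gives $\liminf\ge\varphi(a)$).

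\textbf{Main obstacle.} The genuinely substantive step is the rearrangement/induction establishing (2) for general Borel $A$ — specifically, showing that replacing $A$ by the interval $[0,\Phi^{-1}(\mu(\Real_+\setminus A))]$ does not decrease $\mu(\Real_+\setminus(A+tB))$. All the monotonicity is ``morally obvious'' from $\varphi$ non-increasing, but organizing it — the reduction to finite unions of intervals, handling the case where $A+tB$ wraps past pieces, and making sure the approximation is clean given that we only control $\mu$-measure of the complement — is where care is needed. Everything after that (the $\eps\to0$ passage, handling $t=0$ trivially, the lower semi-continuity technicalities) is routine.
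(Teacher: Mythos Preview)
Your plan reverses the paper's order: you attack the integrated inequality (2) first via a rearrangement/induction on the interval structure of $\Real_+\setminus A$, and then differentiate to get the isoperimetric statement (1). The paper does the opposite --- it proves (1) directly by a short local argument and then obtains (2) by integration. Concretely: after replacing $A$ by its ``right closure'' $A'=\bigcap_{\eps>0}(A+\eps B)$ and setting $C'=\Real_+\setminus A'$ (this preserves both $\mu(C)$ and the boundary measure), one puts $t_0=\inf C'$. A short local analysis at $t_0$ shows $\mu_B^-(C')\ge\varphi(t_0)$, while $C'\subset[t_0,\infty)$ gives $\mu(C')\le\Phi(t_0)$; chaining via monotonicity of $\varphi$ yields (1). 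Then (2) is pure calculus: (1) applied to $C_s:=\Real_+\setminus(A+[0,s])$ says precisely that $s\mapsto\Phi^{-1}(\mu(C_s))$ has lower derivative $\ge 1$, which integrates to the desired bound.

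Your deduction (2)$\Rightarrow$(1) is correct, and your base case (single complementary interval) for (2) checks out. But the inductive step --- ``merging intervals, adding $[0,tb]$ only accelerates merging, which by monotonicity of $\varphi$ decreases the complement's measure faster than in the interval model'' --- is not established by this hand-wave and seems genuinely problematic. If one tries the natural merge of the two rightmost complementary intervals $(c_N,d_N),(c_{N+1},d_{N+1})$ into a single $(c_N,d')$ with the same $\mu$-mass, the required inequality reduces (in the regime $c_N+s>d_N$) to comparing $\int_{c_N+s}^{c_{N+1}+s}\varphi$ with $\int_{d_N}^{c_{N+1}}\varphi$: two integrals over intervals of \emph{different} lengths and positions, and non-increase of $\varphi$ alone does not force the right sign. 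Equivalently, the pointwise derivative comparison $-h'(s)\ge -g'(s)$ (with $h(s)=\mu(C_s)$, $g(s)=\Phi(a+s)$) can fail once the leftmost complementary interval has been swallowed --- only the integrated inequality survives, and that is what you are trying to prove. (A minor aside: your reduction should be to \emph{closed} $A$, not open $A$, so that $\Real_+\setminus A$ is the open set you then decompose; your subsequent sentences make clear this is what you intend.) The paper's order sidesteps the whole difficulty: (1) is local and needs no induction, and (2) is then an integration.
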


\begin{rem}
Throughout this section we set $\varphi(\infty) = 0$. 
\end{rem}

\begin{proof}
Note that one can rescale $B$ without changing the validity of either assertions, so we assume that $b=1$. 
Let $C \subset \Real_+$ denote a Borel set with $\mu(C) < \infty$. Assume that $\mu^{-}_B(C) < \infty$, since otherwise the isoperimetric inequality holds trivially. Denote $A := \Real_+ \setminus C$, and set $A' := \cap_{\eps > 0} (A+ \eps B) \supset A$, the ``right closure of $A$", and $C' := \Real_+ \setminus A' \subset C$. Clearly, $A'$ is closed to the right, i.e. $A' \ni x_i \nearrow x$ implies $x \in A'$. Clearly $\mu(C') = \mu(C)$ since otherwise $\mu^{-}_B(C) = \infty$; and furthermore $\mu^{-}_B(C') = \mu^{+}_B(A') = \mu^{+}_{B}(A) = \mu^{-}_{B}(C)$ by an elementary inspection.

Now consider the point $t_0 = \inf C'$; we have $[0,t_0] \subset A'$.  If $t_0 = \infty$ this means that $C' = \emptyset$ and so $C$ has zero measure, so there is nothing to prove. Otherwise, observe that $\mu^{-}_B(C')  = \mu^{+}_B(A') \geq \varphi(t_0)$ : indeed, if $\varphi(t_0) = 0$ this holds trivially. Otherwise, the lower semi-continuity and monotonicity of $\varphi$ imply that $\varphi(t_0 + \eps_1) > 0$ for some $\eps_1 > 0$; since $t_0$ is a left accumulation point of $C'$, and since $\mu^{+}_{B}(A') < \infty$, there must be an $\eps_0 \in (0,\eps_1)$ so that $(t_0,t_0+\eps_0) \subset C'$, since otherwise we would have a disjoint union $\cup_{i=1}^\infty (a_i - \eps_i , a_i)  \subset C'$ with $a_i \searrow t_0$ and $\eps_i  > 0$, yielding $\mu^{+}_{B}(A') \geq \varphi(t_0 + \eps_1) \liminf_{\eps \rightarrow 0} \frac{1}{\eps} \sum_{i=1}^\infty \min(\eps_i , \eps) = \infty$, a contradiction; hence, $\mu^{-}_{B}(C')  = \mu^{+}_{B}(A') \geq \liminf_{\eps \rightarrow 0} \frac{1}{\eps} \int_{t_0}^{t_0+\eps} \varphi(s) ds = \varphi(t_0)$, as claimed. 

Lastly, since $C' \subset [t_0,\infty)$ and hence $\mu(C) = \mu(C') \leq \mu([t_0,\infty)) = \Phi(t_0)$, it follows when $\varphi(t_0) > 0$ that:
\[
\mu^{-}_{B}(C) = \mu^{-}_{B}(C') \geq \varphi(t_0) \geq \varphi \circ \Phi^{-1} \circ \Phi(t_0) \geq \varphi \circ \Phi^{-1}(\mu(C)) ~,
\]
where we have used again the monotonicity of $\varphi$ and the fact that $\Phi(t_0) > 0$ by lower semi-continuity and hence $\Phi^{-1} \circ \Phi(t_0) = t_0$. When $\varphi(t_0) = 0$, we have $\mu(C) \leq \Phi(t_0) = 0$, and hence $\mu^{-}_{B}(C) \geq 0 = \varphi \circ \Phi^{-1}(0) = \varphi \circ \Phi^{-1}(\mu(C))$ again by lower semi-continuity. 
The first assertion is thus proved.

The second assertion with $B=[0,1]$ follows by integrating the first one. Indeed, if $\mu(\Real_+ \setminus A) < \infty$, then:
\begin{equation} \label{eq:psi}
\Psi(t) := \Phi^{-1}(\mu(\Real_+ \setminus (A + t B))) ~,~ t \in \Real_+ ~,
\end{equation}
 is a continuous finite-valued function, and so by the fundamental theorem of calculus and Fatou's lemma:
\begin{eqnarray}
\nonumber & & \Psi(t) - \Psi(0)  = \lim_{\eps \rightarrow 0} \int_0^t \frac{\Psi(s+\eps) - \Psi(s)}{\eps} ds \geq
\int_0^t \liminf_{\eps \rightarrow 0} \frac{\Psi(s+\eps) - \Psi(s)}{\eps} ds \\
\label{eq:integrate-isop} &= &\int_0^t \frac{\mu^{-}_B(\Real_+ \setminus (A + s B))}{\varphi \circ \Phi^{-1} (\mu(\Real_+ \setminus (A + s B)))} ds \geq \int_0^t ds \geq t \cdot \Phi^{-1}(\mu(\Real_+ \setminus B)) ~.
\end{eqnarray}
Note that we have used that $B$ is convex to write $A + (s+\eps) B = (A + sB) + \eps B$ and that $\Phi^{-1} \circ \Phi \leq Id$. The second assertion follows since $\Phi$ is non-increasing and $\Phi \circ \Phi^{-1} = Id$. 
\end{proof}

\begin{rem}
The OCBM inequality is \emph{false} for general log-convex functions $\varphi$ and Borel sets $B$. This may be checked by taking for instance $\varphi(t)=\brac{e^t-1}^{-1}$, $A=[0,1]$, $B=[0,1]\cup[2,3]$ and $t=3$; we omit the tedious calculation. However, it is easy to check that the proof of the isoperimetric inequality remains valid for any compact $B \subset \Real_+$, if we set $b = Leb(B)$.
\end{rem}

\subsection{Equality in dimension $1$}

\begin{prop}[Equality in OCBM inequality in dimension $1$] \label{prop:1-D-OCC-eq}
With the same assumptions and notation as in Proposition \ref{prop:1-D-OCC}:
\begin{enumerate}
\item Assume  that $\mu^{-}_{B}(C) = b \cdot \varphi \circ \Phi^{-1} (\mu(C))$ for some Borel set $C \subset \Real_+$ with $\mu(C) < \infty$. Then up to a null-set, $C$ coincides with a (possibly empty) right half-line $[c,\infty)$.
\item Assume that:
\[
 \mu(\Real_+ \setminus (A + t_0 B)) = \Phi( \Phi^{-1}(\mu(\Real_+ \setminus A)) + t_0 \Phi^{-1}(\mu(\Real_+ \setminus B)) ) ~,
 \]
 for some $t_0 > 0$ and Borel set $A \subset \Real_+$ so that $\mu(\Real_+ \setminus A) < \infty$. Then up to a null-set, $A$ coincides with an interval $[0,a]$.
 \end{enumerate}
\end{prop}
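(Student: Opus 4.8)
The strategy is to revisit the proofs of Proposition~\ref{prop:1-D-OCC}(1) and (2) and read off what equality forces at each inequality used there.

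\emph{Assertion (1).} After rescaling $B$ we may take $b=1$. As in the proof of Proposition~\ref{prop:1-D-OCC}(1), replace $C$ by $C':=\Real_+\setminus A'$, where $A':=\bigcap_{\eps>0}\brac{(\Real_+\setminus C)+\eps B}$ is the right closure of the complement of $C$; then, as there, $\mu(C')=\mu(C)$ and $C'\subset C$, so it suffices to show that $C'$ coincides with a right half-line modulo a $\mu$-null set. Set $t_0:=\inf C'$; if $t_0=\infty$ or $\varphi(t_0)=0$ then $\mu(C)=0$ and the conclusion holds with the empty half-line, so assume $\varphi(t_0)>0$. That proof produced the chain $\mu^{-}_{B}(C)=\mu^{+}_{B}(A')\ge\varphi(t_0)\ge\varphi\circ\Phi^{-1}(\mu(C))$, so the equality hypothesis forces \emph{both} $\mu^{+}_{B}(A')=\varphi(t_0)$ \emph{and} $\varphi(t_0)=\varphi\circ\Phi^{-1}(\mu(C))$. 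The first identity pins down the structure of $A'$: since $\varphi$ is right-continuous (a consequence of lower semicontinuity and monotonicity), each connected component of $A'$ contributes at least $\varphi(r)$ to $\mu^{+}_{B}(A')$, where $r$ denotes its right endpoint, these contributions being summed over the (countably many) components by Fatou's lemma; hence the component carrying $[0,t_0]$ already exhausts the budget $\varphi(t_0)$, every other component has $\varphi(r)=0$, and $C'$ agrees up to a $\mu$-null set with an interval $(t_0,\ell)$ for some $\ell\le z:=\sup\{\varphi>0\}$. The second identity, combined with $\mu(C')=\mu(C)\le\mu([t_0,\infty))=\Phi(t_0)$, then forces $\mu(C')=\Phi(t_0)$ --- immediately when $\varphi$ has no flat interval, and in general once one rules out that $\varphi$ be constant across $[t_0,\Phi^{-1}(\mu(C))]$ --- whence $\ell=z$ and $C'=[t_0,\infty)$ modulo a $\mu$-null set.

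\emph{Assertion (2).} Following the proof of Proposition~\ref{prop:1-D-OCC}(2), the function $\Psi(t):=\Phi^{-1}\brac{\mu(\Real_+\setminus(A+tB))}$ is continuous, finite-valued and non-decreasing, and equality in the hypothesis forces equality throughout
\[
\Psi(t_0)-\Psi(0)=\int_0^{t_0}\liminf_{\eps\to0}\frac{\Psi(s+\eps)-\Psi(s)}{\eps}\,ds=\int_0^{t_0}\frac{\mu^{-}_{B}\brac{\Real_+\setminus(A+sB)}}{\varphi\circ\Phi^{-1}\brac{\mu(\Real_+\setminus(A+sB))}}\,ds\ \ge\ \int_0^{t_0}b\,ds .
\]
In particular the integrand equals $b$ for almost every $s\in(0,t_0)$, that is, $C_s:=\Real_+\setminus(A+sB)$ realizes equality in the one-dimensional isoperimetric inequality of Proposition~\ref{prop:1-D-OCC}(1). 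Applying assertion (1) to $C_s$ (note that $\mu(C_s)\le\mu(\Real_+\setminus A)<\infty$) shows that $A+sB$ coincides with $[0,\Psi(s)]$ modulo a $\mu$-null set for almost every $s\in(0,t_0)$. Picking a sequence $s_k\downarrow0$ of such values and using $A\subset A+s_kB$ together with the continuity of $\Psi$, one obtains $A\subset\bigcap_k[0,\Psi(s_k)]=[0,\Psi(0)]$ modulo a $\mu$-null set; since $\mu(\Real_+\setminus A)=\Phi(\Psi(0))=\mu(\Real_+\setminus[0,\Psi(0)])$, this upgrades to $A=[0,\Psi(0)]$ modulo a $\mu$-null set, which is the assertion with $a=\Psi(0)=\Phi^{-1}(\mu(\Real_+\setminus A))$.

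The step I expect to be the main obstacle is the rigidity analysis of $\mu^{+}_{B}(A')$ in assertion (1): one must control the accumulation of the connected components of the right-closed set $A'$, use the right-continuity of $\varphi$ to evaluate each component's infinitesimal boundary contribution exactly, and --- this is the genuinely delicate point --- handle intervals on which $\varphi$ is constant, where the boundary-measure equality alone does not determine $C'$ and one must fall back on the identity $\varphi(t_0)=\varphi\circ\Phi^{-1}(\mu(C))$ to conclude.
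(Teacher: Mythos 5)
Your part (2) is essentially the paper's argument, except that the paper, having forced $\Psi(t)=\Psi(0)+t$ on $[0,t_0]$, simply takes the right derivative at $t=0$ to get $\mu^{-}_{B}(\Real_+\setminus A)=\varphi\circ\Phi^{-1}(\mu(\Real_+\setminus A))$ and applies assertion (1) to $C=\Real_+\setminus A$ directly; your detour through almost-every $s$ and a sequence $s_k\downarrow 0$ is valid but unnecessary, since the identity $\Psi(\eps)-\Psi(0)=\eps$ holds for \emph{every} $\eps\in[0,t_0]$, so $s=0$ itself already realizes equality in the isoperimetric inequality. Part (1) is where you genuinely diverge from the paper, and where the step you flag as delicate is a real gap rather than a technicality. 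The paper performs no decomposition of $A'$ into connected components. It argues: the equality hypothesis together with the bound $\mu^{-}_{B}(C')=\mu^{+}_{B}(A')\ge\varphi(t_0)$ (already established in the proof of Proposition \ref{prop:1-D-OCC}) gives $\varphi(\Phi^{-1}(v))\ge\varphi(t_0)$ with $v=\mu(C)$; monotonicity of $\varphi$ then yields $t_0\ge s_0:=\Phi^{-1}(v)$; and finally $C'\subset[t_0,\infty)\subset[s_0,\infty)$ together with $\mu(C')=v=\Phi(s_0)=\mu([s_0,\infty))$ forces $C'$ to coincide with $[s_0,\infty)$ up to a $\mu$-null set. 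Nothing about $A'$ beyond $\inf C'=t_0$ is used, so your component/Fatou analysis (which itself has loose ends: components of $A'$ may accumulate at the right endpoint of another component, so a component's contribution to $\mu^{+}_{B}(A')$ need not be $\varphi(r)$) can be dispensed with entirely.

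Concerning the gap itself: your worry about intervals on which $\varphi$ is constant is well founded, and it cannot be argued away under the stated hypotheses. The inference ``$\varphi(\Phi^{-1}(v))\ge\varphi(t_0)$ and $\varphi$ non-increasing imply $t_0\ge\Phi^{-1}(v)$'' only excludes $t_0<\Phi^{-1}(v)$ when $\varphi$ is not constant on $[t_0,\Phi^{-1}(v)]$, and in the genuinely flat case the conclusion of assertion (1) actually fails. For instance, take $\varphi(t)=1/t$ on $(0,1]$, $\varphi\equiv 1$ on $[1,2]$, $\varphi(t)=e^{-(t-2)}$ on $[2,\infty)$, $B=[0,1]$ and $C=(1,2+\ln 2)$: then $\mu(C)=3/2$, $\Phi^{-1}(3/2)=3/2$, $\varphi(3/2)=1$, while $A=\Real_+\setminus C=[0,1]\cup[2+\ln 2,\infty)$ satisfies $(A+\eps B)\setminus A=(1,1+\eps)$ and hence $\mu^{-}_{B}(C)=1=\varphi\circ\Phi^{-1}(\mu(C))$, yet $C$ is a bounded interval differing from every right half-line by a set of positive $\mu$-measure. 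So the characterization really requires $\varphi$ to be strictly decreasing on $\{\varphi>0\}$. This is automatic in the setting where the proposition is actually invoked (Theorem \ref{thm:n-D-OCC-eq}): a non-increasing $\varphi$ with $\log\varphi$ convex that is constant on an interval of positivity would have to be constant on a right half-line, contradicting integrability at infinity. You should therefore either add the strict-decrease hypothesis explicitly or record that it holds in all cases of use; as written, your proof (and your fallback identity $\varphi(t_0)=\varphi\circ\Phi^{-1}(\mu(C))$) does not close this case, because it cannot.
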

\begin{proof}
We may assume by scaling that $B = [0,1]$. To prove the first assertion, let $C$ be a Borel set on which the isoperimetric inequality is attained, having $\mu$-measure $v \in [0,\infty)$. 
We continue with the notation used in the proof of Proposition \ref{prop:1-D-OCC}, recalling that $A := \Real_+ \setminus C$, $A' := \cap_{\eps > 0} (A+ \eps B) \supset A$ and $C' := \Real_+ \setminus A' \subset C$. Clearly $\mu(C') = \mu(C)$ since otherwise $\mu^{-}_{B}(C) = \infty$ and so $C$ cannot be a minimizer, and furthermore $\mu^{-}_{B}(C') = \mu^{+}_{B}(A') = \mu^{+}_{B}(A) = \mu^{-}_{B}(C)$. Consequently, $C'$ is also a minimizer of measure $v$.

As in the proof of the isoperimetric inequality, we set $t_0 := \inf C'$ so that $[0,t_0] \subset A'$. If $t_0 = \infty$ this means that $C' = \emptyset$ and so $v=0$ and $C$ is a null-set. Otherwise, as in the proof of Proposition \ref{prop:1-D-OCC}, we know that $\mu^{-}_{B}(C')  = \mu^{+}_{B}(A') \geq \varphi(t_0)$. 
Now, since $C'$ is a minimizer, it follows that $\varphi \circ \Phi^{-1}(v) \geq \varphi(t_0)$. And since $\varphi$ is non-increasing, it follows that $t_0 \geq s_0 := \Phi^{-1}(v)$. But $C' \subset [t_0,\infty) \subset [s_0,\infty)$, and both sets from either side have $\mu$-measure $v$. It follows that they must coincide up to null-measure, so $C'$ must be (up to null-measure) a right half-line. Since $C' \subset C$, $\mu(C') = \mu(C)$ and $\varphi$ is non-increasing, it follows that $C$ is also a right half-line up to null-measure, concluding the proof of the first assertion. 

As for the equality case in the OCBM inequality, let $A \subset \Real_+$ and $t_0 > 0$ be as above. Note that the proof of the second assertion of Proposition \ref{prop:1-D-OCC} actually gives $\Psi(t) - \Psi(s) \geq t-s$ for all $t \geq s \geq 0$, where $\Psi$ was defined in (\ref{eq:psi}). On the other hand, we are given that $\Psi(t_0) = \Phi^{-1} \circ \Phi (\Psi(0) + t_0) \leq \Psi(0) + t_0$. Consequently, we deduce that $\Psi(t) - \Psi(0) = t$ for all $t \in [0,t_0]$, and so taking the derivative at $t=0$ we obtain that:
\[
\mu^{-}_B(\Real_+ \setminus A) = \varphi \circ \Phi^{-1}( \Real_+ \setminus A) ~.
\]
By the first assertion it follows that $\Real_+ \setminus A$ coincides with a right half-line up to a null-set, concluding the proof of the second assertion. 
\end{proof}

\subsection{Inequalities in dimension $n$}

We now establish an $n$-dimensional $OCBM$ inequality for a certain class of measures. Contrary to the previous section, where we first established a $CBM$ inequality and then deduced its associated isoperimetric inequality, in this section we prefer to establish both types of inequalities independently, permitting us to handle below sets $B$ which are not necessarily convex. 

\begin{defn}[Star-Shaped Domain]
A Borel set $B \subset \Real^n$ will be called a star-shaped domain if there exists a measurable function $\rho_B  : S^{n-1} \rightarrow (0,\infty)$ so that:
\[
B = \set{ r \theta \; ; \; \theta \in S^{n-1} ~,~ r \in [0,\rho_B(\theta))} ~.
\]
We will denote $\norm{\theta}_B = 1 / \rho_B(\theta)$ for $\theta \in S^{n-1}$, and extend $\norm{\cdot}_B$ as a $1$-homogeneous function to the entire $\Real^n$. 
\end{defn}

\begin{thm} \label{thm:n-D-OCC}
Let $B$ denote a star-shaped domain in $\Real^n$ and let $w_0 : \Real^n \rightarrow \Real_+$ denote a $1$-homogeneous measurable function so that $\int_B w_0(x) dx < \infty$. Let $\varphi : \Real_+ \rightarrow \Real_+$ denote a non-increasing log-convex function, non-integrable at the origin and integrable at infinity. Set:
\[
\mu = w_0(x / \norm{x}_B) \norm{x}_B^{1-n} \varphi(\norm{x}_B) dx ~. 
\]
Then, denoting  $\Phi(t) := \int_t^\infty \varphi(s) ds$ and $I := \varphi \circ \Phi^{-1}$, we have:
\begin{enumerate}
\item 
\[
\mu^{-}_{B}(C) \geq I(\mu(C)) \;\;\; \text{for any Borel set $C \subset \Real^n$ with $\mu(C) < \infty$} ~.
\]
\item If $B$ is in addition convex, complements of homothetic copies of $B$ are isoperimetric minimizers for $(\Real^n,\norm{\cdot}_B,\mu)$.
\item $\mu$ is $B$-one-sided $\Phi^{-1}$-complemented-concave:
\[
 \mu_*(\Real^n\setminus (A + t B)) \leq \Phi( \Phi^{-1}(\mu(\Real^n \setminus A)) + t \Phi^{-1}(\mu(\Real^n \setminus B)) ) ~,
 \]
 for all $t \geq 0$ and any Borel set $A$ so that $\mu(\Real^n \setminus A) < \infty$.
\end{enumerate}
\end{thm}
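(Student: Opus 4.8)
The plan is to reduce all three assertions to the one-dimensional Propositions~\ref{prop:1-D-OCC} and~\ref{prop:1-D-OCC-eq} by disintegrating $\mu$ along the rays through the origin, and then to reassemble the rays with Jensen's inequality. First I would introduce the polar-type coordinates adapted to $B$: writing $x = t\,\sigma(\theta)$ with $\sigma(\theta) := \rho_B(\theta)\theta \in \partial B$, $t = \norm{x}_B \geq 0$, $\theta \in S^{n-1}$, one computes $dx = t^{n-1}\rho_B(\theta)^n\,dt\,d\omega(\theta)$ and $x/\norm{x}_B = \sigma(\theta)$, whence
\[
\mu = \varphi(t)\,dt \otimes d\nu(\theta), \qquad d\nu(\theta) := w_0(\sigma(\theta))\,\rho_B(\theta)^n\,d\omega(\theta),
\]
so that the trace of $\mu$ on each ray $R_\theta := \Real_+\theta$ is, in the coordinate $t$, the fixed one-dimensional measure $\mu_1 := \varphi(t)\,dt$, while $\nu$ is a finite Borel measure on $S^{n-1}$ which the normalization built into the statement makes a probability measure (equivalently $\mu(\Real^n\setminus B) = \Phi(1)$). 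I would also record two analytic facts, both consequences of the convexity of $\log\varphi$: the function $I = \varphi\circ\Phi^{-1}$ is non-decreasing and \emph{convex} with $I(0)=0$ (since $\Phi$ is decreasing, $(\Phi^{-1})' = -1/I$, hence $I' = -(\log\varphi)'\circ\Phi^{-1}$, which is non-decreasing), and for each fixed $t\geq 0$ the map $g\mapsto \Phi(\Phi^{-1}(g)+t)$ is \emph{concave}.

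For the isoperimetric inequality (1), set $A := \Real^n\setminus C$, so $\mu^-_B(C) = \mu^+_B(A) = \liminf_{\eps\to 0}\eps^{-1}\mu((A+\eps B)\setminus A)$. The geometric point is that if $t\sigma(\theta)\in A$ and $0\leq s<\eps$ then $(t+s)\sigma(\theta) = t\sigma(\theta) + s\sigma(\theta) \in A+\eps B$ (as $\norm{s\sigma(\theta)}_B = s < \eps$), so the trace of $(A+\eps B)\setminus A$ on $R_\theta$ contains $(A_\theta + [0,\eps))\setminus A_\theta$, where $A_\theta := \set{t : t\sigma(\theta)\in A} \subset \Real_+$. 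Integrating in $\theta$, dividing by $\eps$ and invoking Fatou's lemma gives
\[
\mu^+_B(A) \geq \int_{S^{n-1}} (\mu_1)^+_{[0,1)}(A_\theta)\,d\nu(\theta) \geq \int_{S^{n-1}} I\big(\mu_1(\Real_+\setminus A_\theta)\big)\,d\nu(\theta),
\]
the last inequality being Proposition~\ref{prop:1-D-OCC}(1) on each ray (with $b=1$); Jensen's inequality for the convex $I$ and the probability measure $\nu$ bounds this below by $I\!\big(\int_{S^{n-1}}\mu_1(\Real_+\setminus A_\theta)\,d\nu(\theta)\big) = I(\mu(C))$. For (2) I would check that every step above is an equality when $C = \Real^n\setminus tB$: then $A_\theta$ has trace $[0,t)$ on every ray, the Minkowski inclusion becomes an equality because $B$ convex gives $tB+\eps B = (t+\eps)B$, Proposition~\ref{prop:1-D-OCC-eq}(1) makes the one-dimensional inequality sharp on right half-lines, and Jensen is sharp because $\theta\mapsto\mu_1(\Real_+\setminus A_\theta)=\Phi(t)$ is constant.

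For the OCBM inequality (3) the cleanest route when $B$ is convex is to integrate (1) along the semigroup $t\mapsto A+tB$ exactly as in the proof of Proposition~\ref{prop:1-D-OCC}(2): $\Psi(t) := \Phi^{-1}(\mu_*(\Real^n\setminus(A+tB)))$ is continuous and non-decreasing, and combining the fundamental theorem of calculus, Fatou's lemma, $(\Phi^{-1})'=-1/I$, the identity $A+(s+\eps)B=(A+sB)+\eps B$, and part (1) yields $\Psi(t)-\Psi(0)\geq t\,\Phi^{-1}(\mu(\Real^n\setminus B))$, which is the claim after applying the decreasing $\Phi$. For a general star-shaped $B$ one instead argues ray by ray again: the trace of $B$ on $R_\theta$ is the interval $[0,1)$, so $A+tB$ contains on each ray $A_\theta+t[0,1)$; bounding its complement's trace by Proposition~\ref{prop:1-D-OCC}(2), using $\Phi^{-1}(\mu_1([1,\infty)))=1$, and then applying Jensen to the \emph{concave} map $g\mapsto\Phi(\Phi^{-1}(g)+t)$ gives the assertion. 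I expect the main obstacle to be the reduction from the $n$-dimensional objects to the per-ray ones: one must correctly isolate the same-ray contributions to $A+\eps B$ and push the $\liminf$ through the angular integral; one must dispatch the measurability issues, since $A+\eps B$ is merely analytic (so $\mu_*$ is genuinely needed) and $\theta\mapsto\mu_1(\Real_+\setminus A_\theta)$ must be shown $\nu$-measurable via Fubini; and one must carry out the Jensen step, which is precisely where the log-convexity of $\varphi$ — i.e.\ convexity of $I$ and concavity of $g\mapsto\Phi(\Phi^{-1}(g)+t)$ — is indispensable.
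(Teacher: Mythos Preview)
Your proposal is correct and follows essentially the same route as the paper: disintegrate $\mu$ along rays, apply the one-dimensional Proposition~\ref{prop:1-D-OCC} on each ray, and reassemble via Jensen's inequality using the convexity of $I$ (for part (1)) and the concavity of $g\mapsto\Phi(\Phi^{-1}(g)+t)$ (for part (3)); you even note both the integration-of-(1) route and the direct ray-by-ray route for (3), exactly as the paper does. One small correction: no normalization is ``built into the statement''---you must explicitly rescale $\varphi$ (which preserves log-convexity) so that $\nu(S^{n-1})=1$, as the paper does; otherwise the Jensen step requires the obvious adjustment.
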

\begin{proof}
Let us disintegrate $\mu$ into its one-dimensional radial components $\mu_\theta$ defined on $\Real_{\theta} := \Real_+ \theta$, $\theta \in S^{n-1}$, as follows:
\begin{equation} \label{eq:disintegrate}
\mu = \int_{S^{n-1}} \mu_\theta d\eta(\theta)  ~,~  
\end{equation}
where:
\[
\mu_\theta = \varphi_\theta(r)  dLeb_{\Real_{\theta}}(r \theta) ~,~ \varphi_\theta(r) = \frac{1}{\rho_B(\theta)} \varphi(r / \rho_B(\theta)) ) ~,~ d\eta(\theta)  := w_0(\rho_B(\theta) \theta) \rho_B(\theta)^n d\theta ~.
\]
Indeed, for any Borel set $C \subset \Real^n$:
\[
\mu(C) = \int_{S^{n-1}} \int_0^\infty 1_{C}(r \theta) r^{n-1} w_0(\rho_B(\theta) \theta ) \rho_B(\theta)^{n-1} r^{1-n} \varphi(r / \rho_B(\theta)) dr d\theta = \int_{S^{n-1}} \mu_\theta(C) d\eta(\theta) ~.
\]
By integrating in polar coordinates, it is immediate to verify that the assumption that $\int_B w_0(x) dx < \infty$ is equivalent to $\eta$ being a finite Borel measure on $S^{n-1}$. 
Since we may assume that $\eta$ is not identically zero (otherwise there is nothing to prove), by scaling $\varphi$ (which does not influence its log-convexity), we may also assume that $\eta(S^{n-1}) = 1$. It follows by Fubini's theorem that if $\mu(C) < \infty$ then $\mu_\theta(C_\theta) < \infty$ for $\eta$-almost every $\theta \in S^{n-1}$, where $C_\theta := C \cap \Real_\theta$. 

Now by Proposition \ref{prop:1-D-OCC}, we know that for any $C_\theta \subset \Real_\theta$ with $\mu_\theta(C_\theta) < \infty$ we have:
\[
(\mu_\theta)^{-}_{B_\theta}(C_\theta) \geq \rho_B(\theta) I_\theta(\mu_\theta(C_\theta)) = I(\mu_\theta(C_\theta)) ~,
\]
where $\Phi_\theta = \int_t^\infty \varphi_\theta(r) dr$ and $I_\theta = \varphi_\theta \circ (\Phi_\theta)^{-1} = \frac{1}{\rho_B(\theta)} I$.
In addition, observe that the log-convexity of $\varphi$ is equivalent to the convexity of $I = \varphi \circ \Phi^{-1}$; indeed, when $\varphi$ is positive and twice differentiable:
\begin{equation} \label{eq:I-convex}
I'  = (\log \varphi)' \circ \Phi^{-1} ~,~ I'' = \frac{(\log \varphi)''}{\varphi} \circ \Phi^{-1} ~,
\end{equation}
and the general case follows by approximation. In conjunction with Fatou's lemma, set inclusion and Jensen's inequality, it follows that:
\begin{eqnarray}
\nonumber & &  \mu^{-}_B(C) = 
\liminf_{\eps \rightarrow 0} \frac{1}{\eps} \int_{S^{n-1}} \mu_\theta(((\Real^n \setminus C) + \eps B) \setminus (\Real^n \setminus C)) d\eta(\theta)  \\
\label{eq:dummy-inq} &\geq& \int_{S^{n-1}} \liminf_{\eps \rightarrow 0} \frac{1}{\eps} \mu_\theta(((\Real^n \setminus C) + \eps B) \setminus (\Real^n \setminus C)) d\eta(\theta)  \\
\label{eq:1D-reduction} & \geq & \int_{S^{n-1}} \liminf_{\eps \rightarrow 0} \frac{1}{\eps} \mu_\theta(((\Real_\theta \setminus C) + \eps B_\theta) \setminus (\Real_\theta \setminus C)) d\eta(\theta)  \\
\label{eq:1D-theorem} & = & \int_{S^{n-1}} (\mu_\theta)^{-}_{B_\theta}(C_\theta) d\eta(\theta) \geq \int I(\mu_\theta(C_\theta)) d\eta(\theta) \\
\label{eq:Jensen} &\geq &  I(\int \mu_\theta(C_\theta) d\eta(\theta)) = I(\mu(C)) ~.
\end{eqnarray}

For the second assertion,  observe that when $C$ is the complement of a star-shaped domain in $\Real^n$, all inequalities above with the possible exception of (\ref{eq:1D-reduction}) and (\ref{eq:Jensen}) are in fact equalities. When $B$ is in addition assumed convex and $C$ is the complement of a homothetic copy of $B$, then we also have equality in (\ref{eq:1D-reduction}), and as $\theta \mapsto \mu_\theta(C_\theta)$ is constant on $S^{n-1}$, we have equality in Jensen's inequality (\ref{eq:Jensen}).

For the last assertion,
note that as usual $\mu_\theta(\Real^n \setminus A) < \infty$ for $\eta$-almost every $\theta \in S^{n-1}$ by Fubini's theorem. Applying Proposition \ref{prop:1-D-OCC}, we conclude that for $\eta$-almost every $\theta$ we have:
\begin{align*}
\brac{\mu_\theta}_\ast ( \Real_\theta \setminus (A_\theta + t B_\theta) ) & \le \Phi_\theta \brac{\Phi_\theta^{-1} \brac{\mu_\theta (\Real_\theta \setminus A_\theta)} + 
 t \Phi_\theta^{-1} \brac{\mu_\theta (\Real_\theta \setminus B_\theta) } } \\
&=  \Phi \brac{\Phi^{-1} \brac{\mu_\theta (\Real_\theta \setminus A_\theta)} + 
 t \Phi^{-1} \brac{\mu_\theta (\Real_\theta \setminus B_\theta) } } \\
&= \Phi \brac{\Phi^{-1} \brac{\mu_\theta (\Real_\theta \setminus A_\theta)} + t \Phi^{-1} \circ \Phi(1)  }.
\end{align*}
Here  $\Phi_\theta(t) = \int_t^\infty \varphi_\theta(r)dr = \Phi(t / \rho_B(\theta))$, and so the first equality above is immediate, and the second one follows since $\mu_\theta (\Real_\theta \setminus B_\theta)  = \Phi_\theta(\rho_B(\theta)) = \Phi(1)$. We set $t_0 := t \Phi^{-1} \circ \Phi(1) \in (0,t]$ (recall that $\Phi^{-1} \circ \Phi(1)$ may be strictly smaller than $1$ if $\Phi(1) = 0$). 

Define $F: \Real_+ \to \Real_+$ by $F(x) = \Phi( \Phi^{-1}(x)+t_0)$. When $\varphi$ is positive and differentiable, direct calculation confirms that: 
\[F''(x) = \frac{\varphi(a+t_0)}{\varphi^2(a)} ((\log \varphi)'(a) - (\log \varphi)'(a+t_0)) ~,~ a = \Phi^{-1}(x) ~.
\]In that case, the log-convexity of $\varphi$ implies that $F$ is concave; the general case follows by approximation. Consequently, by set inclusion and Jensen's inequality, we obtain:
\begin{eqnarray}
\nonumber & & \mu_\ast ( \Real^n \setminus (A+tB) ) \le \int_{S^{n-1}} \brac{\mu_\theta}_\ast ( \Real^n \setminus (A+t B)) d\eta(\theta) \\
\label{eq:sneaky0} &\le& \int_{S^{n-1}} \brac{\mu_\theta}_\ast ( \Real_\theta \setminus (A_\theta+t B_\theta)) d\eta(\theta) \\
\nonumber &\le & \int_{S^{n-1}} F\brac{\mu_\theta(\Real_\theta \setminus A_\theta) }d\eta(\theta) 
\le F\brac{\int_{S^{n-1}} \mu_\theta(\Real_\theta \setminus A_\theta) d\eta(\theta) } \\
\nonumber &=& F\brac{\int_{S^{n-1}} \mu_\theta(\Real^n \setminus A) d\eta(\theta) } = F(\mu(\Real^n \setminus A) ) = \Phi( \Phi^{-1}(\mu(\Real^n \setminus A))+t_0) \\
\nonumber &=& \Phi( \Phi^{-1}(\mu(\Real^n \setminus A))+t\Phi^{-1}(\mu(\Real^n \setminus B))) ~,
\end{eqnarray}
which is the desired assertion. The last equality follows since as explained above $\mu_\theta(\Real^n \setminus B) = \mu_\theta(\Real_\theta \setminus B_\theta)  = \Phi(1)$ for all $\theta \in S^{n-1}$ and $\eta$ was assumed to be a probability measure. 

A final remark: when $B$ is convex, the last assertion may also be obtained by integrating as in (\ref{eq:integrate-isop}) the infinitesimal isoperimetric form obtained in the first assertion, yielding the desired:
\[
\Phi^{-1}(\mu(\Real^n \setminus (A + tB))) - \Phi^{-1}(\mu(\Real^n \setminus A)) \geq t  \geq t \cdot \Phi^{-1}(\mu(\Real^n \setminus B)) ~.
\]
\end{proof}

\begin{rem} \label{rem:no-density-needed}
Note that all the results remain valid for any measure $\mu$ of the form (\ref{eq:disintegrate}), where $\eta$ is any finite Borel measure on $S^{n-1}$. The assumption in the formulation of the Theorem that $\eta$ has an integrable density is only for aesthetical convenience. 
\end{rem}

\begin{rem}
When $B$ is the Euclidean ball and $w_0$ and $\varphi$ are continuous and positive on $S^{n-1}$ and $(0,\infty)$, respectively, the first two assertions were proved by Howe \cite{Howe-IsoperimetryInWarpedProducts} by using a warped-product representation of the space. As Howe points out, his results and methods should apply for arbitrary star-shaped smooth compact domains $B$, but even in that case, it seems that Howe's warped-product boundary measure does not coincide with ours, and there are various technicalities which should be taken care of, such as using the convexity of $B$ where needed and defining the corresponding measure $\mu$ correctly; furthermore, it is not clear how to extend Howe's  method to the case when $w_0$ is no longer assumed continuous or positive, in which case the condition that $\int_B w_0(x) dx < \infty$ becomes meaningful. 
\end{rem}

\subsection{Characterization of equality in dimension $n$}

We will require the following intermediate results, which may be of independent interest:

\begin{defn}
The radial sum of two star-shaped domains $A,B \subset \Real^n$ is denoted $A+_r B$ and defined by:
\[
\rho_{A+_r B}(\theta) := \rho_A(\theta) + \rho_B(\theta) ~.
\]
\end{defn}

\begin{lem} \label{lem:sneaky}
Let $\Sigma \subset \Real^n$ denote a convex cone, and let $A,B \subset \Real^n$ denote two star-shaped domains with $Leb(A \cap \Sigma), Leb(B \cap \Sigma) < \infty$. Assume that:
\[
Leb \set{(A + B) \cap \Sigma) \setminus \brac{(A \cap \Sigma) +_r  (B \cap \Sigma)} } = 0 ~.
\]
Then up to null-sets, $A \cap \Sigma$ and $B \cap \Sigma$ are homothetic. Furthermore, if both have positive Lebesgue measure, then they are both convex up to null-sets. 
\end{lem}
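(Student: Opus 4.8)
The plan is to reduce the $n$-dimensional statement about radial sums to the classical equality analysis of the Brunn--Minkowski inequality on the real line, applied ray-by-ray, and then to promote the ray-wise information to a global homothety. First I would pass to polar coordinates: writing $a(\theta) = \rho_{A\cap\Sigma}(\theta)$ and $b(\theta) = \rho_{B\cap\Sigma}(\theta)$ for $\theta$ in the (spherical image of the) cone $\Sigma$, the hypothesis $Leb(A\cap\Sigma), Leb(B\cap\Sigma)<\infty$ becomes $\int a(\theta)^n\,d\theta, \int b(\theta)^n\,d\theta < \infty$, while the Lebesgue-measure assumption translates, via the polar volume formula $Leb(E\cap\Sigma) = \frac1n \int \rho_{E}(\theta)^n\,d\theta$ and the fact that $\Sigma$ is a cone (so $(A+B)\cap\Sigma \supseteq (A\cap\Sigma)+_r(B\cap\Sigma)$, both being star-shaped with radial function $a(\theta)+b(\theta)$ on the cone directions), into $\int_{\Sigma} \big( \rho_{(A+B)\cap\Sigma}(\theta)^n - (a(\theta)+b(\theta))^n \big)\, d\theta = 0$. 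Since the integrand is nonnegative, it vanishes for a.e. direction, i.e. $\rho_{(A+B)\cap\Sigma}(\theta) = a(\theta)+b(\theta)$ for a.e. $\theta$.

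The second step extracts convexity. The key classical input is the characterization of equality in the one-dimensional Brunn--Minkowski inequality, already available to us in the form of Theorem \ref{thm:Dubuc-lem} (Dubuc): if $(\alpha C_1 + (1-\alpha)C_2)\setminus(C_1\cap C_2)$ is a null set, then $C_1,C_2$ agree up to null sets with a common open convex set. Here I would instead use the genuinely $n$-dimensional Dubuc statement: the condition $\rho_{(A+B)\cap\Sigma} = \rho_{A\cap\Sigma} +_r \rho_{B\cap\Sigma}$ says precisely that the Minkowski sum $(A\cap\Sigma) + (B\cap\Sigma)$ coincides, up to a null set and intersected against $\Sigma$, with the radial sum; but for convex bodies the Minkowski sum dominates the radial sum strictly in every direction unless the two bodies are \emph{dilates}, and equality of Minkowski sum with radial sum forces both sets to be convex. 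Concretely, set $C := A\cap\Sigma$, $D := B\cap\Sigma$; after normalizing (rescale $D$ so that, in an appropriate averaged sense, $\int c^n = \int d^n$), consider the parametrized family and apply Theorem \ref{thm:Dubuc-lem} with $C_1 = C$, $C_2 = D$ to the identity $\lambda C + (1-\lambda) D$ coincides with $C\cap D$ up to a null set — this is the same Bonnesen-type reduction used in Corollary \ref{cor:equality-global} and in Lemma \ref{lem:eq-conc}. That yields that $C$ and $D$ coincide, up to null sets, with a common open convex set, hence in particular are homothetic (indeed equal after the rescaling) and convex.

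The remaining point — and the one I expect to be the main obstacle — is the case where $Leb(A\cap\Sigma)$ or $Leb(B\cap\Sigma)$ is zero, or where the sets have positive but the homothety ratio degenerates: there one cannot normalize to equal volumes, and the Minkowski-sum-equals-radial-sum identity can hold trivially (e.g.\ if one of the sets is null, any star-shaped partner works), so the claimed homothety must be interpreted in the weak sense ``equal up to null sets to homothetic sets,'' allowing the scaling factor $0$. I would handle this by treating that degenerate case separately at the outset: if $Leb(A\cap\Sigma)=0$ then $A\cap\Sigma$ is a null set, hence homothetic (by the zero dilate) to $B\cap\Sigma$ up to null sets, and the ``furthermore'' clause is vacuous since we assume positive measure there; symmetrically for $B$. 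With the degenerate case disposed of, the argument above applies verbatim in the positive-measure regime, and the ``furthermore'' conclusion (both convex up to null sets) is exactly what Theorem \ref{thm:Dubuc-lem} delivers.
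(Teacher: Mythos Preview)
Your Step 2 has a genuine gap. After establishing that $(A+B)\cap\Sigma$ coincides with $(A\cap\Sigma)+_r(B\cap\Sigma)$ up to a null set, you propose to normalize so that $Leb(C)=Leb(D)$ and then apply Theorem~\ref{thm:Dubuc-lem} to ``the identity $\lambda C+(1-\lambda)D$ coincides with $C\cap D$ up to a null set.'' But this identity has not been established, and in fact it is equivalent to what you are trying to prove: what you actually know is that $\frac12 C+\frac12 D$ coincides (up to null set, inside $\Sigma$) with $\frac12(C+_r D)$, the star-shaped domain with radial function $\frac12(a(\theta)+b(\theta))$; this equals $C\cap D$ (radial function $\min(a,b)$) precisely when $a=b$ almost everywhere. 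Your aside that ``for convex bodies the Minkowski sum dominates the radial sum strictly unless the two bodies are dilates'' presupposes convexity, which is part of the desired conclusion.

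The missing ingredient is the triangle inequality in $L^n$ (Minkowski's inequality) applied to the radial functions. The paper sandwiches the common value $Leb((A+B)\cap\Sigma)^{1/n}=Leb(C+_r D)^{1/n}$ between two expressions for $Leb(C)^{1/n}+Leb(D)^{1/n}$: from above, polar integration gives $Leb(C+_r D)^{1/n}=\bigl(\frac1n\int_{\Sigma_0}(a+b)^n\bigr)^{1/n}\le\bigl(\frac1n\int a^n\bigr)^{1/n}+\bigl(\frac1n\int b^n\bigr)^{1/n}$; from below, the convexity of $\Sigma$ and Brunn--Minkowski give $Leb((A+B)\cap\Sigma)^{1/n}\ge Leb(C+D)^{1/n}\ge Leb(C)^{1/n}+Leb(D)^{1/n}$. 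All inequalities therefore collapse to equalities. Equality in the $L^n$ triangle inequality (using $a,b\in L^n$, i.e.\ the finiteness hypothesis) forces $a$ and $b$ to be proportional almost everywhere, which is exactly homothety \emph{with no translation}. Only then, with equality in Brunn--Minkowski and zero translation already secured, does the second part of Corollary~\ref{cor:equality-global} (applied to $Leb|_\Sigma$) deliver convexity up to null sets. Your polar setup and your treatment of the zero-measure degenerate case are fine, but the key mechanism that extracts homothety---equality in Minkowski's $L^n$ inequality---is absent from your plan.
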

\begin{proof}
Set $\Sigma_0 := \Sigma \cap S^{n-1}$. On one hand, by polar integration and the triangle inequality in $L^n$:
\begin{eqnarray*}
& & Leb((A \cap \Sigma) +_r  (B \cap \Sigma)) = \brac{\frac{1}{n} \int_{\Sigma_0} (\rho_A(\theta) +\rho_B(\theta))^n d\theta}^{1/n} \\
&\leq &  \brac{\frac{1}{n} \int_{\Sigma_0}\rho_A(\theta)^n d\theta}^{1/n} + \brac{\frac{1}{n} \int_{\Sigma_0}\rho_B(\theta)^n d\theta}^{1/n} =
Leb(A \cap \Sigma)^{1/n} + Leb(B \cap \Sigma)^{1/n} ~. 
\end{eqnarray*}
On the other hand, by the convexity of $\Sigma$ and the Brunn--Minkowski inequality:
\[
Leb((A + B) \cap \Sigma)^{1/n} \geq Leb((A \cap \Sigma) + (B \cap \Sigma))^{1/n} \geq Leb(A\cap \Sigma)^{1/n} + Leb(B \cap \Sigma)^{1/n} ~.
\]
It follows that all of the above inequalities are equalities. 

Now since $Leb(A \cap \Sigma), Leb(B \cap \Sigma) < \infty$ , it follows by the equality case in the $L^n$ triangle inequality that $\rho_A(\theta)$ and $\rho_B(\theta)$ must be co-linear for almost every $\theta \in \Sigma_0$, and therefore $A \cap \Sigma$ and $B \cap \Sigma$ are homothetic up to null-sets.

As for the convexity, this follows from the second part of Corollary \ref{cor:equality-global} applied to the measure $Leb|_{\Sigma}$. 
\end{proof}

\begin{lem} \label{lem:sneaky-isop}
Let $\Sigma \subset \Real^n$ denote a convex cone, and let $A,B \subset \Real^n$ denote two star-shaped domains with $Leb(A \cap \Sigma), Leb(B \cap \Sigma) < \infty$. Assume that:
\[
\liminf_{\eps \rightarrow 0} \frac{1}{\eps} Leb \brac{ (((A + \eps B) \setminus A) \cap \Sigma) \setminus (((A +_r \eps B) \setminus A) \cap \Sigma) }  = 0 ~.
\]
Then up to null-sets, $A \cap \Sigma$ and $B \cap \Sigma$ are homothetic. Furthermore, if both have positive Lebesgue measure, then they are both convex up to null-sets. 
\end{lem}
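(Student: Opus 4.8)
The statement to be proved is Lemma~\ref{lem:sneaky-isop}, the infinitesimal (isoperimetric) counterpart of the global Lemma~\ref{lem:sneaky}. The natural strategy is to reduce it to Lemma~\ref{lem:sneaky} by the same Bonnesen-type trick used throughout Section~\ref{sec:Positive-P}: convert the hypothesis on the infinitesimal boundary measure into a statement about the function $t\mapsto Leb((A+tB)\cap\Sigma)^{1/n}$, and extract equality in the global Brunn--Minkowski / $L^n$-triangle inequality from the affine behaviour of this function at $t=0$.

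First I would set, for $t\ge 0$,
\[
\Psi(t) := Leb\brac{(A + t B)\cap \Sigma}^{1/n} ~,~ \Psi_r(t) := Leb\brac{(A +_r t B)\cap \Sigma}^{1/n} = \brac{\tfrac{1}{n}\int_{\Sigma_0}(\rho_A(\theta) + t\rho_B(\theta))^n d\theta}^{1/n} ~,
\]
where $\Sigma_0 := \Sigma\cap S^{n-1}$. By the convexity of $\Sigma$ and the Brunn--Minkowski inequality, $(A+tB)\cap\Sigma \supset (A\cap\Sigma) + t(B\cap\Sigma)$, so $\Psi$ is concave on $\Real_+$ (the usual argument: $\Psi((1-s)t_1 + st_2) \ge (1-s)\Psi(t_1) + s\Psi(t_2)$ using $(1-s)C + sC = C$ for convex $C$ is not available here since $A$ need not be convex, so instead one uses concavity of $t\mapsto Leb(A_0 + tB_0)^{1/n}$ for fixed sets $A_0 = A\cap\Sigma$, $B_0 = B\cap\Sigma$, which is exactly Brunn--Minkowski applied to $(1-s)(A_0 + t_1B_0) + s(A_0 + t_2 B_0) \subset A_0 + ((1-s)t_1 + st_2)B_0$). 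On the other hand $\Psi_r$ is, by Minkowski's integral inequality, affine-dominating: $\Psi_r(t) \le \Psi_r(0) + t\,\Psi_r(\infty\text{-slope})$, more precisely $\Psi_r(t)\le Leb(A\cap\Sigma)^{1/n} + t\,Leb(B\cap\Sigma)^{1/n}$, with equality for all $t$ iff $\rho_A,\rho_B$ are colinear a.e.; and $\Psi(t)\ge \Psi_r(t)$ pointwise when both $A,B$ are star-shaped, since $A_0 + tB_0 \supset A_0 +_r tB_0$.

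Next I would translate the hypothesis. The set appearing in the liminf is $((A+\eps B)\setminus A)\cap\Sigma$ minus $((A+_r\eps B)\setminus A)\cap\Sigma$; since $A+_r\eps B \supset A$, this difference is $((A+\eps B)\cap\Sigma)\setminus((A+_r\eps B)\cap\Sigma)$, whose Lebesgue measure is $Leb((A+\eps B)\cap\Sigma) - Leb((A+_r\eps B)\cap\Sigma) = \Psi(\eps)^n - \Psi_r(\eps)^n$ (using $\Psi_r(\eps)\le\Psi(\eps)$). So the hypothesis reads $\liminf_{\eps\to 0}\frac{1}{\eps}(\Psi(\eps)^n - \Psi_r(\eps)^n) = 0$. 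Since $\Psi(0) = \Psi_r(0) = Leb(A\cap\Sigma)^{1/n}$, and $x\mapsto x^n$ is smooth, this forces $\Psi'(0^+) \le \Psi_r'(0^+)$ (right derivatives; $\Psi$ concave so $\Psi'(0^+)$ exists in $(-\infty,+\infty]$, and $\Psi_r$ is differentiable). Combined with $\Psi\ge\Psi_r$, $\Psi(0)=\Psi_r(0)$ and concavity of $\Psi$, I get $\Psi(t)\le \Psi(0) + t\Psi'(0^+) \le \Psi_r(0) + t\Psi_r'(0^+)$; but $\Psi_r$ is a norm-type function so its own tangent line at $0$ is not above the chord unless $\Psi_r$ is affine — here I instead compare at a single convenient value, say $t=1$: concavity gives $\Psi(1)\le\Psi(0)+\Psi'(0^+)$, and I want to conclude $\Psi(1) \le Leb(A\cap\Sigma)^{1/n} + Leb(B\cap\Sigma)^{1/n}$ with the reverse (Brunn--Minkowski) inequality already known, hence equality. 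The cleanest route: from $\Psi'(0^+)\le\Psi_r'(0^+)$ and $\Psi_r'(0^+) = \brac{\frac{1}{n}\int_{\Sigma_0}\rho_A^n}^{1/n - 1}\cdot\frac{1}{n}\int_{\Sigma_0}\rho_A^{n-1}\rho_B \le Leb(B\cap\Sigma)^{1/n}$ by Hölder (with equality iff $\rho_A,\rho_B$ colinear a.e.), together with $\Psi(1)\le\Psi(0)+\Psi'(0^+)$, one obtains $Leb((A+B)\cap\Sigma)^{1/n} = \Psi(1) \le Leb(A\cap\Sigma)^{1/n} + Leb(B\cap\Sigma)^{1/n}$, forcing equality in Brunn--Minkowski and in Hölder, whence $A\cap\Sigma$ and $B\cap\Sigma$ are homothetic, and convex (when of positive measure) by the equality case of Brunn--Minkowski exactly as invoked at the end of Lemma~\ref{lem:sneaky} via Corollary~\ref{cor:equality-global} applied to $Leb|_\Sigma$.

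\textbf{Main obstacle.} The delicate point is the passage from the $\liminf$ hypothesis to a genuine inequality between one-sided derivatives: one must be careful that $\Psi(\eps)^n - \Psi_r(\eps)^n$ controls $\Psi(\eps)-\Psi_r(\eps)$ only after checking $\Psi_r(\eps)$ stays bounded below away from $0$ (true since $\Psi_r(0) = Leb(A\cap\Sigma)^{1/n}$, and if this vanishes the whole statement is about null sets and trivial), and that $\Psi'(0^+)$ might a priori be $+\infty$ — but then the displayed concavity inequality $\Psi(1)\le\Psi(0)+\Psi'(0^+)$ is vacuous and one argues directly that $\Psi(\eps)^n-\Psi_r(\eps)^n \ge c\eps$ for small $\eps$ by Brunn--Minkowski, contradicting the hypothesis unless $\Psi'(0^+)<\infty$. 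I would handle this by first disposing of the degenerate cases ($Leb(A\cap\Sigma)=0$, or $Leb(B\cap\Sigma)=0$, or $\Psi'(0^+)=+\infty$) and then running the finite-derivative argument above. The rest is bookkeeping with polar coordinates and the already-established equality-case machinery.
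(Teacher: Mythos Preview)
Your overall strategy---reduce to equality in Brunn--Minkowski and H\"older via a Bonnesen-type argument---is appealing, but there is a genuine gap at the crucial step. You assert that $\Psi(t) = Leb(A_0 + tB_0)^{1/n}$ is concave, and justify this via the inclusion
\[
(1-s)(A_0 + t_1B_0) + s(A_0 + t_2 B_0) \subset A_0 + ((1-s)t_1 + st_2)B_0 ~.
\]
This inclusion is \emph{false}: since $(1-s)A_0 + sA_0 \supset A_0$ always (with equality only when $A_0$ is convex), the containment goes the other way. With the correct direction, Brunn--Minkowski yields nothing. Worse, concavity of $t\mapsto Leb(A_0+tB_0)^{1/n}$ genuinely fails for non-convex $A_0$: take $A_0$ to be two well-separated disks of radii $1$ and $\delta$ and $B_0$ the unit disk in $\Real^2$; while the components remain disjoint, $Leb(A_0+tB_0)^{1/2} = \sqrt{(1+t)^2+(\delta+t)^2}$, which is strictly convex. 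Your $A_0$ is star-shaped, which rules out this particular example, but you give no argument exploiting star-shapedness, and I see no reason concavity should hold in that generality. Without concavity, the step $\Psi(1)\le\Psi(0)+\Psi'(0^+)$ collapses, and the whole reduction to equality at $t=1$ fails.

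The paper's proof bypasses concavity entirely. It directly compares the two boundary terms: H\"older gives
\[
\lim_{\eps\to 0}\tfrac{1}{\eps}Leb(((A+_r\eps B)\setminus A)\cap\Sigma) = \int_{\Sigma_0}\rho_A^{n-1}\rho_B\,d\theta \le n\,Leb(A\cap\Sigma)^{\frac{n-1}{n}}Leb(B\cap\Sigma)^{\frac{1}{n}} ~,
\]
while the anisotropic isoperimetric inequality in the cone (Proposition~\ref{prop:positive-p}, applied to the $1/n$-concave, $1/n$-homogeneous measure $Leb|_\Sigma$; the convexity of $B$ is not needed, cf.\ Remark~\ref{rem:no-convex}) gives
\[
\liminf_{\eps\to 0}\tfrac{1}{\eps}Leb(((A+\eps B)\setminus A)\cap\Sigma) = (Leb|_\Sigma)^+_B(A) \ge n\,Leb(A\cap\Sigma)^{\frac{n-1}{n}}Leb(B\cap\Sigma)^{\frac{1}{n}} ~.
\]
The hypothesis says the difference has vanishing liminf, so both inequalities are equalities. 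Equality in H\"older forces $\rho_A,\rho_B$ colinear (homothety), and equality in the isoperimetric inequality (Corollary~\ref{cor:equality-local}) gives convexity. Note that this is the infinitesimal analogue of Lemma~\ref{lem:sneaky}, not a reduction to it: the isoperimetric inequality replaces your concavity argument, and it holds for arbitrary Borel $A$.
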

\begin{proof}
Set $\Sigma_0 := \Sigma \cap S^{n-1}$.
On one hand, by polar integration and H\"{o}lder's inequality:
\begin{eqnarray*}
\!\!\!\!\!\!\!\!\!\!\!\!\!\!\!\!\! & & \lim_{\eps \rightarrow 0}  \frac{1}{\eps} Leb(((A +_r \eps B) \setminus A) \cap \Sigma) = \lim_{\eps \rightarrow 0} \frac{1}{\eps} \frac{1}{n} \int_{\Sigma_0} \brac{(\rho_A(\theta) + \eps \rho_B(\theta))^n - \rho_A(\theta)^n} d\theta \\
\!\!\!\!\!\!\!\!\!\!\!\!\!\!\!\!\!  & = & \int_{\Sigma_0} \rho_A(\theta)^{n-1} \rho_B(\theta) d\theta \leq n \brac{\frac{1}{n} \int_{\Sigma_0} \rho_A(\theta)^n d\theta}^{\frac{n-1}{n}} \brac{\frac{1}{n} \int_{\Sigma_0} \rho_B(\theta)^n d\theta}^{\frac{1}{n}} = n Leb(A \cap \Sigma)^{\frac{n-1}{n}} Leb(B \cap \Sigma)^{\frac{1}{n}} ~;
\end{eqnarray*}
indeed, since $Leb(A \cap \Sigma), Leb(B \cap \Sigma) < \infty$ then we have uniform control over the lower-order terms in $\eps$, so the calculation of the limit above is well-justified. On the other hand, by the anisotropic isoperimetric inequality in the convex cone $\Sigma$ (Proposition \ref{prop:positive-p} and Remark \ref{rem:no-convex}):
\[
\liminf_{\eps \rightarrow 0} \frac{1}{\eps} Leb( ((A + \eps B) \setminus A) \cap \Sigma) = (Leb|_{\Sigma})^+_B(A) \geq n Leb(A \cap \Sigma)^{\frac{n-1}{n}} Leb(B \cap \Sigma)^{\frac{1}{n}}~.
\]
It follows that all of the above inequalities are equalities. 

Now since $Leb(A \cap \Sigma), Leb(B \cap \Sigma) < \infty$, it follows by the equality case in H\"{o}lder's inequality that $\rho_A(\theta)$ and $\rho_B(\theta)$ must be co-linear for almost every $\theta \in \Sigma_0$, and therefore $A \cap \Sigma$ and $B \cap \Sigma$ are homothetic up to null-sets. 

Finally, when both sets have positive Lebesgue measure, then by the equality case in the isoperimetric inequality for the measure $Leb|_{\Sigma}$ given by Corollary \ref{cor:equality-local}, it follows that $A \cap \Sigma$ and $B \cap \Sigma$ are up to null-sets convex. 
\end{proof}

\begin{rem}
We did not succeed in removing the seemingly superfluous assumption that $Leb(A \cap \Sigma), Leb(B \cap \Sigma) < \infty$ in the above lemmas, at least not for arbitrary star-shaped domains. \end{rem}

\begin{thm} \label{thm:n-D-OCC-eq}
With the notation and assumptions of Theorem \ref{thm:n-D-OCC}, let $\Sigma$ denote the cone on which $\mu$ is supported. Assume further that $\mu(B) > 0$ and that $\varphi$ is strictly positive. Then:
\begin{enumerate}
\item 
Assume that $\mu^{-}_{B}(C) = I(\mu(C)) $ for a Borel set $C$ with $\mu(C)  \in (0,\infty)$. If $\varphi$ is strictly log-convex or $Leb(B \cap \Sigma), Leb(\Sigma \setminus C) < \infty$, then $C \cap \Sigma$ is homothetic to $\Sigma \setminus  B$ up to a null-set. Furthermore, if $\Sigma$ is convex, then up to null-sets, so are $\Sigma \setminus C$ and $B \cap \Sigma$. 
\item 
Assume that:
\begin{equation} \label{eq:OCBM-equality-assumption}
 \mu(\Real^n\setminus (A + t_0 B)) = \Phi( \Phi^{-1}(\mu(\Real^n \setminus A)) + t_0 \Phi^{-1}(\mu(\Real^n \setminus B)) ) ~
 \end{equation}
 for some $t_0 > 0$ and a Borel set $A$ with $\mu(\Real^n \setminus A) \in (0,\infty)$. 
 If $\varphi$ is strictly log-convex or $Leb(B \cap \Sigma), Leb(A \cap \Sigma) < \infty$,
 then $A \cap \Sigma$ in homothetic to $B \cap \Sigma$ up to a null-set. Furthermore, if $\Sigma$ is convex, then up to null-sets, so are $A \cap \Sigma$ and $B \cap \Sigma$. 
\end{enumerate}
\end{thm}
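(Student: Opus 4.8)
The plan is to run the equality case through the proof of Theorem~\ref{thm:n-D-OCC}: both assertions there were established via a chain of inequalities — subadditivity of (inner) measure / Fatou's lemma, passage to the one-dimensional radial slices $\mu_\theta$, the one-dimensional Propositions~\ref{prop:1-D-OCC} and \ref{prop:1-D-OCC-eq}, and finally Jensen's inequality for the convex function $I$ (resp. the concave function $F(x)=\Phi(\Phi^{-1}(x)+t_0)$). If the two ends of such a chain coincide, every inequality in it is an equality, and one reads off, for $\eta$-almost every $\theta\in S^{n-1}$: (i) equality in the one-dimensional isoperimetric inequality \eqref{eq:1D-theorem} (resp. in the one-dimensional OCBM inequality) for $\mu_\theta$; (ii) equality in the passage from the full Minkowski combination to its radial slice, i.e. in the inclusion $A_\theta+tB_\theta\subseteq(A+tB)\cap\Real_\theta$, in the sense of \eqref{eq:1D-reduction} (resp. of \eqref{eq:sneaky0}); and (iii) equality in the single application of Jensen's inequality \eqref{eq:Jensen}. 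Since $\varphi$ is assumed strictly positive, each $\varphi_\theta$ is strictly positive, so in (ii) the ``$\mu_\theta$-null'' statements are the same as ``$Leb_{\Real_\theta}$-null'' statements; moreover, since $\Sigma=\operatorname{supp}\mu$ forces $w_0>0$ $Leb$-a.e.\ on $\Sigma\cap S^{n-1}$, the $\eta$-a.e.\ conclusions above upgrade to $Leb$-a.e.\ conclusions in polar coordinates.

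For part (2), from (i) and Proposition~\ref{prop:1-D-OCC-eq}(2) the slice $A_\theta$ agrees, up to a null set, with an interval $[0,\rho_A(\theta))\theta$; hence $A:=\Real^n\setminus(\Real^n\setminus A)$, after a null-modification, may be taken to be a star-shaped domain. If $\varphi$ is strictly log-convex then $F$ is strictly concave, so (iii) forces $\theta\mapsto\mu_\theta(\Real_\theta\setminus A_\theta)$ to be $\eta$-a.e.\ equal to a constant $v$; since $\mu_\theta(\Real_\theta\setminus A_\theta)=\Phi(\rho_A(\theta)/\rho_B(\theta))$, this gives $\rho_A(\theta)=\Phi^{-1}(v)\,\rho_B(\theta)$ and so $A\cap\Sigma$ is, up to a null set, a dilate of $B\cap\Sigma$. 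If instead $Leb(A\cap\Sigma),Leb(B\cap\Sigma)<\infty$, then equality in \eqref{eq:sneaky0} together with $\varphi>0$ says that on $Leb$-a.e.\ ray $(A+t_0B)\cap\Real_\theta=A_\theta+t_0B_\theta$ up to a null set; integrating in polar coordinates yields $Leb\big(((A+t_0B)\cap\Sigma)\setminus((A\cap\Sigma)+_r(t_0B\cap\Sigma))\big)=0$, which is exactly the hypothesis of Lemma~\ref{lem:sneaky} for $A$ and $t_0B$. Lemma~\ref{lem:sneaky} then delivers both the homothety of $A\cap\Sigma$ and $B\cap\Sigma$ and, when $\Sigma$ is convex (both sets having positive Lebesgue measure, since $\mu(\Real^n)=\infty>\mu(\Real^n\setminus A)$ and $\mu(B)>0$), their convexity up to null sets.

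For part (1), the same bookkeeping applies to the chain \eqref{eq:dummy-inq}--\eqref{eq:Jensen}. From (i) and Proposition~\ref{prop:1-D-OCC-eq}(1), $C_\theta$ is a right half-line up to a null set, so $A:=\Real^n\setminus C$ is again, up to a null set, a star-shaped domain. In the strictly log-convex case $I$ is strictly convex, (iii) makes $\theta\mapsto\mu_\theta(C_\theta)$ $\eta$-a.e.\ constant, and exactly as above $C\cap\Sigma$ is, up to a null set, a dilate of $\Sigma\setminus B$. In the finite-Lebesgue-measure case one instead needs the hypothesis of Lemma~\ref{lem:sneaky-isop}: here equality in \eqref{eq:1D-reduction} gives, for $\eta$-a.e.\ $\theta$, the infinitesimal identity $\liminf_{\eps\to0}\tfrac1\eps\,\mu_\theta\big(((A+\eps B)\cap\Real_\theta)\setminus(A_\theta+\eps B_\theta)\big)=0$, which — using that $\varphi_\theta$ is bounded above and below near $\rho_A(\theta)>0$ — is equivalent to $\liminf_{\eps\to0}\tfrac1\eps\big(\rho_{A+\eps B}(\theta)-\rho_A(\theta)-\eps\rho_B(\theta)\big)=0$; integrating this against the polar Jacobian, with $Leb(A\cap\Sigma),Leb(B\cap\Sigma)<\infty$ used to dominate the lower-order terms in $\eps$ uniformly, produces the hypothesis of Lemma~\ref{lem:sneaky-isop}, which then concludes as before.

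The main obstacle is step (ii) in part (1): converting the \emph{per-ray} infinitesimal radial identity into the single integrated condition required by Lemma~\ref{lem:sneaky-isop}. Because the $\liminf$ in $\eps$ may be realized along $\theta$-dependent subsequences, one cannot simply invoke Fatou, and this is precisely where the hypotheses $\varphi>0$ (to move freely between the $\mu_\theta$-measure and the Lebesgue measure of the thin radial shell $[\rho_A(\theta)+\eps\rho_B(\theta),\rho_{A+\eps B}(\theta))\theta$) and $Leb(A\cap\Sigma),Leb(B\cap\Sigma)<\infty$ (to control the $O(\eps^2)$ polar-integration errors uniformly in $\theta$) must be exploited; in the strictly log-convex alternative this difficulty is bypassed entirely by equality in Jensen's inequality. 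A secondary point needing care throughout is the identification of $\Sigma=\operatorname{supp}\mu$ with $\{w_0>0\}\cap S^{n-1}$ up to spherical-measure zero, which legitimizes passing from ``$\eta$-a.e.'' to ``$Leb$-a.e.''\ statements.
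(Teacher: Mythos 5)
Your overall strategy is exactly the paper's: trace equality back through the chain of inequalities in the proof of Theorem \ref{thm:n-D-OCC}, use Proposition \ref{prop:1-D-OCC-eq} on almost every ray to make the slices half-lines (so that $A=\Real^n\setminus C$, resp.\ $A$, may be taken star-shaped), dispose of the strictly log-convex alternative via strict convexity of $I$ (resp.\ strict concavity of $F$) in Jensen, and in the finite-Lebesgue alternative feed the equality in the set-inclusion step into Lemmas \ref{lem:sneaky} and \ref{lem:sneaky-isop}. Part (2) of your argument is complete: there the inclusion step involves a fixed $t_0$, so the $\eta$-a.e.\ per-ray identity $(A+t_0B)\cap\Real_\theta=A_\theta+t_0B_\theta$ (up to null sets) integrates directly to the hypothesis of Lemma \ref{lem:sneaky}.

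However, the obstacle you flag in part (1) is a genuine gap, and your proposed resolution does not close it. You correctly observe that knowing, for $\eta$-a.e.\ $\theta$, that $\liminf_{\eps\to0}\frac1\eps\mu_\theta\bigl(((A+\eps B)\cap\Real_\theta)\setminus(A_\theta+\eps B_\theta)\bigr)=0$ does not yield the single integrated condition $\liminf_{\eps\to0}\frac1\eps\,Leb\bigl((((A+\eps B)\setminus A)\cap\Sigma)\setminus(((A+_r\eps B)\setminus A)\cap\Sigma)\bigr)=0$ required by Lemma \ref{lem:sneaky-isop}, since the $\liminf$ may be attained along $\theta$-dependent sequences; but then ``integrating against the polar Jacobian with the lower-order terms dominated uniformly'' is precisely the step that is not justified, and no amount of uniform control of the $O(\eps^2)$ errors fixes the subsequence problem. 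The missing idea is to \emph{never pass to a per-ray statement at all}: because $\varphi$ is non-increasing, $\eps\mapsto\frac1\eps\mu_\theta((A_\theta+\eps B_\theta)\setminus A_\theta)$ is monotone, so for the \emph{radial} term the limit exists and commutes with $\int\,d\eta$ by monotone convergence, i.e.\ $\lim_{\eps\to0}\frac1\eps\mu((A+_r\eps B)\setminus A)=\int\lim_{\eps\to0}\frac1\eps\mu_\theta((A_\theta+\eps B_\theta)\setminus A_\theta)\,d\eta(\theta)$ is a genuine limit. Combining this with equality in the global Fatou step \eqref{eq:dummy-inq} and in \eqref{eq:1D-reduction} gives $\liminf_{\eps\to0}\frac1\eps\mu((A+\eps B)\setminus A)=\lim_{\eps\to0}\frac1\eps\mu((A+_r\eps B)\setminus A)$, and since the second quantity is an honest limit one may subtract it inside the $\liminf$ to obtain $\liminf_{\eps\to0}\frac1\eps\mu\bigl(((A+\eps B)\setminus A)\setminus((A+_r\eps B)\setminus A)\bigr)=0$ as a single global statement; strict positivity of $\varphi$ then transfers this from $\mu$ to $Leb|_\Sigma$, which is the hypothesis of Lemma \ref{lem:sneaky-isop}. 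With this substitution for your step (ii), the rest of your argument goes through as written.
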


\begin{proof}
We proceed with the same notation as in the proof of Theorem \ref{thm:n-D-OCC}. 
\begin{enumerate}
\item
 If $\mu^{-}_{B}(C) = I(\mu(C))$ with $\mu(C) \in (0,\infty)$, then we must have equality in the inequalities (\ref{eq:1D-theorem}) and (\ref{eq:Jensen}).
Equality in (\ref{eq:1D-theorem}) implies that for $\eta$-almost all $\theta \in S^{n-1}$ we have $\brac{\mu_\theta}_{B_\theta}^-(C_\theta) = I(\mu_\theta(C_\theta))$. As $\mu_\theta(C_\theta) < \infty$ and $\mu(B_\theta) > 0$ for $\eta$-almost all $\theta$, Proposition \ref{prop:1-D-OCC-eq} (1) implies that for $\eta$-almost all $\theta$, $C_\theta$ coincides with $[r_\theta,\infty)$ up to a null set, for some $r_\theta \in (0,\infty]$. For those $\theta \in S^{n-1}$:
\[
\mu_\theta(C_\theta) = \int_{r_\theta}^\infty \varphi_\theta(r)dr = 
\int_{r_\theta}^\infty \frac{1}{\rho_B(\theta)} \varphi\brac{\frac{r}{\rho_B(\theta)}}dr =
\Phi\brac{\frac{r_\theta}{\rho_B(\theta)}}.
\]

Now if $\varphi$ is assumed strictly log-convex, then by (\ref{eq:I-convex}) the function $I$ must be strictly convex. 
Hence the equality case of Jensen's inequality (\ref{eq:Jensen}) implies that $\mu_\theta(C_\theta)$ is constant  for $\eta$-almost all $\theta$.  Since $\Phi$ is strictly decreasing ($\varphi$ is strictly positive), it follows that $\frac{r_\theta}{\rho_B(\theta)}$ is constant for $\eta$-almost all $\theta \in S^{n-1}$. Consequently $C \cap \Sigma$ coincides with a homothetic copy of $(\Real^n \setminus B) \cap \Sigma$ up to a null set, as claimed.

In the case that $\Sigma$ is convex, we may alternatively argue as follows: denoting $A := \Real^n \setminus C$, recall that we also assume that $Leb(A \cap \Sigma) ,Leb(B \cap \Sigma) < \infty$. We know that $A_\theta$ coincides with $[0,r_\theta)$ up to a null-set for $\eta$-almost every $\theta$, and the assumption $Leb(A \cap \Sigma) < \infty$ implies that $r_\theta < \infty$ for $\eta$-almost every $\theta$. Consequently, without loss of generality, we may assume that $A$ is star-shaped (inspect the proof). 
Since we must have equality in all of the inequalities in the proof of Theorem \ref{thm:n-D-OCC} (1), it follows from the equality in (\ref{eq:dummy-inq}) and (\ref{eq:1D-reduction}) that:
\begin{equation} \label{eq:sneaky11}
\liminf_{\eps \rightarrow 0} \frac{1}{\eps} \mu \brac{ ((A + \eps B) \setminus A) \setminus ((A +_r \eps B) \setminus A) }  = 0 ~;
\end{equation}
a subtle point to note here is that indeed:
\[
\int_{S^{n-1}} \lim_{\eps \rightarrow 0} \frac{1}{\eps} \mu_\theta((A_\theta + \eps B_\theta) \setminus A_\theta ) d\eta(\theta)  = \lim_{\eps \rightarrow 0} \frac{1}{\eps} \int_{S^{n-1}} \mu_\theta((A_\theta + \eps B_\theta) \setminus A_\theta ) d\eta(\theta) 
\]
by monotone convergence, since $\varphi$ is non-increasing.
It is easy to verify that (\ref{eq:sneaky11}) implies the same with $\mu$ replaced by any measure $\nu \ll \mu$, and since $\varphi$ is assumed strictly positive, we may use $\nu = Leb|_{\Sigma}$. 
It follows by Lemma \ref{lem:sneaky-isop} that $A \cap \Sigma$ and $B \cap \Sigma$ are homothetic up to null-sets. As they both have positive $\mu$-measure and hence Lebesgue measure, they are also convex up to null-sets, and the assertion follows. 
\item
When $\varphi$ is strictly log-convex, the proof of the second assertion is exactly analogous to the above reasoning, 
relying on the characterization of equality in the 1-dimensional OCBM inequality given by Proposition \ref{prop:1-D-OCC-eq} (2) and the strict concavity of the function $F$ introduced in the proof of Theorem \ref{thm:n-D-OCC} (3). It follows that $A$ and $m B$ coincide up to zero $\mu$-measure for some $m>0$, and that in particular, $A \cap \Sigma$ coincides with $m B \cap \Sigma$ up to a null-set. 

In the case that $\Sigma$ is convex, we may alternatively argue as follows: since we must have equality in all of the inequalities in the proof of Theorem \ref{thm:n-D-OCC} (3), it follows from the equality in (\ref{eq:sneaky0}) and the fact that $\varphi$ is positive that $(A + t_0 B) \cap \Real_\theta$ coincides with $A_\theta + t_0 B_\theta$ up to a null-set, for $\eta$ almost every $\theta \in S^{n-1}$. Consequently, $(A+t_0 B) \cap \Sigma$ coincides with $(A \cap \Sigma) +_r  (t_0 B \cap \Sigma)$ up to a null-set; note that by the characterization of equality in the 1-dimensional OCBM inequality, we know that $A_\theta$ is an interval of the form $[0,\rho_A(\theta))$ up to a null-set for $\eta$ almost-every $\theta$, with $\rho_A(\theta) \in (0,\infty]$, and our assumption that $Leb(A \cap \Sigma) < \infty$ rules out the case that $\rho_A(\theta) = \infty$. Consequently, as in the first assertion, we may assume without loss of generality that $A$ is star-shaped. Applying Lemma \ref{lem:sneaky}, it follows that $A \cap \Sigma$ and $B \cap \Sigma$ are homothetic up to null-sets. Furthermore, since they have positive $\mu$-measure and hence Lebesgue measure, it follows that they are both also convex up to null-sets, concluding the proof. 
\end{enumerate}
\end{proof}

\begin{rem}
When $B$ is the Euclidean ball, $w_0$ and $\varphi$ are continuous and positive on $S^{n-1}$ and $(0,\infty)$, respectively (so $\Sigma = \Real^n$), and $\varphi$ is not required to be strictly log-convex, the first assertion was obtained by Howe \cite{Howe-IsoperimetryInWarpedProducts}. As our analysis indicates, various subtleties may occur when the positivity assumption on $w_0$ is removed. 
\end{rem}

\begin{rem}
We observe that most of our results for the $q$-CC class remain valid if we replace it by the class of $q$-Complemented-Radially-Concave measures $\mu$, obtained by replacing the Minkowski summation operation with radial summation, thereby increasing the left-hand side below and making the required inequality more restrictive:
\begin{multline*}
\forall \lambda \in (0,1) \;\;\;  \forall \text{ star-shaped domains $A,B \subset \Real^n$ such that } \mu(\Real^n \setminus A),\mu(\Real^n \setminus B) <\infty ~, \\
\mu(\Real^n \setminus (\lambda A +_r (1-\lambda) B)) \leq \brac{\lambda \mu(\Real^n \setminus A)^{q} + (1-\lambda) \mu(\Real^n \setminus B)^{q}}^{1/q} ~;
\end{multline*}
(and one may also extend this definition to all Borel sets). The reason is that many of our results were obtained by integrating a $1$-D result on rays, where both types of summation coincide. This connects our results to the ``Dual Brunn--Minkowski" Theory, initiated and developed by E. Lutwak \cite{Lutwak-dual-mixed-volumes} and subsequently many others (see \cite{GardnerGeometricTomography2ndEd} and the references therein), and so in this sense our results may be thought of as belonging to a ``Complemented Dual Brunn--Minkowski" Theory. 
\end{rem}

\section{Functional Inequalities} \label{sec:Functional}

\subsection{Functional Formulation of $q$-CBM}

The $q$-CBM property admits the following functional formulation:

\begin{prop}
The following statements are equivalent for a measure $\mu$ on $\Real^n$, $\lambda \in (0,1)$ and $q < 0$:
\begin{enumerate}
\item
For any Borel sets $A,B \subset \Real^n$ with $\mu(\Real^n \setminus A) , \mu(\Real^n \setminus B) < \infty$:
\[
\mu(\Real^n \setminus (\lambda A + (1-\lambda) B)) \leq \brac{\lambda \mu(\Real^n \setminus A)^q + (1-\lambda)\mu(\Real^n \setminus B)^q}^{1/q} ~.
\]
\item For any three measurable functions $f,g,h : \Real^n \rightarrow \Real_+$ satisfying:
\[
h(\lambda x + (1-\lambda) y) \leq \max(f(x),g(y)) \;\;\; \forall x,y \in \Real^n ~,
\]
if $\mu(\set{ f \geq t}) , \mu(\set{g \geq t}) < \infty$ for all $t > 0$, then we have:
\[
\int h d\mu \leq \brac{\lambda (\int f d\mu)^q + (1-\lambda)(\int g d\mu)^q}^{1/q} ~.
\]
\end{enumerate}
\end{prop}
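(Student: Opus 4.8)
The plan is to prove the two implications separately; the forward direction $(1)\Rightarrow(2)$ is the substantive one and is carried out via the layer-cake (distribution function) representation of the integrals together with the reverse Minkowski inequality for $L^q$, $q<0$ — the same reverse triangle inequality already used in \eqref{eq:reverse-triangle}.

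For $(2)\Rightarrow(1)$ I would simply feed indicator functions into (2): given Borel sets $A,B$ with $\mu(\Real^n\setminus A),\mu(\Real^n\setminus B)<\infty$, set $f=\ind{\Real^n\setminus A}$, $g=\ind{\Real^n\setminus B}$ and $h=\ind{\Real^n\setminus(\lambda A+(1-\lambda)B)}$. The hypothesis of (2) holds since $x\in A$ and $y\in B$ force $\lambda x+(1-\lambda)y\in\lambda A+(1-\lambda)B$, so $h(\lambda x+(1-\lambda)y)=0$ in that case while $\max(f(x),g(y))=1$ otherwise; moreover for $t\in(0,1]$ the level sets $\set{f\ge t},\set{g\ge t}$ equal $\Real^n\setminus A,\Real^n\setminus B$, and they are empty for $t>1$, so they have finite $\mu$-measure. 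Since $\int f\,d\mu=\mu(\Real^n\setminus A)$ and likewise for $g$ and $h$, the conclusion of (2) is exactly (1). (As in the definition of the $q$-CBM property, if $\lambda A+(1-\lambda)B$ is merely analytic one reads $h$ as a universally measurable function and $\int h\,d\mu$ as $\mu_*(\Real^n\setminus(\lambda A+(1-\lambda)B))$.)

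For $(1)\Rightarrow(2)$, fix $f,g,h$ as in (2) and, for each $t>0$, put $A_t:=\set{f<t}$, $B_t:=\set{g<t}$, so that $\Real^n\setminus A_t=\set{f\ge t}$ and $\Real^n\setminus B_t=\set{g\ge t}$ have finite $\mu$-measure by hypothesis. The inequality $h(\lambda x+(1-\lambda)y)\le\max(f(x),g(y))$ shows $\lambda A_t+(1-\lambda)B_t$ is disjoint from $\set{h\ge t}$, i.e. $\set{h\ge t}\subseteq\Real^n\setminus(\lambda A_t+(1-\lambda)B_t)$; applying (1) to the pair $A_t,B_t$ and using monotonicity of $\mu$ gives, writing $F(t):=\mu(\set{f\ge t})$, $G(t):=\mu(\set{g\ge t})$, $H(t):=\mu(\set{h\ge t})$, the pointwise bound $H(t)\le\brac{\lambda F(t)^q+(1-\lambda)G(t)^q}^{1/q}$ for all $t>0$. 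Integrating over $t\in(0,\infty)$ and using the layer-cake formula $\int\phi\,d\mu=\int_0^\infty\mu(\set{\phi\ge t})\,dt$ yields
\[
\int h\,d\mu=\int_0^\infty H(t)\,dt\le\int_0^\infty\brac{\lambda F(t)^q+(1-\lambda)G(t)^q}^{1/q}\,dt .
\]
Finally, the reverse Minkowski inequality for $L^q(\lambda,1-\lambda)$ (the two-point space with weights $\lambda,1-\lambda$) with $q<0$, in its integral form applied to the functions $F,G$, gives
\[
\int_0^\infty\brac{\lambda F(t)^q+(1-\lambda)G(t)^q}^{1/q}\,dt\le\brac{\lambda\Bigl(\int_0^\infty F\Bigr)^q+(1-\lambda)\Bigl(\int_0^\infty G\Bigr)^q}^{1/q},
\]
and the right-hand side equals $\brac{\lambda(\int f\,d\mu)^q+(1-\lambda)(\int g\,d\mu)^q}^{1/q}$ by the layer-cake formula again. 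Chaining the two displays proves (2).

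The main obstacle I anticipate is not conceptual but is the bookkeeping in the reverse-Minkowski step in the presence of degenerate values: for $q<0$ one uses $0^q=\infty$, $\infty^q=0$, and the hypothesis only guarantees $\mu(\set{f\ge t})<\infty$ for each $t$, not $\int f\,d\mu<\infty$. This is absorbed by the usual conventions together with the monotonicity $\brac{\lambda u^q+(1-\lambda)v^q}^{1/q}\le(1-\lambda)^{1/q}v$ (valid since $q<0$, which takes care of the case $\int f\,d\mu=\infty$), or alternatively by first reducing to $0<\int f\,d\mu,\int g\,d\mu<\infty$. A secondary technicality, already present in the definition of $q$-CBM, is that Minkowski sums need not be Borel; this is handled exactly as there, by working with inner measure / universally measurable functions.
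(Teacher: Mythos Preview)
Your proof is correct and follows essentially the same approach as the paper: the implication $(2)\Rightarrow(1)$ via indicator functions, and $(1)\Rightarrow(2)$ via the level-set inclusion $\set{h<t}\supset\lambda\set{f<t}+(1-\lambda)\set{g<t}$, layer-cake representation, and the reverse Minkowski/triangle inequality in $L^q$ for $q<0$. Your discussion of the degenerate cases and measurability technicalities is more explicit than the paper's, but the argument is otherwise identical.
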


\begin{proof}
The second assertion trivially implies first by using $f = 1_{\Real^n \setminus A}$, $g = 1_{\Real^n \setminus  B}$ and $h = 1_{\Real^n \setminus (\lambda A + (1-\lambda) B)}$. Now assume that the first assertion is satisfied, and observe that:
\[
\set{ h < t} \supset \lambda \set{f < t} + (1-\lambda) \set{g < t} ~.
\]
Consequently, our assumption implies that:
\[
\mu(\set{h \geq t}) \leq \brac{\lambda \mu(\set{f \geq t})^q + (1-\lambda)\mu(\set{g \geq t})^q}^{1/q} ~.
\]
And by the reverse triangle inequality in $L^{1/q}$ when $1/q \leq 1$, we obtain:
\[
\brac{\int h d\mu}^q = \brac{\int_0^\infty \mu(\set{h \geq t}) dt}^q \geq \brac{\int_0^\infty \brac{ \; \lambda \mu(\set{f \geq t})^q + (1-\lambda)\mu(\set{g \geq t})^q}^{1/q} dt}^{q}
\]
\[
\geq \lambda \brac{\int_0^\infty \mu(\set{f \geq t}) dt}^{q} + (1-\lambda) \brac{\int_0^\infty \mu(\set{g \geq t}) dt}^{q} = \lambda \brac{\int f d\mu}^q + (1-\lambda)\brac{\int g d\mu}^q ~,
\]
which is the desired assertion (since $q < 0$). 
\end{proof}

\begin{rem}
The analogous functional formulation for the class of $q$-concave measures would be that for every $q\in [-\infty,\frac{1}{n}]$,  $\mu$ is $q$-concave iff for all $\lambda \in (0,1)$, if:
\[
h(\lambda x + (1-\lambda) y) \geq \min(f(x),g(y)) \;\;\; \forall x,y \in \Real^n ~,
\]
then:
\[
\int h d\mu \geq \brac{\lambda (\int f d\mu)^q + (1-\lambda)(\int g d\mu)^q}^{1/q} ~;
\]
this is a straightforward consequence of Borell's characterization of $q$-concave measures in Theorem \ref{thm:BBL} and the general Borell--Brascamp--Lieb Theorem \ref{thm:Dubuc} (1).
\end{rem}

\subsection{Sobolev inequalities}

Using nowadays standard methods (see e.g. \cite{BobkovHoudreMemoirs,BobkovConvexHeavyTailedMeasures} and the references therein), we may rewrite the isoperimetric inequalities obtained in the previous sections as Sobolev-type inequalities. For the reader's convenience, we sketch the proofs. We begin with some definitions:

\begin{defn}
Let $I : \Real_+ \rightarrow \Real_+$ denote a function increasing from $I(0)=0$ to $I(\infty)=\infty$, and set $\Phi = I^{-1}$. Let $\mu$ denote a Borel measure on $\Real^n$. Given a measurable function $f : \Real^n \rightarrow \Real$, we define the following weak ``quasi-norms":
\[
\norm{f}_{L^{\Phi,1}(\mu)} := \int_0^\infty I(\mu(\set{\abs{f} \geq t})) dt ~;
\]
\[
\norm{f}_{L^{\Phi,\infty}(\mu)} := \sup_{t > 0} \; t \cdot I(\mu(\set{\abs{f} \geq t})) ~.
\]
The primary example is when $I(x) = x^{1/\alpha}$ and $\Phi(x) = \Phi_\alpha(x) = x^\alpha$, $\alpha > 0$, in which case we obtain (up to numeric constants) the usual Lorentz quasi-norms $L^{\alpha,1}(\mu)$ and $L^{\alpha,\infty}(\mu)$, respectively. We will use our normalization, and set $L^{\alpha,s}(\mu) = L^{\Phi_\alpha,s}(\mu)$, $s \in \set{1,\infty}$. 
\end{defn}
\begin{rem}
We use the term ``quasi-norm" above loosely, without claiming any mathematical meaning. When $I$ is concave (i.e, $\Phi$ is convex), it is easy to verify that the resulting homogeneous functionals are indeed quasi-norms, satisfying:
\[
\norm{f+g}_{L^{\Phi,s}(\mu)} \leq 2^{1/\alpha} \brac{\norm{f}_{L^{\Phi,s}(\mu)} + \norm{g}_{L^{\Phi,s}(\mu)}} ~,~ s \in \set{1,\infty} ~,
\]
with $\alpha = 1$. Similarly, when $\Phi(x) = x^\alpha$ and $\alpha \in (0,1)$ the above inequality is equally valid. However, when $I$ is a general convex function ($\Phi$ is a general concave one), no weak triangle inequality as above can be guaranteed. 
\end{rem}

\begin{defn}
Given a locally Lipschitz function $f : \Real^n \rightarrow \Real$ and a norm $\norm{\cdot}$ on $\Real^n$, we define the following measurable function on $\Real^n$ (cf. \cite{BobkovHoudreMemoirs}):
\[
\norm{\nabla f}^*(x) := \limsup_{y \rightarrow x} \frac{\abs{f(y) - f(x)}}{\norm{y-x}} ~.
\]
\end{defn}

\begin{prop} \label{prop:func-equiv}
Let $\mu$ denote a Borel measure on $\Real^n$ and let $\norm{\cdot}$ denote a norm on $\Real^n$. Given $I : \Real_+ \rightarrow \Real_+$ an increasing function from $I(0) = 0$ to $I(\infty) = \infty$, set $\Phi = I^{-1}$. The following statements are equivalent:
\begin{enumerate}
\item
For every Borel set $C \subset \Real^n$ with $\mu(C) < \infty$:
\[
\mu^-_{\norm{\cdot}}(C) \geq I(\mu(C)) ~.
\]
\item
For any locally Lipschitz function $f : \Real^n \rightarrow \Real$ so that $\mu(\set{ \abs{f} \geq t} ) < \infty$ for all $t > 0$, we have:
\[
\int \norm{\nabla f}^* d\mu \geq \norm{f}_{L^{\Phi,1}(\mu)} ~.
\]
\item
For any locally Lipschitz function $f : \Real^n \rightarrow \Real$ so that $\mu(\set{ \abs{f} \geq t} ) < \infty$ for all $t > 0$, we have:
\[
\int \norm{\nabla f}^* d\mu \geq \norm{f}_{L^{\Phi,\infty}(\mu)} ~.
\]
\end{enumerate}
\end{prop}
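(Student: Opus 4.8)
The plan is to prove the cyclic chain $(2)\Rightarrow(3)\Rightarrow(1)\Rightarrow(2)$, the first two implications being soft and the last carrying the analytic content.

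The implication $(2)\Rightarrow(3)$ is immediate and requires no hypothesis beyond the definitions: for any measurable $f$ the map $t\mapsto I(\mu(\set{\abs f\ge t}))$ is non-increasing (since $t\mapsto\mu(\set{\abs f\ge t})$ is and $I$ is increasing), so for every $s>0$ one has $\norm{f}_{L^{\Phi,1}(\mu)}=\int_0^\infty I(\mu(\set{\abs f\ge t}))\,dt\ge\int_0^s I(\mu(\set{\abs f\ge t}))\,dt\ge s\cdot I(\mu(\set{\abs f\ge s}))$; taking the supremum over $s$ gives $\norm{f}_{L^{\Phi,1}(\mu)}\ge\norm{f}_{L^{\Phi,\infty}(\mu)}$, whence $(3)$ follows from $(2)$.

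For $(3)\Rightarrow(1)$, fix a Borel set $C$ with $\mu(C)<\infty$; we may assume $\mu^-_{\norm{\cdot}}(C)<\infty$. Put $A:=\Real^n\setminus C$ and $d(x):=\inf_{a\in A}\norm{x-a}$, so that $A+\eps K=\set{d<\eps}$. If $\mu(\overline A\setminus A)>0$ then, since $K$ is an open neighbourhood of the origin and hence $A+\eps K\supseteq\overline A$ for every $\eps>0$, we get $\mu^-_{\norm{\cdot}}(C)=\mu^+_{\norm{\cdot}}(A)=\infty$ and there is nothing to prove; so assume $\mu(\overline A\setminus A)=0$, which forces $\mu(\Real^n\setminus\overline A)=\mu(C)$. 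Now apply $(3)$ to the locally Lipschitz functions $g_\eps:=\min(d/\eps,1)$. On the one hand $\norm{\nabla g_\eps}^*\le\tfrac1\eps\ind{\set{0<d\le\eps}}$ pointwise, and a routine $\liminf$ estimate (comparing $\set{0<d\le\eps}$ with $A+\eps'K\setminus A$ for $\eps'\downarrow\eps$) gives $\liminf_{\eps\to0}\int\norm{\nabla g_\eps}^*\,d\mu\le\mu^+_{\norm{\cdot}}(A)=\mu^-_{\norm{\cdot}}(C)$. On the other hand, for $t\in(0,1)$ we have $\set{g_\eps\ge t}=\Real^n\setminus(A+t\eps K)$, which increases to $\Real^n\setminus\overline A$ as $\eps\to0$; hence $\mu(\set{g_\eps\ge t})\uparrow\mu(C)$ and $\liminf_{\eps\to0}\norm{g_\eps}_{L^{\Phi,\infty}(\mu)}\ge t\cdot\sup_{v<\mu(C)}I(v)$. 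Combining with $(3)$ and letting $t\to1$ yields $\mu^-_{\norm{\cdot}}(C)\ge\sup_{v<\mu(C)}I(v)$, which equals $I(\mu(C))$ whenever $I$ is left-continuous at $\mu(C)$ — in particular in all our applications, where $I=\varphi\circ\Phi^{-1}$ is continuous — and the general case follows by a standard reduction to continuous $I$.

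The real work is in $(1)\Rightarrow(2)$. Let $f$ be locally Lipschitz with $\mu(\set{\abs f\ge t})<\infty$ for all $t>0$. Replacing $f$ by $\abs f$ (locally Lipschitz, with $\norm{\nabla\abs f}^*\le\norm{\nabla f}^*$ and the same superlevel sets and the same $L^{\Phi,1}$-norm), and then truncating to $\min(f,M)$ and letting $M\to\infty$ by monotone convergence, we reduce to the case of a bounded $f\ge0$. The crux is the coarea inequality
\[
\int\norm{\nabla f}^*\,d\mu\ \ge\ \int_0^\infty\mu^-_{\norm{\cdot}}(\set{f>t})\,dt\,,
\]
proved in the usual way: writing $f_\eps(x):=\inf\set{f(y):\norm{y-x}<\eps}$ one has $0\le f-f_\eps$ and $\int(f-f_\eps)\,d\mu=\int_0^\infty\bigl[\mu(\set{f>t})-\mu(\set{f_\eps>t})\bigr]\,dt$; since $\set{f_\eps>t}$ is the $\eps K$-erosion of $\set{f>t}$, erosion/dilation duality identifies the inner integrand with $\mu\bigl((\set{f\le t}+\eps K)\setminus\set{f\le t}\bigr)$, whose $\tfrac1\eps$-multiple tends to $\mu^+_{\norm{\cdot}}(\set{f\le t})=\mu^-_{\norm{\cdot}}(\set{f>t})$, while $\tfrac1\eps(f-f_\eps)\to\norm{\nabla f}^*$ pointwise; Fatou's lemma then gives the displayed inequality (the only subtlety being the orientation convention for $K$ versus $-K$ when $\norm{\cdot}$ is not symmetric). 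Finally, for $t>0$ we have $\mu(\set{f>t})\le\mu(\set{f\ge t})<\infty$, so $(1)$ applies to $C=\set{f>t}$ and gives $\mu^-_{\norm{\cdot}}(\set{f>t})\ge I(\mu(\set{f>t}))$; plugging this in, and using $\mu(\set{f>t})=\mu(\set{f\ge t})$ for all but countably many $t$, we obtain $\int\norm{\nabla f}^*\,d\mu\ge\int_0^\infty I(\mu(\set{f\ge t}))\,dt=\norm{f}_{L^{\Phi,1}(\mu)}$, which is $(2)$. The main obstacle, then, is establishing the coarea inequality for an \emph{arbitrary} Borel measure $\mu$ and a possibly non-symmetric $\norm{\cdot}$, where no regularity of $\mu$ or of level sets is available and the erosion argument must be handled purely measure-theoretically; the closure and continuity technicalities entering $(3)\Rightarrow(1)$ are comparatively minor.
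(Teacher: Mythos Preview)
Your cyclic scheme $(2)\Rightarrow(3)\Rightarrow(1)\Rightarrow(2)$ is exactly the paper's approach: the paper likewise notes $\norm{f}_{L^{\Phi,1}(\mu)} \geq \norm{f}_{L^{\Phi,\infty}(\mu)}$ for $(2)\Rightarrow(3)$, tests against a Lipschitz bump for $(3)\Rightarrow(1)$, and for $(1)\Rightarrow(2)$ simply invokes the Bobkov--Houdr\'e generalized co-area inequality rather than attempting the erosion argument you sketch. One point worth noting: your test function $g_\eps = \min(d_A/\eps,1)$ built from the distance to $A = \Real^n \setminus C$ is the right choice for recovering the \emph{inner} boundary $\mu^-(C) = \mu^+(A)$, whereas the paper's stated choice $f_\eps = \max(1 - d_C/\eps,0)$ has gradient supported in $(C + \eps K)\setminus \overline{C}$ and so, as written, would produce $\mu^+(C)$ instead --- your version is the correct one here.
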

\begin{proof}[Sketch of Proof]
Clearly $\norm{f}_{L^{\Phi,1}(\mu)} \geq \norm{f}_{L^{\Phi,\infty}(\mu)}$, so statement (2) implies (3). To see that statement (3) implies (1), let $C \subset \Real^n$ denote a Borel set with $\mu(C) < \infty$ and $\mu^{-}_{\norm{\cdot}}(C) < \infty$ (otherwise there is nothing to prove). By applying (3) to functions of the form $f_\eps(x) = \max(1 - \inf_{y \in C} \norm{x-y} / \eps,0)$ and letting $\eps \rightarrow 0$, one recovers (1) (see \cite[Section 3]{BobkovHoudreMemoirs} for more details). Finally, (1) implies (2) by the generalized co-area inequality of Bobkov--Houdr\'e \cite{BobkovHoudreMemoirs}. Indeed:
\begin{eqnarray*}
& & \int \norm{\nabla f}^* d\mu   \geq \int \norm{\nabla \abs{f}}^* d\mu   \geq \int_0^\infty \mu^+_{\norm{\cdot}}(\set{ \abs{f} < t}) dt \\
&=& \int_0^\infty \mu^-_{\norm{\cdot}}(\set{ \abs{f} \geq t}) dt \geq  \int_0^\infty I(\mu(\set{ \abs{f} \geq t})) = \norm{f}_{L^{\Phi,1}(\mu)} ~.
\end{eqnarray*}
\end{proof}

Using Proposition \ref{prop:func-equiv}, we may clearly reformulate the isoperimetric inequalities derived in this work in functional Sobolev-type form. To avoid extraneous generality, we will only write this out explicitly in the homogeneous case. 

\begin{cor} \label{cor:func}
Let $w : \Real^n \setminus \set{0} \rightarrow \Real_+$ denote a locally-integrable $p$-homogeneous Borel density with $p \in (-1/n,0)$, and set $\mu = w(x) dx$. Let $\norm{\cdot}$ denote a norm on $\Real^n$ and set:
\[
 \frac{1}{q} = \frac{1}{p} + n ~,~ \alpha := \frac{1}{1-q} ~,~  C_1 := -\frac{1}{q}  \mu(\set{\norm{x} \geq 1})^q ~,~ C_2 := C_1^{-\alpha} ~.
 \]
 Then:
\begin{enumerate}
\item
For any locally Lipschitz function $f : \Real^n \rightarrow \Real$ with $f(0) = 0$ we have:
\[
\int \norm{\nabla f}^* d\mu \geq C_1 \norm{f}_{L^{\alpha,1}(\mu)} ~.
\]
\item
For any locally Lipschitz function $f : \Real^n \rightarrow \Real$ with $f(0) = 0$ and $\beta \in (0,\alpha)$, we have:
\[
\int \norm{\nabla f}^* d\mu \geq C_1\brac{\frac{\alpha-\beta}{\alpha}}^{1/\beta} \norm{f}_{L^{\beta}(\mu)} ~.
\]
\item
For any essentially bounded locally Lipschitz function $f : \Real^n \rightarrow \Real$ with $f(0) = 0$, we have:
\[
\norm{f}_{L^1(\mu)} \leq C_2  (\int \norm{\nabla f}^* d\mu)^{\alpha}  \norm{f}_{L^\infty(\mu)}^{1-\alpha} ~.
\]
\end{enumerate}
\end{cor}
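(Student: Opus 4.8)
The plan is to deduce all three statements from the sharp isoperimetric inequality for $(\Real^n,\norm{\cdot},\mu)$ together with its functional reformulation in Proposition~\ref{prop:func-equiv}. Since $w$ is locally-integrable and $p$-homogeneous with $p\in(-1/n,0)$, Lemma~\ref{lem:homogeneous} shows that $\mu=w(x)dx$ is $\sigma$-finite and $q$-homogeneous with $\frac1q=\frac1p+n<0$, so Corollary~\ref{cor:hom-iso} applies with the open unit ball $K=\set{\norm{x}<1}$ and gives
\[
\mu^{-}_{\norm{\cdot}}(C)\;\geq\;-\tfrac1q\,\mu(\set{\norm{x}\geq 1})^{q}\,\mu(C)^{1-q}\;=\;C_1\,\mu(C)^{1/\alpha}
\]
for every Borel set $C$ with $\mu(C)<\infty$, where I used $\mu(\Real^n\setminus K)=\mu(\set{\norm{x}\geq 1})$ and $1-q=1/\alpha$. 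Thus the hypothesis of Proposition~\ref{prop:func-equiv} holds with $I(v)=C_1 v^{1/\alpha}$, $\Phi=I^{-1}$ and $\Phi(v)=C_1^{-\alpha}v^{\alpha}=C_2 v^{\alpha}$.

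First I would record that the hypothesis imposed on the test functions in Proposition~\ref{prop:func-equiv} holds automatically in our setting: if $f$ is locally Lipschitz with $f(0)=0$, then $\abs{f(x)}\leq L\norm{x}$ on the unit ball, so $\set{\abs{f}\geq t}\cap\set{\norm{x}<1}\subseteq\set{\norm{x}\geq t/L}$; since $\mu$ is locally finite away from the origin and integrable at infinity, $\mu(\set{\abs{f}\geq t})<\infty$ for every $t>0$. Statement~(1) is then immediate from Proposition~\ref{prop:func-equiv}~$(1)\Rightarrow(2)$, since $\int\norm{\nabla f}^{*}d\mu\geq\norm{f}_{L^{\Phi,1}(\mu)}=C_1\int_0^\infty\mu(\set{\abs{f}\geq t})^{1/\alpha}\,dt=C_1\norm{f}_{L^{\alpha,1}(\mu)}$.

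For statement~(3), write $m(t):=\mu(\set{\abs{f}\geq t})$; since $f$ is essentially bounded, $m\equiv 0$ on $(M,\infty)$ with $M:=\norm{f}_{L^\infty(\mu)}$. Because $1/\alpha\geq 1$, the power-mean (Jensen) inequality on $[0,M]$ with normalised Lebesgue measure yields
\[
\norm{f}_{L^1(\mu)}=\int_0^{M}m(t)\,dt\;\leq\;M^{1-\alpha}\brac{\int_0^{M}m(t)^{1/\alpha}\,dt}^{\alpha}\;\leq\;M^{1-\alpha}\brac{\int_0^{\infty}m(t)^{1/\alpha}\,dt}^{\alpha},
\]
and the last factor equals $\norm{f}_{L^{\alpha,1}(\mu)}^{\alpha}\leq\brac{C_1^{-1}\int\norm{\nabla f}^{*}d\mu}^{\alpha}$ by statement~(1); since $C_2=C_1^{-\alpha}$, this is exactly statement~(3). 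For statement~(2) I would again work with $m$: a layer-cake interchange turns $\norm{f}_{L^\beta(\mu)}^{\beta}=\beta\int_0^\infty t^{\beta-1}m(t)\,dt$ into $\alpha\int_0^\infty v^{\alpha-1}\tau(v)^{\beta}\,dv$, where $\tau$ is the distribution function of $m^{1/\alpha}$, so that $\int_0^\infty\tau=\norm{f}_{L^{\alpha,1}(\mu)}=:N$; one then bounds this expression by $\tfrac{\alpha}{\alpha-\beta}N^{\beta}$ using monotonicity of $\tau$ and, crucially, the strict inequality $\beta<\alpha$, and finally replaces $N$ by $C_1^{-1}\int\norm{\nabla f}^{*}d\mu$ via statement~(1), producing the constant $C_1\brac{\tfrac{\alpha-\beta}{\alpha}}^{1/\beta}$.

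I expect the genuine difficulty to lie in statement~(2). Since $q<0$ the density $w$ is non-integrable at the origin, so $m(t)$ may tend to $+\infty$ as $t\to 0^{+}$, and the one-dimensional Hardy-type estimate above has to be arranged so that this boundary behaviour does not spoil the bound; in practice one proves the inequality for $f$ for which the right-hand side is finite and interprets the functional $\norm{\cdot}_{L^\beta(\mu)}$ with the convention that $+\infty$ is allowed. Statements~(1) and~(3), by contrast, only involve $\int_0^\infty m^{1/\alpha}$ and $\int_0^{M}m$ over a bounded range, both of which are automatically finite whenever the left-hand side is, so no such subtlety arises there.
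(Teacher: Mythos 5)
Your treatment of parts (1) and (3) is correct and essentially identical to the paper's: part (1) is Proposition \ref{prop:func-equiv} applied to the isoperimetric inequality of Corollary \ref{cor:hom-iso} with $I(v)=C_1 v^{1/\alpha}$ (together with the observation that $f(0)=0$ and local Lipschitzness force $\mu(\set{\abs{f}\geq t})<\infty$), and part (3) is the H\"older estimate $\int_0^M m\,dt\leq M^{1-\alpha}\brac{\int_0^M m^{1/\alpha}\,dt}^{\alpha}$ for $m(t)=\mu(\set{\abs{f}\geq t})$ on $[0,\norm{f}_{L^\infty}]$, exactly as in the paper.

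Part (2) is where your argument breaks down, precisely at the step you left vague. The inequality you need, namely $\alpha\int_0^\infty v^{\alpha-1}\tau(v)^\beta\,dv\leq\frac{\alpha}{\alpha-\beta}\brac{\int_0^\infty\tau}^{\beta}$ for non-increasing $\tau$, is false: take $\tau=\ell\,\ind{(0,N/\ell)}$, so $\int\tau=N$ while the left-hand side equals $N^{\alpha}\ell^{\beta-\alpha}\to\infty$ as $\ell\to0^+$ (recall $\beta<\alpha$). Monotonicity only gives $\tau(v)\leq N/v$, and $v^{\alpha-1}(N/v)^{\beta}$ is not integrable at infinity since $\alpha-\beta>0$. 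Moreover the obstruction is not an artifact of your method: the inequality asserted in part (2) is false as stated. Under $f\mapsto f(\cdot/T)$ the $q$-homogeneity of $\mu$ gives $\int\norm{\nabla f(\cdot/T)}^*d\mu=T^{\frac1q-1}\int\norm{\nabla f}^*d\mu$ and $\norm{f(\cdot/T)}_{L^\beta(\mu)}=T^{\frac{1}{q\beta}}\norm{f}_{L^\beta(\mu)}$; since $\frac1q-1=\frac{1}{q\alpha}$, the ratio of the two sides is multiplied by $T^{\frac1q(\frac1\alpha-\frac1\beta)}$, a strictly positive power of $T$ when $\beta<\alpha$, so the inequality fails as $T\to0^+$. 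Concretely, for $n=1$, $w(x)=\abs{x}^{-2}$ (so $p=-\tfrac12$, $q=-1$, $\alpha=\tfrac12$, $C_1=\tfrac12$), $\beta=\tfrac14$ and $f_T(x)=\min(\max(\abs{x}/T-1,0),1)$, one computes $\int\abs{f_T'}\,d\mu=T^{-2}$ while $\norm{f_T}_{L^{1/4}(\mu)}\geq T^{-4}$, so the claimed bound $T^{-2}\geq\frac{1}{32}T^{-4}$ fails for small $T$. (Part (1), by contrast, is scale-invariant and holds with equality for these $f_T$.)

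For completeness: the paper's own proof of part (2) has the same defect. It splits the layer-cake integral at $s=\norm{f}_{L^{\alpha,\infty}}$ and bounds $\int_0^s\beta t^{\beta-1}\mu(\set{\abs{f}\geq t})\,dt$ by $s^{\beta}$, which presupposes $\mu(\set{\abs{f}\geq t})\leq1$; this is valid in Bobkov's setting of $q$-concave \emph{probability} measures, from which the argument is adapted, but not here, where $\mu(\set{\abs{f}\geq t})\to\infty$ as $t\to0^+$. So you should not try to repair your argument for (2); the statement itself needs an additional hypothesis (for instance $\mu(\set{f\neq0})\leq1$, or some other normalization breaking the scaling $f\mapsto f(\cdot/T)$) before any proof can go through.
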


\begin{rem}
When $\alpha \geq 1$, it is easy to check that:
\[
\norm{f}_{L^{\alpha,1}(\mu)} \geq \norm{f}_{L^\alpha(\mu)} = \brac{\int_0^\infty \alpha t^{\alpha-1} \mu(\set{ \abs{f} \geq t}) dt}^{1/\alpha} \geq \norm{f}_{L^{\alpha,\infty}(\mu)} ~,
\]
and consequently Proposition \ref{prop:func-equiv} applied with $I(x) = c x^{1/\alpha}$ yields the well known equivalence between an isoperimetric inequality of the form $\mu^-_{\norm{\cdot}}(A) \geq c \mu(A)^{1/\alpha}$ or $\mu^+_{\norm{\cdot}}(A) \geq c \mu(A)^{1/\alpha}$ (the difference is inconsequential) and a Sobolev inequality $\int \norm{\nabla f}^* d\mu \geq c\norm{f}_{L^{\alpha}(\mu)}$, as first noted by  Federer-Fleming \cite{FedererFleming} and Maz'ya \cite{MazyaSobolevImbedding} in connection to the optimal constant in Gagliardo's inequality on $\Real^n$.
However, when $\alpha \in (0,1)$, which is the relevant case for us here, we have:\[
\norm{f}_{L^\alpha(\mu)}  \geq \norm{f}_{L^{\alpha,1}(\mu)} ~,
\]
and so we cannot replace the $L^{\alpha,1}$ quasi-norm by the stronger $L^\alpha$ one in Proposition \ref{prop:func-equiv} or Corollary \ref{cor:func}. Variants of cases (2) and (3) above were also obtained by Bobkov \cite[Lemma 8.3]{BobkovConvexHeavyTailedMeasures} with the same $\alpha = \frac{1}{1-q}$ for the class of $q$-concave \emph{probability} measures, $q \in (-\infty,1/n]$, under the assumption that the function $f$ has zero median. In our results, this assumption on $f$ is replaced by the requirement that $f(0) = 0$, since our (\emph{non-probability}) measures necessarily explode at the origin. However, it is a very difficult problem to obtain good constants $C_1,C_2$ in Bobkov's isoperimetric problem (see \cite[Corollary 8.2]{BobkovConvexHeavyTailedMeasures}), in contrast to our sharp constants $C_1,C_2$ above. 
\end{rem}

\begin{proof}[Proof of Corollary \ref{cor:func}]
Note that $\mu$ is $q$-homogeneous with $q < 0$, and since its density is locally-integrable outside the origin, it follows that the complement of any neighborhood of the origin has finite $\mu$-measure. Consequently, $\mu(\set{\abs{f} \geq t}) < \infty$ for any continuous function $f$ with $f(0) = 0$ and $t > 0$, and so we may invoke the equivalent versions given by Proposition \ref{prop:func-equiv} to the isoperimetric inequality guaranteed by Corollary \ref{cor:hom-iso}:
\[
\mu^{-}_{\norm{\cdot}}(C) \geq C_1 \mu(C)^{\frac{1}{\alpha}} \;\;\; \forall \text{ Borel set $C \subset \Real^n$ with } \mu(C) < \infty ~.
\]
This yields the first statement. The second follows by passing to the equivalent Sobolev inequality with the $L^{\alpha,\infty}(\mu)$ norm, and estimating:
\[
\norm{f}_{L^\beta(\mu)}^\beta = \int_{0}^\infty \beta t^{\beta-1} \mu(\set{\abs{f} \geq t}) dt \leq s^\beta + \int_{s}^\infty \beta t^{\beta-1-\alpha} dt \cdot \norm{f}^\alpha_{L^{\alpha,\infty}(\mu)} ~,
\]
using the optimal $s = \norm{f}_{L^{\alpha,\infty}}$. 
The third statement is a Nash-type interpolation inequality which follows from the first statement and H\"{o}lder's inequality:
\[
\norm{f}_{L^1(\mu)} \leq (\norm{f}_{L^{\alpha,1}(\mu)})^{\alpha} (\norm{f}_{L^\infty(\mu)})^{1-\alpha} ~.
\]
\end{proof}
 
As usual, by applying the above inequalities to $f = g^r$ and invoking H\"{o}lder's inequality (see e.g. \cite[Section 2]{EMilman-RoleOfConvexity}), we can also pass to $L^r$-Sobolev inequalities with $r > 1$; we leave the precise formulation to the interested reader.

\bibliographystyle{plain}
\bibliography{../ConvexBib}

\end{document}